\newcommand{\res}{\mathop{\hbox{\vrule height 7pt width .5pt depth 0pt
\vrule height .5pt width 6pt depth 0pt}}\nolimits}
\numberwithin{equation}{section} 
\definecolor{ddorange}{rgb}{1,0.5,0}
\definecolor{ddcyan}{rgb}{0,0.2,1.0}
\newcommand{\EEE}{\color{black}}
\def\th@plain{%
  \thm@notefont{}
  \itshape 
}
\def\th@definition{%
  \thm@notefont{}
  \normalfont 
}
\newtheorem{thm}{Theorem}[section]
\newtheorem{prop}[thm]{Proposition}
\newtheorem{lem}[thm]{Lemma}
\theoremstyle{definition}
\newtheorem{defn}[thm]{Definition}
\newtheorem{oss}[thm]{Remark}
\newcommand{\Z}{\mathbb{Z}}
\newcommand{\N}{\mathbb{N}}
\newcommand{\R}{\mathbb{R}}
\renewcommand{\epsilon}{\varepsilon}
\DeclareMathOperator{\dist}{dist}
\newcommand{\sm}{\setminus}
\newcommand{\wt}{\widetilde}
\newcommand{\hd}{\mathcal{H}^{d-1}}
\newcommand{\Rd}{\mathbb{R}^d}
\newcommand{\Ld}{\mathcal{L}^d}
\newcommand{\dx}{\,\mathrm{d}x}
\newcommand{\NI}{^{\mathrm{NI}}}
\newcommand{\NIM}{^{\mathrm{NI},M}}
\newcommand{\etl}{_\varepsilon^{\beta, l}}
\newcommand{\dod}{{\partial_D \Omega}}
\newcommand{\don}{{\partial_N \Omega}}
\newcommand{\dom}{{\partial \Omega}}
\newcommand{\Mdd}{{\R^{d\times d}_{sym}}}
\newcommand{\ltm}{^{\lambda, \theta}}
\newcommand{\ltms}{_{\lambda, \theta}}
\newcommand{\Dir}{^{\mathrm{Dir}}}
\newcommand{\wstar}{\stackrel{*}\rightharpoonup}
\newcommand{\Addresses}{{
  \bigskip
  \footnotesize

(V.~Crismale)  \textsc{CMAP, \'Ecole Polytechnique, CNRS, 91128 Palaiseau Cedex, France.}\par\nopagebreak
  \textit{E-mail address}, V.~Crismale: \texttt{vito.crismale@polytechnique.edu}

\medskip

(G.~Scilla) \textsc{Dipartimento di Matematica ed Applicazioni ``R. Caccioppoli'', Universit\`{a} di Napoli Federico~II, Via Cintia Monte Sant'Angelo, 80126 Napoli, Italy.}\par\nopagebreak
  \textit{E-mail address}, G.~Scilla: \texttt{giovanni.scilla@unina.it}

\medskip

(F.~Solombrino) \textsc{Dipartimento di Matematica ed Applicazioni ``R. Caccioppoli'', Universit\`{a} di Napoli Federico~II, Via Cintia Monte Sant'Angelo, 80126 Napoli, Italy.}\par\nopagebreak
  \textit{E-mail address}, F.~Solombrino: \texttt{francesco.solombrino@unina.it}
}}
\def\e{\epsilon}
\setlist[description]{nosep}
\author{V. Crismale, G. Scilla and F. Solombrino}
\title{A derivation of Griffith functionals from discrete finite-difference models}
\date{}
\begin{document}

\begin{abstract}
We analyze a finite-difference approximation of a functional of Ambrosio-Tortorelli type in brittle fracture, in the discrete-to-continuum limit. In a suitable regime between the competing scales, namely if the discretization step $\delta$ is smaller than the ellipticity parameter $\epsilon$, we show the $\Gamma$\hbox{-}convergence of the model to the Griffith functional, containing only a term enforcing Dirichlet boundary conditions and no $L^p$ fidelity term. Restricting to two dimensions, we also address the case in which a (linearized) constraint of non-interpenetration of matter is added in the limit functional, in the spirit of a recent work by Chambolle, Conti and Francfort. 
\end{abstract}

\keywords{discrete approximations, Griffith functional, finite difference methods, brittle fracture, non-interpenetration}
\subjclass[2000]{49M25; 49J45; 74R10; 65M06}

\maketitle

\setcounter{tocdepth}{1}  
\tableofcontents

\section{Introduction}


In this paper we provide a variational approximation by discrete finite-difference energies of functionals of the form
\begin{equation}
\lambda\int_{\Omega\backslash K} |\mathcal{E}u(x)|^2\,\mathrm{d}x + \mu\int_{\Omega\backslash K} |{\rm div}\,u(x)|^2\,\mathrm{d}x  + \mathcal{H}^{d-1}(K), 
\label{Griff1}
\end{equation}
where $\Omega$ is a bounded subset of $\R^d$, $K\subseteq\Omega$ is closed, $u\in C^1(\Omega\backslash K;\R^d)$, $\mathcal{E}u$ denotes the symmetric part of the gradient of $u$, ${\rm div}\,u$ is the divergence of $u$ and $\mathcal{H}^{d-1}$ is the $(d-1)$-dimensional Hausdorff measure.  Functionals as in \eqref{Griff1} are widely used in the variational modeling of fracture mechanics for linearly elastic materials, in the framework of Griffith's theory of brittle fracture (see, e.g. \cite{Griffith}). Here $\Omega$ stands for the reference configuration and $u$ represents the displacement field of the body. The total energy \eqref{Griff1} is composed by a bulk energy in $\Omega\backslash K$, where the material is supposed to be linearly elastic, and a surface term accounting for the energy necessary to produce the fracture, proportional to the area of the crack surface $K$. A rigorous weak formulation of the problem \eqref{Griff1}, which is usually complemented by the assignment of boundary Dirichlet datum, has been provided only in very recent years \cite{DM2013,CC18Comp}. In the appropriate functional setting, $u$ is a (vector-valued) generalized special function of bounded deformation, for which the symmetrized gradient $\mathcal{E}u$ and the divergence ${\rm div}\,u$ are defined almost everywhere in an approximate sense (see~\cite{DM2013}), and the set $K$ is replaced by the $(d-1)$-rectifiable set $J_u$, the jump set of $u$.

However, the numerical treatment of functionals \eqref{Griff1} presents relevant difficulties mainly connected to the presence of the surface term $\mathcal{H}^{d-1}(J_u)$. Such difficulties already appear in the case of antiplane shear (see, e.g., \cite{BFM}) where the energy \eqref{Griff1} reduces to the
Mumford-Shah-type functional
\begin{equation}
\int_\Omega |\nabla u|^2\, \mathrm{d}x + \mathcal{H}^{d-1}(J_u), 
\label{MS}
\end{equation}
for a scalar-valued displacement $u\in SBV(\Omega)$, the space of special functions of bounded variation.
In view of the aforementioned numerical issues, a particular attention has been devoted over the last three decades to provide suitable discrete approximations, by means of both finite-difference and finite-elements, of the functional \eqref{MS}. 

A first approach, based on earlier models in Image Segmentation, has been proposed by Chambolle~\cite{Chambolle1} in dimension $d=1,2$; there, the discrete model depends on finite differences through a truncated quadratic potential. In the case $d=2$, the surface term of the variational limit is described by an anisotropic function $\varphi(\nu_u)$ of the normal $\nu_u$ to $J_u$ depending on the geometry of the underlying lattice. As a matter of fact, this anisotropy can be avoided by considering alternate finite-elements of different local approximations of the Mumford-Shah functional, as showed, still in dimension two, by Chambolle and Dal Maso~\cite{ChambDM}. We refer to \cite{BelCos94} (cf.\ also \cite{Bou99}) and to \cite{Gob98} for some other approximations using finite-elements and continuous finite-difference approximations of \eqref{MS}, respectively. 

A different strategy consists in replacing the Mumford-Shah functional by an elliptic approximation (with parameter $\epsilon>0$) in the spirit of Ambrosio-Tortorelli~\cite{AT1,AT2}, and then by discretizing these elliptic functionals by means of either finite-difference or finite-elements with mesh-size $\delta$, independent of $\epsilon$. For a suitable fine mesh, with size $\delta=\delta(\epsilon)$ small enough, these numerical approximations $\Gamma$-converge, as $\epsilon\to0$, to the Mumford-Shah functional. 

This suggests that a remarkable problem to be addressed is the so called ``quantitative analysis'': i.e., the study of the limit behavior of these approximations as $\delta$ and $\epsilon$ simultaneously tend to 0.
Following on the footsteps of the approximation of the Modica-Mortola functional proposed by Braides and Yip~\cite{BY}, this analysis has been recently developed by Bach, Braides and Zeppieri in \cite{BBZ} for \eqref{MS}. They characterize the limit behavior of the energies
\begin{equation*}
\sum_{\substack{\alpha,\beta\in \Omega\cap\delta\Z^d\\|\alpha-\beta|=\delta}}\delta^d (v(\alpha))^2\left|\frac{u(\alpha)-u(\beta)}{\delta}\right|^2 + \sum_{\alpha\in \Omega\cap\delta\Z^d}\delta^d\frac{(v(\alpha)-1)^2}{\epsilon} + \frac{1}{2}\sum_{\substack{\alpha,\beta\in \Omega\cap\delta\Z^d\\|\alpha-\beta|=\delta}}\epsilon\delta^d \left|\frac{v(\alpha)-v(\beta)}{\delta}\right|^2,
\end{equation*}
showing the variational convergence to the 
functional \eqref{MS} in the regime $\delta\!<\!<\epsilon$. 
Other scalings of the parameters are also studied: in the regime $\delta\sim\epsilon$, the surface energy is described by a function $\varphi(\nu_u)$ solution to a discrete optimal-profile problem, while if $\delta\!>\!>\epsilon$, the limit energy is the Dirichlet functional.
Recently, approximations of \eqref{MS} (thus without anisotropy in the limit) have been obtained even when $\delta \sim \epsilon$, by employing discretizations on random lattices. In particular, \cite{BCR} analyzes the random version of the discrete energies in \cite{BBZ}, basing on \cite{Ruf} (cf.\ also \cite{CDSZ19}).  

Coming back to the problem of providing discrete approximations of the Griffith functional, we mention the finite-elements approximation in \cite{Neg03} and focus on the discrete-to-continuum analysis performed by Alicando, Focardi and Gelli~\cite{AFG}. They considered, in the spirit of  \cite{Chambolle1} and in the planar setting $d=2$, discrete energies of the form
\begin{equation}
\sum_{\xi\in\Z^d}\rho(\xi)\sum_{\alpha\in R_\delta^\xi}\delta^{d-1}f\left(\delta(|\langle D_\delta^\xi u(\alpha),\xi\rangle|^2+\theta|{\rm div}_\delta^\xi u(\alpha)|^2)\right),
\label{AFGenergies}
\end{equation}
defined on a portion $R_\delta^\xi$ of $\Omega\cap\delta\Z^d$, where $\rho$ is a positive kernel, $\theta$ is a positive constant, $f(t):=\min\{t,1\}$, $D_\delta^\xi u(x)$ denotes the difference quotient $\frac{1}{\delta}(u(x+\delta\xi)-u(x))$ and ${\rm div}_\delta^\xi \, u$ is a suitable discretization of the divergence which takes into account three-point-interactions in the directions $\xi$ and $\xi^\perp$ (the vector orthogonal to $\xi$). In order to obtain compactness of sequences of competitors with equibounded energy, they require that $\rho(\xi)>0$ for $\xi\in\{\pm e_1, \pm e_2, \pm (e_1\pm e_2)\}$, which amounts to consider nearest-neighbors (NN) and next-to-nearest neighbors (NNN) interactions in the energies. Furthermore, an $L^\infty$ bound has to be imposed, which is quite unnatural in Fracture Mechanics. Differently from \cite{BBZ}, the characterization of the limit energy cannot be achieved with the reduction to a 1-dimensional case by means of slicing techniques (see, e.g., \cite{BG, Chambolle1999, Gob98}), due to the presence of the divergence term. Hence, a different strategy has to be used, involving the construction of suitable interpolants (see \cite[Proposition~4.1]{AFG}). As it happened in \cite{Chambolle1}, the surface term in the limit energy is still reminiscent of the underlying lattice, and only a continuous version of \eqref{AFGenergies} allows to obtain 
$\mathcal{H}^{d-1}(J_u)$ as surface energy. Furthermore, a possible extension of the model to dimension $d=3$, still involving NN and NNN interactions is proposed, but no compactness result is provided.\\
\\
\emph{Our results:} This leads us to the motivation of our paper, which complements the results of both \cite{BBZ} and \cite{AFG}.
On the one hand, 
we provide a discrete Ambrosio-Tortorelli approximation to the Griffith functional both in dimension $d=2$ and $d=3$, of the form 
\begin{equation}
\begin{split}
&\frac{1}{2}\sum_{\xi\in S_d}\sum_{\alpha\in R_\delta^\xi(\Omega)}\delta^{d-2}(v(\alpha))^2\left|D_{\delta,\xi}u(\alpha)\right|^2 + \frac{1}{2^d}\sum_{\alpha\in R_\delta^{{\rm div}}(\Omega)}\delta^{d-2}(v(\alpha))^2\left|{\rm Div}_{\delta}u(\alpha)\right|^2\\
&+\frac12\sum_{\alpha\in \Omega\cap\delta\Z^d}\delta^d\Bigg(\frac{1}{\epsilon}(v(\alpha)-1)^2 + \epsilon\sum_{\substack{k=1\\\alpha+\delta e_k\in \Omega\cap\delta\Z^d}}^d\left(\frac{v(\alpha+\delta e_k)-v(\alpha)}{\delta}\right)^2\Bigg)\,,
\end{split}
\label{model}
\end{equation}
where $S_d$ is a set of lattice directions (depending on the dimension $d$), $D_{\delta,\xi}u$ and ${\rm Div}_{\delta}\,u$ are suitable discretizations of the symmetrized gradient and of the divergence of the vector-valued $u$, and the latter term is a discrete Modica-Mortola functional. Notice that ${\rm Div}_{\delta}\,u$ takes into account $(d+1)$-point-interactions on a complete set of orthogonal directions (see \eqref{discrediv2}). Then we prove, as main result (Theorem~\ref{teo:main}), that \eqref{model} $\Gamma$-converges as $\epsilon\to0$ to the Griffith's functional under the assumption that $\delta\!<\!<\epsilon$. 

On the other hand, we conclude the analysis started in \cite{AFG} for the finite-difference approximation of \eqref{Griff1} in dimension $d=3$, although with a different approach, by both  rigorously proving a compactness result under more general assumptions, and recovering an isotropic surface energy in the limit.
We also stress the fact that the extension of the two-dimensional model to the case $d=3$ is not just a minor modification but requires the introduction of additional interactions in the elastic term of the energies by specifying the set of directions $S_3$ (see \eqref{setdirec}); namely, we need to take into account also next-to-next-nearest neighbors (NNNN) interactions, corresponding to lattice vectors $\xi\in\{\pm(e_1\pm e_2\pm e_3)\}$.

The aforementioned compactness result, which is the content of Proposition~\ref{prop:compactness}, determines the functional space domain of the limit: we benefit from the recent results \cite{CC18Comp, CriFri19} and prove that sequences $(u_\epsilon,v_\epsilon)$ with equibounded energies \eqref{model} converge (up to subsequences) to a limit pair $(u,v)\in GSBD^2_\infty(\Omega)\times\{1\}$. We refer the reader to Section~\ref{sec:prelim} for a precise definition of this function space, where also the value $\infty$ is allowed.
We underline that our compactness result, valid under the weaker assumption that $\frac{\delta}{\epsilon}$ be bounded, cannot be obtained in our view \EEE through any slicing procedure (as it happened, on the contrary, in \cite{BBZ}) and also refines the compactness lemma \cite[Lemma~1]{Chambolle1999} to deal with the vector-valued case. \EEE Indeed, while in the scalar-valued case controlling the total variation along $d$ independent slices of $u_\e$ is enough to  provide $BV$-compactness, no analogue procedure is at the moment known  in $GSBD$ (whose definition \cite[Definition 4.1]{DM2013} in principle requires a uniform control of the symmetrized slices on a dense set of directions in the unit sphere, cf.\ also \cite[Remark~4.15]{DM2013}). Such issue prevents us to get a uniform bound in $GSBD$ from a control on the slices corresponding to the  directions of the lattice vectors, that could be easily obtained from the discrete functional as in \cite{BBZ, Chambolle1999}. We notice that the situation is different with respect to the $BD$ case, where it is enough to control the slices on a finite set of directions, see \cite[Proposition~3.2]{ACDM}. \EEE 

In fact, we are able to prove that a continuous Ambrosio-Tortorelli functional, defined on the standard piecewise affine interpolations $\bar u_\epsilon$ of the $u_\epsilon$ and on  suitable piecewise constant interpolations $\tilde v_{min, \epsilon}$ of the $v_\epsilon$ (different than the standard ones),  bounds from below the discrete energies \eqref{model}. To this aim, taking the additional (NNNN) interactions is crucial in dimension $d=3$ . In addition, we do not need to add any $L^p$ fidelity term to the discrete energies, since compactness in $GSBD^2_\infty$ does not require such limitations and is also able to handle the fact that $u$ may take value $\infty$.

The proof of the $\Gamma$-liminf inequality is subdivided into two steps. The lower semicontinuity of the elastic part of the limit energies (see Lemma~\ref{le:technical1} and Proposition~\ref{prop:lowerbound}) can be obtained by combining slicing arguments on suitable interpolations of $u_\epsilon$ and $v_\epsilon$ with a splitting into sublattices of $\delta \Z^d$, which are frequently used techniques to work 
with discrete energies with both short and long-range interactions (see, e.g., \cite{AFG,BG}). 
 It must be noticed, at this point, that both the first two summands in \eqref{model} give a contribution to the second term in \eqref{Griff1}. As, within the proof technique described above, both are assumed to be nonnegative, the constants $\mu$ and $\lambda$ appearing there are related by $2\mu=\lambda+2\theta$ with $\theta \geq 0$, as it also happened, for instance, in the statement of \cite[Theorem 7.1]{AFG}. Hence, our main result is stated in terms of the two indendent parameters $\lambda$ and $\theta$ and is valid for materials whose Poisson ratio (due to the inequality $2\mu \geq \lambda$) does not exceed the value $\frac13$. \EEE

The lower bound for the surface term, instead, requires a more refined blow-up procedure (Proposition~\ref{lowbound}) and this is the very first technical point where we need to assume that $\frac{\delta}{\epsilon}\to0$, in order to recover the optimal constant. Indeed a slicing argument under the weaker assumption that $\delta/\epsilon$ be bounded would provide a lower bound with a wrong constant.  We remark that, also in this proof, similar arguments as in Proposition~\ref{prop:compactness} have to be used, in order to get compactness of a rescaled version of the $u_\epsilon$. Moreover,
additional care is needed in order to deal with the fact that our limit displacements may assume the value infinity (see e.g.\ Step~2 in Proposition~\ref{prop:lowerbound}). 

The construction of a recovery sequence (Proposition~\ref{prop:upperbound}) relies on the density result for $GSBD^2$ functions \cite[Theorem~1.1]{CC}, recalled here with Theorem~\ref{thm:density}. The upper bound for the elastic term is obtained by first reducing the discrete energies to continuous ones by means of a classical translation argument (see, e.g. \cite[Proposition~4.4]{AFG}) and then by exploiting the upper estimates coming from the approximations of $\int|\langle (\mathcal{E}u)\xi,\xi\rangle|^2\,\mathrm{d}x$ and $\int({\rm div}\,u)^2\,\mathrm{d}x$ outside an infinitesimal neighborhood of the jump set of the target function $u$. The limsup inequality for the surface term is developed as in \cite[Proposition~4.2]{BBZ}, by also employing the one-dimensional solution to the Ambrosio-Tortorelli optimal profile problem.

We conclude our analysis by investigating the compatibility of our two-dimensional model with the constraint of non-interpenetration. The answer is positive under the assumptions of \cite{CCF18ARMA} but, in order to obtain the desired upper bound, we need to require the stronger scaling $\frac{\delta}{\epsilon^2}\to0$ between the parameters.

As a final remark, we mention that our results also give a partial insight on the case $\delta \sim \epsilon$. Indeed, the constructions in Sections \ref{sec:lowerbound} and \ref{sec:upperbound} can also be used to show that, whenever the ratio $\delta/\epsilon$ stays bounded, the $\Gamma$-limit of the energy \eqref{model} can be controlled from above and from below by functionals of the kind \eqref{Griff1}, with different constants appearing in the surface term. However, a precise characterization of the limit energy in this case has to face additional issues. 
The analysis performed in \cite{BBZ} for the scalar-valued case, indeed, relies indeed on two major ingredients. First of all, the limit energy is characterized as an abstract integral surface energy by means of the global method for relaxation introduced in \cite{BFMA}. This could be also done in our setting, by exploiting a recent integral representation result for energies on spaces of functions of bounded deformation \cite{CFS} (see also \cite{CFI} in the planar setting). However, a crucial step in this procedure consists in proving that a separation of bulk and
surface contributions takes place in the limit. In \cite{BBZ} this is done by means of an explicit construction which, however, is confined to $2$ dimensions and  strongly exploits  the $SBV$-setting. A more general point of view, also suitable for higher dimensions,  is for instance used in \cite[Proposition~4.11]{BCR} with the help of a weighted coarea formula. This is unfortunately also a tool which is not available when dealing with $(G)SBD$ functions. The investigation of  these issues has therefore to be deferred to further contributions. \EEE
%
\\
\\
\emph{Outline of the paper:}  The paper is organized as follows. In Section~\ref{sec:prelim} we fix the basic notation and collect some definitions and results on the function spaces we will deal with. In Section~\ref{sec:discrmodel} we introduce our discrete model and state the main results of the paper. Section~\ref{sec:compprelboun} contains the compactness result of Proposition~\ref{prop:compactness}. Section~\ref{sec:lowerbound} is devoted to the liminf inequality, proved with Proposition~\ref{lowbound}, while Section~\ref{sec:upperbound} deals with the upper inequality (Proposition~\ref{prop:upperbound}). Eventually, in Section~\ref{sec:nonimp} we analyze the compatibility of the two-dimensional model with a non-interpenetration constraint.

\section{Preliminaries}\label{sec:prelim}

\subsection{Notation}
The symbol $\langle\cdot,\cdot\rangle$ denotes the scalar product in $\mathbb{R}^d$, while $|\cdot|$ stands for the Euclidean norm in any dimension. For any $x,y\in\mathbb{R}^d$, $[x,y]$ is the segment with endpoints $x$ and $y$. The symbol $\Omega$ will always denote an open, bounded subset of $\mathbb{R}^d$. The Lebesgue measure in $\mathbb{R}^d$ and the $s$-dimensional Hausdorff measure are written as $\mathcal{L}^d$ and $\mathcal{H}^s$, respectively. We will often use the notation $|A|$ 
for the Lebesgue measure of a Borel set $A$. The symbols $\lesssim$ and $\gtrsim$ denote the boundedness modulo a constant. 

For any locally compact subset $B  \subset \R^d$ (i.e.\ any point in $B$ has a neighborhood contained in a compact subset of $B$),
the space of bounded $\R^m$-valued Radon measures on $B$ [respectively, the space of $\R^m$-valued Radon measures on $B$] is denoted by $\mathcal{M}_b(B;\R^m)$ [resp., by $\mathcal{M}(B;\R^m)$]. If $m=1$, we write $\mathcal{M}_b(B)$ for $\mathcal{M}_b(B;\R)$, $\mathcal{M}(B)$ for $\mathcal{M}(B;\R)$, and $\mathcal{M}^+_b(B)$ for the subspace of positive measures of $\mathcal{M}_b(B)$. For every $\mu \in \mathcal{M}_b(B;\R^m)$, its total variation is denoted by $|\mu|(B)$. We write $\{e_1,\dots,e_d\}$ for the canonical basis of $\R^d$.
\subsection{$GBD$, $GSBD$, and $GSBD^2_\infty$ functions}

We recall here some basic definitions and results on generalized functions with bounded deformation, as introduced in \cite{DM2013}. Throughout the paper we will use standard notations for the spaces $SBV$ and $SBD$, referring the reader to \cite{AFP} and \cite{ACDM, BCDM, Temam}, respectively, for a detailed treatment on the topics.\\

Let $\xi\in\R^d\backslash\{0\}$ and $\Pi^\xi=\{y\in\R^d:\, \langle\xi,y\rangle=0\}$. If $y\in\Pi^\xi$ and $\Omega\subset\R^d$ we set $\Omega_{\xi,y}:=\{t\in\R:\, y+t\xi\in \Omega\}$ and $\Omega_\xi:=\{y\in \Pi^\xi:\, \Omega_{\xi,y}\neq\emptyset\}$. Given $u:\Omega\to\R^d$, $d\geq2$, we define $u^{\xi,y}: \Omega_{\xi,y}\to\R$ by 
\begin{equation}
u^{\xi,y}(t):=\langle u(y+t\xi),\xi\rangle\,, 
\label{section1}
\end{equation}
while if $h: \Omega\to\R$, the symbol $h^{\xi,y}$ will denote the restriction of $h$ to the set $\Omega_{\xi,y}$; namely,
\begin{equation}
h^{\xi,y}(t):= h(y+t\xi)\,.
\label{section2}
\end{equation}

\begin{defn}
An $\mathcal L^{d}$-measurable function $u:\Omega\to \R^{d}$ belongs to $GBD(\Omega)$ if there exists a positive bounded Radon measure $\lambda_u$ such that, for all $\tau \in C^{1}(\R^{d})$ with $-\frac12 \le \tau \le \frac12$ and $0\le \tau'\le 1$, and all $\xi \in S^{d-1}$, the distributional derivative $D_\xi (\tau(\langle u,\xi\rangle))$ is a bounded Radon measure on $\Omega$ whose total variation satisfies
$$
\left|D_\xi (\tau(\langle u,\xi\rangle))\right|(B)\le \lambda_u(B)
$$
for every Borel subset $B$ of $\Omega$. 
\end{defn}

If $u\in GBD(\Omega)$ and $\xi\in\R^d\backslash\{0\}$ then, in view of \cite[Theorem~9.1, Theorem~8.1]{DM2013}, the following properties hold:
\begin{enumerate}
\item[{\rm(a)}] $\dot{u}^{\xi,y}(t)=\langle\mathcal{E}u(y+t\xi)\xi,\xi\rangle$ for a.e. $t\in \Omega_y^\xi$;\\
\item[{\rm(b)}] $J_{u^{\xi,y}}=(J_u^\xi)_y^\xi$ for $\mathcal{H}^{n-1}$-a.e. $y\in\Pi^\xi$, where
\begin{equation}
J_u^\xi:=\{x\in J_u:\, \langle u^+(x)-u^-(x),\xi\rangle\neq0\}\,;
\label{jumpset}
\end{equation}
\end{enumerate}

\begin{defn}
A function $u \in GBD(\Omega)$ belongs to the subset $GSBD(\Omega)$ of special functions of bounded deformation if in addition for every $\xi \in S^{d-1}$ and $\mathcal H^{d-1}$-a.e.\ $y \in \Pi^\xi$, the function $u^{\xi,y}$ belongs to $SBV_{\mathrm{loc}}(\Omega^\xi_y)$.
\end{defn}

By \cite[Remark 4.5]{DM2013} one has the inclusions $BD(\Omega)\subset GBD(\Omega)$ and $SBD(\Omega)\subset GSBD(\Omega)$, which are in general strict. Some relevant properties of functions with bounded deformation can be generalized to this weak setting: in particular, in \cite[Theorem 6.2 and Theorem 9.1]{DM2013} it is shown that the jump set $J_u$ of a $GBD$-function is $\mathcal H^{d-1}$-rectifiable and that $GBD$-functions have an approximate symmetric differential $\mathcal{E}u(x)$ at $\mathcal L^{d}$-a.e.\ $x\in \Omega$, respectively. The space $GSBD^2(\Omega)$ is defined through:
$$
GSBD^2 (\Omega):= \{u \in GSBD(\Omega): \mathcal{E}u \in L^2 (\Omega; \mathbb R_{\mathrm{sym}}^{d\times d})\,,\,\mathcal H^{d-1}(J_u) < +\infty\}\,.
$$

Every function in $GSBD^2(\Omega)$ is approximated by bounded $SBV$ functions with more regular jump set, as stated by the following result (\cite[Theorem~1.1]{CC}). In order to deal with the Dirichlet boundary value problem (in fact we will impose a Dirichlet boundary datum $u_0 \in H^1(\R^d;\R^d)$ on a subset $\partial_D \Omega \subset \partial \Omega$), we report a version adapted for boundary data (cf.\ \cite[Section 5]{CC}).
 For technical reasons, we suppose that  $\dom=\dod\cup \don\cup N$ with $\dod$ and $\don$ relatively open, $\dod \cap \don =\emptyset$, $\hd(N)=0$, 
$\dod \neq \emptyset$,  $\partial(\dod)=\partial(\don)$, and 
that there exist a small $\overline \delta$ and $x_0\in \Rd$ such that for every $\delta \in (0,\overline \delta)$ 
\begin{equation}\label{0807170103}
O_{\delta,x_0}(\dod) \subset \Omega\,,
\end{equation}
where $O_{\delta,x_0}(x):=x_0+(1-\delta)(x-x_0)$.

  In the following, we denote by ${\rm tr}(u)$ the trace of $u$ on $\partial \Omega$ which is well defined for functions in $GSBD^2(\Omega)$ if $\Omega$ is Lipschitz (see \cite[Section~5]{DM2013}). 

\begin{thm}
Let $\Omega\subset\mathbb{R}^d$ be a bounded open Lipschitz set, and $u\in GSBD^2(\Omega;\R^d)$. Then there exists a sequence $u_n$ such that\\
$(i)$ $u_n\in SBV^2(\Omega;\R^d)\cap L^\infty(\Omega;\R^d)$;\\
$(ii)$ each $J_{u_n}$ is closed and included in a finite union of closed connected pieces of $C^1$-hypersurfaces;\\
$(iii)$ $u_n\in W^{1,\infty}({\Omega}\backslash J_{u_n};\R^d)$, and
\begin{align}
u_n\to u \mbox{ in measure on $\Omega$},\label{convme}\\
\mathcal{E}u_n \to \mathcal{E}u \mbox{ in $L^2(\Omega;\R^{d\times d}_{sym})$,}\label{convgrad}\\
\mathcal{H}^{d-1}(J_{u_n}\triangle J_u)\to0.\label{convjump}
\end{align}
\label{thm:density}
 Moreover, if $\partial_D \Omega\subset \partial \Omega$ satisfies \eqref{0807170103} and $u_0 \in H^1(\R^d;\R^d)$, then one can ensure that each $u_n$ satisfies $u_n=u_0$ in a neighborhood $U_n \subset \Omega$ of $\partial_D \Omega$, provided that \eqref{convjump} is replaced by
\begin{equation}\label{1412190922}
\lim_{n \to \infty} \mathcal{H}^{d-1}(J_{u_n}) = \mathcal{H}^{d-1}(J_u) + \mathcal{H}^{d-1}(\lbrace {\rm tr}(u) \neq {\rm tr}(u_0)\rbrace \cap \partial_D \Omega)\,.
\end{equation}
\end{thm}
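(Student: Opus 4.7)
The plan is to prove the density theorem by a four–stage approximation: first truncate, then approximate the jump set by smooth hypersurfaces, then reflect across them to reduce to an $H^1$ problem away from a controlled surface, and finally mollify. Throughout I would fix a small parameter $\eta>0$ measuring the total error (in measure on $\Omega$, in $L^2$ on $\mathcal{E}u$, in $\hd$ on $J_u$, and in $L^\infty$-truncation level), produce an approximation with error $O(\eta)$, and recover the full statement by a diagonal procedure letting $\eta\to 0$.

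\smallskip

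\emph{Step 1 (truncation).} A $GSBD^2$ function may take the value $\infty$ on a set of positive measure, so the first task is to reduce to $u\in L^\infty(\Omega;\R^d)$. I would use a component‐wise truncation of the form $u\mapsto \varphi_M(u)$ where $\varphi_M:\R^d\to\R^d$ is smooth, equals the identity on $\{|y|\le M\}$ and is constant on $\{|y|\ge 2M\}$; this preserves $\hd(J_u)$ because $\varphi_M$ is Lipschitz and does not create new jumps, and by dominated convergence $\mathcal E(\varphi_M(u))\to\mathcal E u$ in $L^2$ as $M\to\infty$, using the chain‐rule formula for $GSBD$ combined with the fact that $\{|u|=\infty\}$ has zero density w.r.t.\ $\mathcal{E}u$ (which vanishes there).

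\smallskip

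\emph{Step 2 (smooth jump set).} By $\hd$-rectifiability of $J_u$, for every $\eta>0$ one can find finitely many pairwise disjoint, closed, connected $C^1$ pieces $\Sigma_1,\dots,\Sigma_N$ with $\hd(J_u\triangle\bigcup_i\Sigma_i)<\eta$; this is a consequence of Federer's structure theorem together with Whitney's extension. The pieces can be taken inside small balls where $J_u$ is parametrized as a Lipschitz graph close to the tangent plane, with $C^1$-boundary $\partial\Sigma_i$ of finite $\mathcal{H}^{d-2}$-measure. Along a tubular neighborhood $T_\rho(\Sigma_i)$ I define the reflection $R_i$ across $\Sigma_i$; for $\rho$ small this is a $C^1$ diffeomorphism. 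I then set, on each side $T_\rho^\pm(\Sigma_i)$, $u^\pm:=u$ and extend it to the other side by $u\circ R_i$. The resulting functions $u^\pm$ are in $H^1$ on $T_\rho(\Sigma_i)\setminus(\text{missing jumps})$, and a Korn–Poincar\'e type estimate controls their norm by $\|\mathcal E u\|_{L^2}$ plus a boundary term of order $\rho\,\hd(\Sigma_i)$.

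\smallskip

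\emph{Step 3 (mollification and gluing).} Using a partition of unity subordinated to $\{T_\rho(\Sigma_i)\}\cup\{\Omega\setminus\bigcup_i T_{\rho/2}(\Sigma_i)\}$, I mollify at a scale $\sigma\ll\rho$: on each $T_\rho(\Sigma_i)$ I mollify $u^+$ and $u^-$ separately and glue them across $\Sigma_i$, while on the complement, where $u$ is essentially $H^1$ (after subtracting a piecewise constant absorbing the missing jumps), a standard mollification produces a smooth approximant. The outcome $u_n$ is Lipschitz off $\bigcup_i\Sigma_i$, smooth off a closed set contained in finitely many $C^1$-hypersurfaces, and satisfies \eqref{convme}--\eqref{convjump} by letting first $\sigma\to 0$ and then $\eta\to 0$; the $L^\infty$ bound from Step 1 is inherited. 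The main obstacle in this step is to ensure simultaneously that $\mathcal{E}u_n\to\mathcal{E}u$ in $L^2$ and that $\hd(J_{u_n}\triangle J_u)\to 0$: the two competing requirements force a delicate calibration $\sigma(\eta)\to 0$, and controlling the small missing portion of $J_u$ outside $\bigcup_i\Sigma_i$ requires covering it by cubes of total side diameter $o(\eta^{1/(d-1)})$ and modifying $u_n$ on those cubes by a piecewise affine Korn extension, which is the technically heaviest point.

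\smallskip

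\emph{Step 4 (boundary datum).} To obtain $u_n=u_0$ near $\dod$ I apply the previous scheme to the extension $\tilde u=u\mathbf 1_\Omega+u_0\mathbf 1_{\Omega'\setminus\Omega}$ on a slightly larger Lipschitz set $\Omega'\supset\overline\Omega$. The jump of $\tilde u$ across $\dod$ equals $\{\mathrm{tr}(u)\ne\mathrm{tr}(u_0)\}$, which is exactly the extra term in \eqref{1412190922}. Exploiting the assumption \eqref{0807170103}, I push the approximant slightly into $\Omega$ by composing with $O_{\delta,x_0}^{-1}$ on a collar of $\dod$ and interpolate with $u_0$ on an even thinner collar $U_n$; this guarantees $u_n=u_0$ on $U_n$ at the cost of an $o(1)$ error in every norm, as $\delta\to 0$. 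A diagonal argument closes the proof. The principal difficulty throughout is the joint control of $L^2$-convergence of $\mathcal Eu_n$ and $\hd$-convergence of jumps in absence of any $L^p$ compactness for $u$ itself, which is precisely the novelty of the $GSBD^2$ setting compared to the classical $SBD$ case.
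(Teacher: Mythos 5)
Your proposal does not match the paper, but more importantly the paper itself does not prove this statement: it is quoted verbatim from Chambolle--Crismale \cite[Theorem~1.1 and Section~5]{CC}, so the only fair comparison is with that proof, and your blind attempt has a genuine gap at its very first step. In Step 1 you truncate by a smooth map $\varphi_M$ and invoke ``the chain-rule formula for $GSBD$'' to conclude $\mathcal{E}(\varphi_M(u))\to\mathcal{E}u$ in $L^2$. The chain rule gives $\mathcal{E}(\varphi_M\circ u)=\mathrm{sym}\bigl(\nabla\varphi_M(u)\,\nabla u\bigr)$, which involves the \emph{full} gradient of $u$; in $GSBD^2$ only the approximate symmetric differential $\mathcal{E}u$ exists and is controlled, the full gradient is neither defined nor bounded, and there is no truncation operation compatible with symmetrized gradients. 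This is precisely the structural obstruction that makes the $(G)SBD$ theory hard (the same reason why slicing/coarea/truncation arguments from $SBV$ do not carry over, as the introduction of the present paper recalls), so Step 1 cannot be repaired by a different choice of $\varphi_M$, and the $L^\infty$ bound you propagate through Steps 2--3 is unavailable. A second, related gap is in Step 2: knowing that the part of $J_u$ not covered by $\bigcup_i\Sigma_i$ has small $\mathcal{H}^{d-1}$-measure does not make $u$ an $H^1$ function on the tubular neighborhoods minus that set, so the reflection construction and the Korn--Poincar\'e estimate you invoke there have no function space to act on.

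The actual proof in \cite{CC} proceeds differently: $\Omega$ is partitioned into cubes of a small size, cubes are classified as ``good'' or ``bad'' according to whether $\mathcal{H}^{d-1}(J_u\cap Q)$ is small compared to the $(d{-}1)$-power of the side length, a Korn--Poincar\'e-type inequality with a small exceptional set (in the spirit of Chambolle--Conti--Francfort and Friedrich) is applied on good cubes to replace $u$ there by a regular function, bad cubes are absorbed into the jump-set error, and the approximants are obtained by gluing and mollifying; convergence in measure (rather than any $L^p$ convergence) is exactly what this construction yields, which is why the theorem is stated that way. Your Step 4, extending by $u_0$ outside $\Omega$ and using the inward dilation permitted by \eqref{0807170103} to enforce $u_n=u_0$ near $\partial_D\Omega$ and to produce the extra boundary term in \eqref{1412190922}, is in the spirit of \cite[Section~5]{CC} and is a reasonable outline, but it cannot stand on the flawed interior approximation of Steps 1--3.
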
 
A further approximation result, by Cortesani and Toader \cite[Theorem~3.9]{CorToa99},  
allows us to approximate $GSBD^2(\Omega)$ functions with the so-called ``piecewise smooth''  $SBV$-functions, denoted $\mathcal{W}(\Omega;\Rd)$, characterized by the three properties 
\begin{equation}\label{1412191008}
\begin{cases}
u\in SBV(\Omega;\Rd)\cap W^{m,\infty}(\Omega\sm J_u;\Rd) \,\text{for every }m\in \N\,,\\
\hd(\overline{J}_u \sm J_u ) = 0\,,\\
\overline{J}_u \text{ is the intersection of $\Omega$ with a finite union of ${(d{-}1)}$-dimensional simplexes}\,.
\end{cases}
\end{equation}
As observed in \cite[Remark~4.3]{ChaCri19}, we may even approximate through functions $u$ such that, besides \eqref{1412191008}, also $\overline J_u \subset \Omega$ holds and the $(d{-}1)$-dimensional simplexes in the decomposition of $\overline J_u$ may be taken pairwise disjoint with $J_u \cap \Pi_i \cap \Pi_j=\emptyset$ for any two different hyperplanes $\Pi_i$, $\Pi_j$.
Furthermore, in the assumption under which \eqref{1412190922} holds true, we may also ensure that $u=u_0$ in a neighborhood of $\dom$.
We will employ these properties in Section~\ref{sec:upperbound}.

We recall the following general $GSBD^2$ compactness result from \cite{CC18Comp}.  In the following, when we deal with sets of finite perimeter, such as $A^\infty_u$, we identify the set with its subset of points with density 1, with respect to $d$-dimensional Lebesgue measure (cf.\ \cite[Definition~3.60]{AFP}), while we denote explicitly their essential boundary with the symbol $\partial^*$. 
\begin{thm}[$GSBD^2$ compactness]\label{th: GSDBcompactness}
 Let $\Omega \subset \R$ be an open, bounded set,  and let $(u_n)_n \subset  GSBD^2(\Omega)$ be a sequence satisfying
$$ \sup\nolimits_{n\in \N} \big( \Vert \mathcal{E}u_n \Vert_{L^2(\Omega)} + \mathcal{H}^{d-1}(J_{u_n})\big) < + \infty.$$
Then there exists a subsequence, still denoted by $u_n$, such that the set  $A^\infty_u := \lbrace x\in \Omega: \, |u_n(x)| \to +\infty \rbrace$ has finite perimeter, and  there exists  $u \in GSBD^2(\Omega)$ such that 
\begin{align}\label{eq: GSBD comp}
{\rm (i)} & \ \ u_n \to u \  \ \ \  \mbox{ in measure on $\Omega\sm A^\infty_u$}, \notag \\ 
{\rm (ii)} & \ \ \mathcal{E}u_n \rightharpoonup\mathcal{E}u \ \ \ \text{ in } L^2(\Omega \setminus A^\infty_u; \Mdd),\notag \\
{\rm (iii)} & \ \ \liminf_{n \to \infty} \mathcal{H}^{d-1}(J_{u_n}) \ge \mathcal{H}^{d-1}(J_u \cup  (\partial^*A^\infty_u \cap\Omega)  ).
\end{align}
\end{thm}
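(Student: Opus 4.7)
My approach would follow the strategy of \cite{CC18Comp}, based on a Korn--Poincar\'e inequality in $GSBD^2$ coupled with compactness of Caccioppoli partitions. The role of the inequality is to replace each $u_n$ by $u_n-a_n$, where $a_n$ is a piecewise infinitesimal rigid motion on a partition of $\Omega$ subordinated to $J_{u_n}$, so as to convert the controls on $\mathcal E u_n$ and $\hd(J_{u_n})$ into an $H^1$-type control of the modified function on all but an exceptional set of vanishing $\Ld$-measure. The set $A^\infty_u$ will then be identified with the union of those cells on which $|a_n|\to +\infty$.

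\textbf{Korn--Poincar\'e reduction.} First I would invoke, for each $u_n$, a $GSBD^2$ Korn--Poincar\'e inequality, yielding a Caccioppoli partition $(\Omega^i_n)_i$ of $\Omega$ with $\sum_i \hd(\partial^*\Omega^i_n \cap \Omega)\le C\,\hd(J_{u_n})$, piecewise infinitesimal rigid motions $a_n = \sum_i a^{(i)}_n \mathbbm{1}_{\Omega^i_n}$, and a function $w_n \in H^1(\Omega;\Rd)$ satisfying $w_n = u_n - a_n$ $\Ld$-a.e.\ outside an exceptional set $\omega_n$ with $|\omega_n|\to 0$, together with $\|\mathcal E w_n\|_{L^2(\Omega)}\le C\|\mathcal E u_n\|_{L^2(\Omega)}$. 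After a suitable normalization $w_n$ is equibounded in $H^1(\Omega;\Rd)$; up to subsequence, $w_n \weak w$ weakly in $H^1$, strongly in $L^2$ and pointwise a.e.

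\textbf{Identification of $A^\infty_u$.} Since $\sum_i \hd(\partial^*\Omega^i_n \cap \Omega)$ is equibounded, Caccioppoli-partition compactness produces a limit partition $(\Omega^i)_i$ of $\Omega$ with the usual lower semicontinuity of perimeters. A diagonal extraction over the labels $i$ ensures that on each limit cell $\Omega^i$ either the affine $a^{(i)}_n$ stay equibounded on compact subsets (and hence, being affine with skew-symmetric gradient, converge uniformly on compacta to an affine map $a^{(i)}_\infty$) or $|a^{(i)}_n|\to +\infty$ uniformly on compacta of $\Omega^i$. Defining $A^\infty_u$ as the union (modulo $\Ld$-null sets) of the cells of the second type gives a set of finite perimeter. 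On $\Omega\sm A^\infty_u$ set $u:= w + \sum_i a^{(i)}_\infty\,\mathbbm{1}_{\Omega^i}$ cell-by-cell; since $|\omega_n|\to 0$, one obtains $u_n\to u$ in measure on $\Omega\sm A^\infty_u$, which is (i). Assertion (ii) follows by weak $L^2$-compactness of $\mathcal E u_n$ on $\Omega\sm A^\infty_u$ (noting $\mathcal E a^{(i)}_\infty=0$ on each cell), giving in particular $u\in GSBD^2(\Omega\sm A^\infty_u)$.

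\textbf{The jump lower bound: main obstacle.} The hardest step is (iii). The inclusion $\hd(J_u)\le \liminf_n \hd(J_{u_n})$ relative to $\Omega\sm A^\infty_u$ follows from the standard lower semicontinuity theorem for $GSBD$ jumps (cf.\ \cite[Theorem~11.3]{DM2013}) applied to the converging-in-measure subsequence. The extra contribution $\hd(\partial^*A^\infty_u\cap\Omega)$ is the delicate point: any boundary portion separating a ``finite'' cell from an ``infinite'' one in the limit must be approximated by interfaces of the partitions $(\Omega^i_n)_i$, whose total $\hd$-measure is bounded by $C\,\hd(J_{u_n})$. The subtlety is to avoid double-counting with $J_u$, which requires noting that $J_u\subset\Omega\sm \overline{A^\infty_u}$ (being confined to the interior of the finite cells) while $\partial^*A^\infty_u$ lies on the interface between finite and infinite ones, so the two contributions concentrate on essentially disjoint sets. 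Putting these pieces together through a careful blow-up/slicing bookkeeping of the partition perimeters against $\hd(J_{u_n})$ delivers (iii) and constitutes, in my view, the true technical heart of the proof.
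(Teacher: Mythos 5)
This theorem is not proved in the paper at all: it is recalled verbatim from \cite{CC18Comp}, so the relevant comparison is with the proof given there, and your sketch both diverges from it and has a real gap. The proof in \cite{CC18Comp} is deliberately Korn-free: convergence in measure of $u_n$ outside the escape set is obtained by applying a Fr\'echet--Kolmogorov-type criterion to smooth bounded truncations of the scalar slices $\langle u_n,\xi\rangle$, with the translation estimates coming directly from the energy via one-dimensional slicing; the set $A^\infty_u$ and the limit $u$ are then built from these truncations, and (iii) is proved by a slicing argument that explicitly accounts for the interface $\partial^* A^\infty_u$. Your route instead rests, as its very first step, on a \emph{piecewise} Korn--Poincar\'e inequality in $GSBD^2$: a Caccioppoli partition subordinate to $J_{u_n}$ with total perimeter bounded by $C\,\mathcal{H}^{d-1}(J_{u_n})$, piecewise infinitesimal rigid motions $a_n$, and $H^1$-control of $u_n-a_n$ off an exceptional set. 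No such result is available in the generality needed here: the piecewise Korn inequality of Friedrich is two-dimensional and formulated in $SBD^2$, while the Korn-type inequalities known in every dimension (Conti--Focardi--Iurlano, Cagnetti--Chambolle--Scardia, Friedrich) require a \emph{small} jump set and subtract a single rigid motion. Since the statement (and its use in this paper) includes $d=3$ and functions with no integrability, this step is not a citation but an open ingredient; it is precisely the obstruction that motivated the different strategy of \cite{CC18Comp}.

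Even granting that input, your treatment of (iii) is not an argument. The claim that $J_u$ is ``confined to the interior of the finite cells'', i.e.\ $J_u\subset \Omega\setminus \overline{A^\infty_u}$, is unjustified and false in general: the limit $u$ may well jump on $\partial^* A^\infty_u\cap\Omega$, and in any case there is no double-counting problem to circumvent, because (iii) bounds the $\mathcal{H}^{d-1}$-measure of a \emph{union} of rectifiable sets. The ``careful blow-up/slicing bookkeeping'' you defer is exactly the content to be proved: one must show that interfaces separating, in the limit, a finite cell from an infinite one are charged by $\liminf_n \mathcal{H}^{d-1}(J_{u_n})$ \emph{simultaneously} with $J_u$, which does not follow from applying \cite[Theorem~11.3]{DM2013} on $\Omega\setminus A^\infty_u$ (an open-set hypothesis is needed, and the portion of $J_u$ on $\partial^* A^\infty_u$ is lost). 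Two further, more minor, repairs would be needed: the dichotomy for the affine maps $a^{(i)}_n$ holds $\mathcal{L}^d$-a.e.\ on the cell, not uniformly on compact subsets (consider $a_n(x)=n x_1$ on a cell meeting $\{x_1=0\}$), and the global $H^1$-normalization of $w_n$ uses Korn's inequality on $\Omega$, which is only open and bounded, not Lipschitz.
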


\paragraph*{\bf $GSBD^2_\infty$ functions.}

Inspired by the previous compactness result, in \cite{CriFri19}  a space of $GSBD^2$ functions which may also attain  a limit  value $\infty$ has been introduced, as we recall.  The space  $\bar{\R}^d := \R^d \cup \lbrace \infty \rbrace$ (with its sum  given by $a + \infty = \infty$ for any $a \in \bar{\R}^d$) is in a natural bijection with $ \mathbb{S}^d  =\lbrace \xi \in\R^{d+1}:\,|\xi| =1 \rbrace$ through the stereographic projection of $\mathbb{S}^{d}$ to $\bar{\R}^d$: for $\xi \neq e_{d+1}$,
$\phi(\xi) = \frac{1}{1-\xi_{d+1}}(\xi_1,\ldots,\xi_d),$
  $\phi(e_{d+1}) = \infty$. Let $\psi\colon \bar{\R}^d\to \mathbb{S}^{d}$  denote the inverse. Note that 
\begin{equation}\label{3005191230}
d_{\bar{\R}^d}(x,y):= |\psi(x) - \psi(y)|\quad \text{for }x,y \in \bar{\R}^d\end{equation} 
induces a bounded metric on $\bar{\R}^d$. Then
\begin{align}\label{eq: compact extension}
GSBD^2_\infty(\Omega) := \Big\{ &u \colon \Omega \to \bar{\R}^d \text{ measurable } \colon   A^\infty_u  := \lbrace u = \infty \rbrace \text{ satisfies } \mathcal{H}^{d-1}(\partial^* A^\infty_u)< +\infty, \notag \\
&  \ \ \ \ \ \ \ \ \ \  \ \ \ \ \ \widetilde{u}_t := u \chi_{\Omega \setminus A^\infty_u} + t \chi_{A^\infty_u} \in GSBD^2(\Omega) \ \text{ for all $t \in \R^d$} \Big\}\,. 
\end{align}
Symbolically, we will also write
$u = u \chi_{\Omega \setminus A^\infty_u} + \infty \chi_{A^\infty_u}\,.$
 Moreover, for any $u \in GSBD^2_\infty(\Omega)$
\begin{align}\label{eq: general jump}
\mathcal{E}u = 0 \text{ in } A^\infty_u \quad
 \text{and} \quad J_u = J_{u \chi_{\Omega \setminus A^\infty_u}} \cup (\partial^*A^\infty_u \cap \Omega)\,.
\end{align}
 In particular, 
\begin{align}\label{eq:same}
\mathcal{E}u = \mathcal{E}\widetilde{u}_t \ \ \text{$\mathcal{L}^d$-a.e.\ on  $\Omega$} \ \ \ \text{ and  } \ \ \ J_u = J_{\widetilde{u}_t} \ \ \text{$\mathcal{H}^{d-1}$-a.e.} \ \ \ \text{ for almost all $t \in \R$}\,,
\end{align} where $\widetilde{u}_t$ is the function from \eqref{eq: compact extension}.  Hereby, we also get a natural definition of a normal $\nu_u$ to the jump set $J_u$, and the slicing properties described for $GSBD^2$ still hold in $\Omega\sm A^\infty_u$.   Finally, we point out that all definitions are consistent with the usual ones if $u \in GSBD^2(\Omega)$; i.e.,\ if $A^\infty_u= \emptyset$.  Since $GSBD^2(\Omega)$ is a vector space, we observe that the sum of two functions in $GSBD^2_\infty(\Omega)$ lies again in this space.     
A  metric on $GSBD^2_\infty(\Omega)$  is given by
\begin{equation}\label{eq:metricd}
d(u,v) = \int_\Omega  d_{\bar{\R}^d}(u(x),v(x)) \, \dx\,,
\end{equation} 
 where $d_{\bar{\R}^d}$ is the distance in \eqref{3005191230}. In Sections~\ref{sec:compprelboun} and \ref{sec:lowerbound}, when we work in an extended domain $\wt\Omega$, we will still write $d(u,v)$ for  $\int_{\wt\Omega}  d_{\bar{\R}^d}(u(x),v(x)) \, \dx$. 
  We say that a sequence $(u_n)_n \subset GSBD^2_\infty(\Omega)$ \emph{converges weakly} to $u \in GSBD^2_\infty(\Omega)$ if 
\begin{align}\label{eq: weak gsbd convergence}
 \sup\nolimits_{n\in \N} \big( \Vert \mathcal{E}u_n \Vert_{L^2(\Omega)} + \mathcal{H}^{d-1}(J_{u_n})\big) < + \infty \ \ \ \text{and} \ \ \    d(u_n,u) \to 0 \text{ for } n \to \infty\,.
 \end{align}

\subsection{Some lemmas}

For $a<b$, we introduce the space $PC_\delta(a,b)$ of piecewise-constant functions on partitions of $(a,b)\subset \R$ with size $\delta$ \EEE; namely,
\begin{equation*}
\begin{split}
PC_\delta(a,b):=\biggl\{v:(a,b)\to\mathbb{R}:\,&\mbox{ there exists a partition $\{x_i\}_{i=0}^N$ of $[a,b]$ such that}\\
 & \quad |x_{i+1}-x_i|=\delta \mbox{ and } v(x)=v(x_i) \mbox{ on }[x_i,x_{i+1})\biggr\}.
\end{split}
\end{equation*}
For every $v\in PC_\delta(a,b)$, we denote by $\hat{v}$ the corresponding piecewise-affine interpolation on the nodes of the same partition, defined as
\begin{equation}
\hat{v}(x):=v(x_i)+\frac{v(x_{i+1})-v(x_i)}{x_{i+1}-x_i}(x-x_i),\quad x\in[x_i,x_{i+1})\,.
\label{pc-aff}
\end{equation}

\begin{lem}\label{Lemma1D}
Let $(v_\epsilon)_\epsilon$ be a sequence such that $v_\epsilon\in PC_\delta(a,b)$, $v_\epsilon\geq0$, and let $(\hat{v}_\epsilon)_\epsilon$ be the sequence of the corresponding piecewise-affine interpolations defined as in \eqref{pc-aff}. Assume that there exists $C>0$ such that
\begin{equation}
\frac{1}{\epsilon}\int_a^b(v_\epsilon(t)-1)^2\,\mathrm{d}t + \epsilon \int_a^b(\dot{\hat{v}}_\epsilon(t))^2\,\mathrm{d}t \leq C\,.
\label{Cbound}
\end{equation}
Then, setting
\begin{equation}
I:=\left\{s\in(a,b):\, \exists\, s_\epsilon\to s \mbox{ such that }\mathop{\lim\inf}_{\epsilon\to0}v_\epsilon(s_\epsilon)=0\right\}\,,
\label{setI}
\end{equation}
we have:\\
$(a)$ for every fixed constant $N_C>0$ depending only on $C$, it holds that
\begin{equation*}
\#I\leq N_C\,;
\end{equation*}
$(b)$ for every $A$ open such that $A\subset\!\subset\!(a,b)\backslash I$, there exists $\eta_A>0$ such that
\begin{equation*}
\mathop{\lim\inf}_{\epsilon\to0} \inf_{s\in A}v_\epsilon(s)\geq\eta_A\,.
\end{equation*}
\end{lem}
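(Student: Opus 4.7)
The plan is to reduce the problem to the classical one-dimensional Ambrosio–Tortorelli lower bound applied to the piecewise-affine interpolation $\hat v_\epsilon$. First, we may assume $0\le v_\epsilon\le 1$: replacing $v_\epsilon$ by $v_\epsilon\wedge 1$ only decreases both summands in \eqref{Cbound} (the first since $(t-1)^2$ is nondecreasing for $t\ge 1$; the second since truncation is $1$-Lipschitz and so reduces the slope of every segment of the interpolation), and clearly leaves $I$ unchanged. A direct computation on each subinterval $[x_i,x_{i+1}]$ gives
\begin{equation*}
\int_{x_i}^{x_{i+1}}(1-\hat v_\epsilon)^2\,\mathrm{d}t \;\le\; \frac{\delta}{2}\bigl((1-v_\epsilon(x_i))^2+(1-v_\epsilon(x_{i+1}))^2\bigr),
\end{equation*}
so, summing and using \eqref{Cbound}, there exists $C'=C'(C)$ with
\begin{equation*}
\int_a^b\left(\frac{(1-\hat v_\epsilon)^2}{\epsilon}+\epsilon\,|\dot{\hat v}_\epsilon|^2\right)\mathrm{d}t \;\le\; C'.
\end{equation*}
Setting $G(v):=2v-v^2$, so $G'(v)=2(1-v)\ge 0$ on $[0,1]$, Young's inequality yields the pointwise bound
\begin{equation*}
\frac{(1-\hat v_\epsilon)^2}{\epsilon}+\epsilon\,|\dot{\hat v}_\epsilon|^2 \;\ge\; 2(1-\hat v_\epsilon)|\dot{\hat v}_\epsilon|\;=\;\bigl|(G(\hat v_\epsilon))'\bigr|,
\end{equation*}
so the total variation of $G(\hat v_\epsilon)$ on $(a,b)$ is bounded by $C'$.

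For (a), argue by contradiction and suppose $I$ contains $N$ distinct points $s_1<\cdots<s_N$ with $N\ge N_C+1$, where $N_C:=\lfloor 2C'/3\rfloor+1$. A diagonal extraction gives a common subsequence (not relabelled) and nodes $x_{i_j}^\epsilon$ of the partition with $x_{i_j}^\epsilon\to s_j$ and $\hat v_\epsilon(x_{i_j}^\epsilon)=v_\epsilon(x_{i_j}^\epsilon)\to 0$ for every $j$. For $\epsilon$ small, the $N-1$ intervals $[x_{i_j}^\epsilon,x_{i_{j+1}}^\epsilon]$ are pairwise disjoint and each has length at least $(s_{j+1}-s_j)/2$. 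If $\hat v_\epsilon\le 1/2$ on some such interval, then $v_\epsilon\le 1/2$ on the same set and the first term of \eqref{Cbound} is at least $(s_{j+1}-s_j)/(16\epsilon)$, which is unbounded. Otherwise, by the intermediate value theorem applied to the continuous function $\hat v_\epsilon$, the value $1/2$ is attained at least twice on $[x_{i_j}^\epsilon,x_{i_{j+1}}^\epsilon]$ (once going up and once coming back down), so
\begin{equation*}
\int_{x_{i_j}^\epsilon}^{x_{i_{j+1}}^\epsilon}\bigl|(G(\hat v_\epsilon))'\bigr|\,\mathrm{d}t \;\ge\; 2G(1/2)-G(\hat v_\epsilon(x_{i_j}^\epsilon))-G(\hat v_\epsilon(x_{i_{j+1}}^\epsilon)) \;\xrightarrow[\epsilon\to 0]{}\;\tfrac{3}{2}.
\end{equation*}
Summing over $j=1,\dots,N-1$ gives, for $\epsilon$ small enough, $C'\ge (N-1)(3/2-o(1))>C'$, a contradiction. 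Hence $\#I\le N_C$.

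For (b), by (a) the set $I$ is finite hence closed, so $\overline{A}\subset(a,b)\setminus I$ is compact. Assume the claim fails; since $v_\epsilon$ takes only finitely many values on $(a,b)$, the infimum on $\overline{A}$ is attained, and one can extract a subsequence with $s_\epsilon\in\overline{A}$, $s_\epsilon\to s^\star\in\overline{A}$ and $v_\epsilon(s_\epsilon)\to 0$. By definition this forces $s^\star\in I$, contradicting $\overline{A}\cap I=\emptyset$. Hence $\liminf_\epsilon\inf_{A}v_\epsilon>0$, and any positive $\eta_A$ below this liminf does the job.

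The only mildly technical point is the transfer from the discrete phase-field term in \eqref{Cbound} to the continuous Ambrosio–Tortorelli functional for $\hat v_\epsilon$; once this is in place both assertions reduce to a one-dimensional optimal-profile counting argument and an elementary compactness reasoning.
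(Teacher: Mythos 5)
Your proof is correct and rests on the same mechanism as the paper's: a contradiction argument in which each point of $I$ forces a transition of the phase field across the level $\tfrac12$, whose cost is counted after an application of Young's inequality. The differences are in the bookkeeping. The paper applies Young's inequality directly in the mixed form $\tfrac1\epsilon(v_\epsilon-1)^2+\epsilon\dot{\hat v}_\epsilon^2\ge 2|v_\epsilon-1|\,|\dot{\hat v}_\epsilon|$ and estimates $\int|\dot{\hat v}_\epsilon|$ on intervals where $|v_\epsilon-1|\ge\tfrac12$; this avoids your preliminary truncation at $1$ and the transfer of the fidelity term to the interpolant. Your transfer step is fine, but the convexity estimate leaves an extra boundary contribution $\tfrac{\delta}{2\epsilon}(1-v_\epsilon(b))^2$ which is not controlled by \eqref{Cbound} alone; this is harmless because the intervals $[x_{i_j}^\epsilon,x_{i_{j+1}}^\epsilon]$ used in the contradiction are eventually compactly contained in $(a,b)$ (and in all applications $\delta\lesssim\epsilon$ anyway), but it deserves a word. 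Your use of $G(v)=2v-v^2$ and of the total variation of $G(\hat v_\epsilon)$ is the standard Modica--Mortola device and gives an equivalent (in fact slightly better) constant than the paper's $\tfrac14$ per transition. For (b) you give a complete compactness argument where the paper only asserts that it ``immediately follows from $(a)$''; note that your argument in fact uses only the definition \eqref{setI} and the compactness of $\overline A$, not $(a)$ itself, and the extension of the witnessing subsequence to a full sequence is immediate since $v_\epsilon\ge0$. The one genuinely delicate point is your ``diagonal extraction'' of a common subsequence along which $v_\epsilon(x_{i_j}^\epsilon)\to0$ for \emph{all} $j$ simultaneously: the subsequences realizing the $\liminf$s at the distinct points $s_j$ need not admit a common refinement, and this is exactly the step the paper also passes over (``we may assume that the subsequences \dots have infinite terms \dots in common''). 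Your proof is therefore on the same footing as the published one; in the places where the lemma is invoked the energies are anyway considered along a subsequence fixed in advance, which is where this issue is resolved in practice.
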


\proof

The assertion $(b)$ immediately follows from $(a)$. As for the proof of $(a)$, let us fix $N_C:=\lfloor 4C\rfloor$ and, arguing by contradiction, we assume that $\#I=N_C+1$ and $I=\{s^1,s^2,\dots, s^{N_C+1}\}$. For every such index $i$, we denote by $(s_\epsilon^i)_\epsilon$ the sequence defined by \eqref{setI} such that $s_\epsilon^i\to s^i$ and 
\begin{equation}
\displaystyle\mathop{\lim\inf}_{\epsilon\to0}v_\epsilon(s_\epsilon^i)=0\,.
\label{liminf1}
\end{equation}
Since by \eqref{Cbound} $v_\epsilon\to1$ a.e. in $(a,b)$, we can find a sequence $(t_\epsilon^i)_\epsilon$ such that
\begin{itemize}
\item[(i)] $s_\epsilon^i<t_\epsilon^i<s_\epsilon^{i+1}$\,;
\item[(ii)] $t_\epsilon^i\to s^i$\,;
\item[(iii)] $\displaystyle\mathop{\lim\inf}_{\epsilon\to0}v_\epsilon(t_\epsilon^i)=1$\,.
\end{itemize}
Moreover, we may assume that the subsequences of $s_\epsilon^i$ and $t_\epsilon^i$ realizing the liminf in \eqref{liminf1} and (iii), respectively, have infinite terms of the sequences of the indices in common. Now, let $\hat{s}^i_\epsilon$ and $\hat{t}^i_\epsilon$ be the greatest nodes of the partition that are less or equal than $s^i_\epsilon$ and $t^i_\epsilon$, respectively. Since $\delta\to0$ as $\epsilon\to0$, we have that $|s^i_\epsilon-\hat{s}^i_\epsilon|\to0$ and $|t^i_\epsilon-\hat{t}^i_\epsilon|\to0$, which, combined with the fact that $\hat{v}_\epsilon(\hat{s}^i_\epsilon)={v}_\epsilon(\hat{s}^i_\epsilon)$, $\hat{v}_\epsilon(\hat{t}^i_\epsilon)={v}_\epsilon(\hat{t}^i_\epsilon)$, with \eqref{liminf1} and (iii) give
\begin{align*}
\mathop{\lim\inf}_{\epsilon\to0} \hat{v}_\epsilon(\hat{s}^i_\epsilon)=0\,,\\
\mathop{\lim\inf}_{\epsilon\to0} \hat{v}_\epsilon(\hat{t}^i_\epsilon)=1\,.
\end{align*}

Now, for every $i$ and $\epsilon$, let $\tilde{t}_\epsilon^i$ be the first node of the partition such that $\tilde{t}_\epsilon^i\geq \hat{s}_\epsilon^i$ and $v_\epsilon(\tilde{t}_\epsilon^i)\geq\frac{1}{2}$, and let $\tau_\epsilon^i$ be the first point in $(\hat{s}_\epsilon^i,\tilde{t}_\epsilon^i)$ such that $\hat{v}_\epsilon(\tau_\epsilon^i)=\frac{1}{2}$, whose existence is ensured by the Mean Value Theorem. We then have
\begin{equation*}
v_\epsilon(s)\leq\frac{1}{2},\quad \forall s\in(\hat{s}^i_\epsilon,\tau_\epsilon^i)\,,
\end{equation*}
whence
\begin{equation}
|v_\epsilon(s)-1|\geq\frac{1}{2},\quad \forall s\in(\hat{s}^i_\epsilon,\tau_\epsilon^i)\,.
\label{geq}
\end{equation}
Now, by Young's inequality and \eqref{Cbound},
\begin{equation*}
\begin{split}
C&\geq\mathop{\lim\inf}_{\epsilon\to0}\sum_{i=1}^{N_C+1} \int_{\hat{s}^i_\epsilon}^{\tau_\epsilon^i} |v_\epsilon(t)-1||\dot{\hat{v}}_\epsilon(t)|\,\mathrm{d}t\geq \sum_{i=1}^{N_C+1}\mathop{\lim\inf}_{\epsilon\to0}\int_{\hat{s}^i_\epsilon}^{\tau_\epsilon^i} |v_\epsilon(t)-1||\dot{\hat{v}}_\epsilon(t)|\,\mathrm{d}t\\
&\stackrel{\eqref{geq}}{\geq} \frac{1}{2}  \sum_{i=1}^{N_C+1}\mathop{\lim\inf}_{\epsilon\to0}\int_{\hat{s}^i_\epsilon}^{\tilde{t}_\epsilon^i} |\dot{\hat{v}}_\epsilon(t)|\,\mathrm{d}t \geq \frac{1}{4}(N_C+1)\,,
\end{split}
\end{equation*}
which gives a contradiction.
\endproof

\begin{lem}\label{lemma2}
Let $\Omega'\subset\!\subset\!\Omega$ and $(f_\epsilon)_\epsilon$, $(g_\epsilon)_\epsilon$ be sequences of real-valued measurable functions such that
\begin{enumerate}
\item[(i)] $f_\epsilon\to f$ a.e. in $\Omega$,\, $0\leq f_\epsilon(x)\leq M$;
\item[(ii)] $g_\epsilon\rightharpoonup g$ in $L^2(\Omega')$,
\end{enumerate}
for some measurable $f$ and $g$. Then,
\begin{equation*}
f_\epsilon g_\epsilon \rightharpoonup fg \mbox{ in $L^2(\Omega')$}\,.
\end{equation*}
In particular,
\begin{equation*}
\mathop{\lim\inf}_{\epsilon\to0}\int_\Omega (f_\epsilon g_\epsilon)^2(x)\mathrm{d} x\geq \int_{\Omega'} (fg)^2(x)\mathrm{d} x\,.
\end{equation*}
\end{lem}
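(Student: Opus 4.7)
The plan is to establish first the weak convergence $f_\epsilon g_\epsilon \weak fg$ in $L^2(\Omega')$, and then deduce the liminf inequality from weak lower semicontinuity of the $L^2$-norm. For the weak convergence, I would fix an arbitrary test function $\phi \in L^2(\Omega')$ and split
\begin{equation*}
\int_{\Omega'} (f_\epsilon g_\epsilon - fg)\,\phi \dx = \int_{\Omega'} f\,(g_\epsilon - g)\,\phi \dx + \int_{\Omega'} (f_\epsilon - f)\,g_\epsilon\,\phi \dx\,.
\end{equation*}

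The first integral tends to $0$ by the very definition of weak convergence of $g_\epsilon$ in $L^2(\Omega')$, since $f\phi \in L^2(\Omega')$: indeed, by passing to the pointwise a.e.\ limit in (i) one has $0\le f \le M$ a.e.\ in $\Omega$, so $f\phi$ is square integrable on $\Omega'$. For the second integral, I would use the Banach--Steinhaus principle to extract a uniform bound $\|g_\epsilon\|_{L^2(\Omega')} \le K$ from the weak convergence, and then estimate via Cauchy--Schwarz
\begin{equation*}
\left|\int_{\Omega'} (f_\epsilon-f)\,g_\epsilon\,\phi \dx\right| \le K\,\|(f_\epsilon-f)\phi\|_{L^2(\Omega')}\,.
\end{equation*}
The remaining task is to check that $(f_\epsilon-f)\phi \to 0$ in $L^2(\Omega')$, which is a direct application of the Lebesgue dominated convergence theorem, since $(f_\epsilon-f)\phi \to 0$ pointwise a.e.\ and $|(f_\epsilon-f)\phi|^2 \le 4M^2\,|\phi|^2 \in L^1(\Omega')$.

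For the second part of the statement, since the integrand $(f_\epsilon g_\epsilon)^2$ is nonnegative one has
\begin{equation*}
\int_\Omega (f_\epsilon g_\epsilon)^2(x)\dx \ge \int_{\Omega'} (f_\epsilon g_\epsilon)^2(x)\dx\,,
\end{equation*}
and the desired liminf inequality then follows by applying the lower semicontinuity of the $L^2(\Omega')$-norm with respect to the weak convergence $f_\epsilon g_\epsilon \weak fg$ just established. Overall this is a routine argument; the only non-cosmetic point is the interplay between the uniform $L^\infty$ control on $f_\epsilon$ (used through dominated convergence) and the uniform $L^2$ bound on $g_\epsilon$ (furnished by Banach--Steinhaus), which together allow the product to pass to the weak limit.
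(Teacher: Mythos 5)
Your argument is correct: the splitting $\int_{\Omega'}(f_\epsilon g_\epsilon-fg)\phi=\int_{\Omega'}f(g_\epsilon-g)\phi+\int_{\Omega'}(f_\epsilon-f)g_\epsilon\phi$, combined with the uniform $L^2$ bound on $g_\epsilon$ from Banach--Steinhaus, dominated convergence for $(f_\epsilon-f)\phi$ (using $0\le f_\epsilon\le M$, hence $0\le f\le M$ a.e.), and weak lower semicontinuity of the $L^2(\Omega')$-norm together with $\int_\Omega(f_\epsilon g_\epsilon)^2\ge\int_{\Omega'}(f_\epsilon g_\epsilon)^2$, is exactly the standard proof of this statement, which the paper records without proof. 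The only cosmetic addition one might make is to note explicitly that $\|f_\epsilon g_\epsilon\|_{L^2(\Omega')}\le M\,\|g_\epsilon\|_{L^2(\Omega')}$ is uniformly bounded, so that $f_\epsilon g_\epsilon$ indeed belongs to $L^2(\Omega')$ and the tested convergence amounts to weak convergence there; this is immediate from your hypotheses and bounds.
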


For $\delta>0$, and for \EEE any measurable function $u:\Omega\subset\R^d\to\R^d$ and $y\in\R^d\backslash\{0\}$ we define the translations
\begin{equation}
T_y^{\delta}u(x):=u\left(\delta y +\delta\left\lfloor\frac{x}{\delta}\right\rfloor\right)\,,
\label{transldiscr}
\end{equation}
where $\lfloor z \rfloor:=\sum_{i=1}^d\left\lfloor{\langle z,e_i\rangle}\right\rfloor e_i$ and, for every $t\in\R$, $\lfloor t \rfloor$ denotes the integer part of $t$. We have that $T_y^{\delta}u$ is constant on each $d$-cube $\alpha+\delta (0,1]^d$, $\alpha\in\delta\Z^d$. Moreover, the following result holds (see, e.g., \cite[Lemma~2.11]{AFG}).
\begin{lem}\label{lemmatransl}
Let $u_\delta\to u$ in $L^1(\Omega;\R^d)$ as $\delta\to0$. Then for every $\Omega'\!\subset\!\subset \Omega$ it holds
\begin{enumerate}
\item[{\rm(i)}]
\begin{equation}
\lim_{\delta\to0}\int_{[0,1]^d}\|T_y^{\delta} u_\delta - u\|_{L^1(\Omega',\R^d)}\,\mathrm{d}y=0\,;
\label{transconverg}
\end{equation}
\item[{\rm (ii)}] if $C_\delta\subset [0,1]^d$ is a family of sets such that $\displaystyle\mathop{\lim\inf}_{\delta\to0}|C_\delta|>0$, then there exists a sequence $y_\delta\in C_\delta$ such that $T_{y_\delta}^{\delta}u_\delta\to u$ in $L^1(\Omega';\R^d)$.
\end{enumerate}
\end{lem}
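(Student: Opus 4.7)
\medskip\noindent\textbf{Proof plan.} The strategy is standard: write
\[
T_y^{\delta} u_\delta - u \,=\, T_y^{\delta}(u_\delta - u) + (T_y^{\delta} u - u),
\]
integrate in $y$ over $[0,1]^d$, and control the two summands separately by a Fubini/change-of-variables argument and by $L^1$-continuity of translations, respectively. Part $(ii)$ will follow from $(i)$ by a standard mean-value argument on $C_\delta$.

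\medskip\noindent\textbf{Step 1: handling $T_y^{\delta}(u_\delta - u)$.} Fix $\delta$ so small that the $\delta$-neighborhood of $\Omega'$ is contained in $\Omega$ (possible since $\Omega'\subset\!\subset\Omega$). For each $x\in\Omega'$, $T_y^{\delta}(u_\delta-u)(x) = (u_\delta - u)(\delta\lfloor x/\delta\rfloor + \delta y)$, so by Fubini and the change of variable $z = \delta\lfloor x/\delta\rfloor + \delta y$ on each cube $\alpha + \delta(0,1]^d$ with $\alpha\in\delta\Z^d$,
\[
\int_{[0,1]^d}\!\!\int_{\Omega'}|T_y^{\delta}(u_\delta-u)(x)|\,\dx\,\mathrm{d}y \,=\, \sum_{\alpha\in \delta\Z^d} \frac{|(\alpha+\delta(0,1]^d)\cap \Omega'|}{\delta^d}\int_{\alpha+\delta(0,1]^d}|u_\delta-u|\,\mathrm{d}z
\,\le\, \|u_\delta - u\|_{L^1(\Omega)},
\]
which vanishes as $\delta\to 0$ by assumption.

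\medskip\noindent\textbf{Step 2: handling $T_y^{\delta}u - u$.} Here $u$ is fixed. Given $\eta>0$, approximate $u$ in $L^1(\Omega)$ by a $\varphi\in C_c^0(\R^d;\R^d)$ with $\|u-\varphi\|_{L^1(\Omega)}<\eta$. A Fubini argument as in Step 1 gives $\int_{[0,1]^d}\|T_y^{\delta}(u-\varphi)\|_{L^1(\Omega')}\mathrm{d}y\le \|u-\varphi\|_{L^1(\Omega)}<\eta$ for small $\delta$, while $|T_y^\delta \varphi(x)-\varphi(x)|\le \omega_\varphi(\sqrt d\,\delta)$ uniformly in $x\in \Omega'$ and $y\in[0,1]^d$, where $\omega_\varphi$ is the modulus of continuity of $\varphi$. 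Hence
\[
\limsup_{\delta\to 0}\int_{[0,1]^d}\|T_y^{\delta}u - u\|_{L^1(\Omega')}\mathrm{d}y \,\le\, 2\eta,
\]
and $\eta$ being arbitrary, this integral tends to $0$. Combining Steps 1 and 2 yields \eqref{transconverg}.

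\medskip\noindent\textbf{Step 3: deducing $(ii)$.} By $(i)$ and $\liminf_{\delta\to 0}|C_\delta|>0$,
\[
\frac{1}{|C_\delta|}\int_{C_\delta}\|T_y^{\delta}u_\delta-u\|_{L^1(\Omega';\R^d)}\,\mathrm{d}y \,\le\, \frac{1}{|C_\delta|}\int_{[0,1]^d}\|T_y^{\delta}u_\delta-u\|_{L^1(\Omega';\R^d)}\,\mathrm{d}y \,\xrightarrow[\delta\to 0]{}\, 0.
\]
Therefore, for each $\delta$ one can pick $y_\delta\in C_\delta$ achieving a value no larger than the mean, so that $\|T_{y_\delta}^{\delta}u_\delta - u\|_{L^1(\Omega';\R^d)}\to 0$, as desired.

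\medskip\noindent\textbf{Expected difficulty.} There is no real technical obstacle: the only point requiring care is that the change-of-variable estimate in Step~1 needs the cubes intersecting $\Omega'$ to lie inside $\Omega$ (for $\u_\delta - u$ to make sense there), which is handled by choosing $\delta<\dist(\Omega',\partial\Omega)/\sqrt d$. Everything else is a combination of Fubini and $L^1$-density of continuous functions.
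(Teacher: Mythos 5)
Your proof is correct. Note that the paper does not prove this lemma at all: it is quoted with a reference to \cite[Lemma~2.11]{AFG}, and your argument — splitting $T_y^\delta u_\delta-u$ into $T_y^\delta(u_\delta-u)$ plus $T_y^\delta u-u$, controlling the first by Fubini and the exact change of variables on the cubes $\alpha+\delta[0,1)^d$ (which lie in $\Omega$ for $\delta$ small since $\Omega'\subset\!\subset\Omega$), the second by density of continuous functions and uniform continuity, and then deducing (ii) by the mean-value selection on $C_\delta$ — is precisely the standard argument behind the cited result, so it fills the gap correctly and in the expected way.
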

Let $d\in\{2,3\}$. We set 
\begin{equation}
\begin{split}
S_d:=\{e_i:\, i=1,\dots,d\}&\cup\{e_i+e_j,\, e_i-e_j:\, 1\leq i<j\leq d\}\\
&\cup\{\{e_i\pm e_j\pm e_k\}:\, 1\leq i<j<k\leq d\}\,,
\end{split}
\label{setdirec}
\end{equation}
and consider a kernel function $\sigma:\Z^d\to[0,+\infty)$ such that 
\begin{subequations}\label{eqs:0912191317}
\begin{equation}\label{0912191318}
\sigma(\xi)=\sigma(|\xi|)
\end{equation}
and $\sigma(\xi)\neq0$ for every $\xi\in S_d$; we will often use the shortcut \begin{equation}\label{0912191319}
\sigma_r:=\sigma(r) \text{ when }|\xi|=r\,.
\end{equation}

\begin{lem}
Let $d\in\{2,3\}$ and $M$ be a $d\times d$ symmetric matrix.
Then, defining $S_d$ and $\sigma$ as before, it holds that
\begin{equation}
\sum_{\xi\in S_d}\frac{\sigma_{|\xi|}}{|\xi|^4}|\langle M\xi,\xi\rangle|^2=c_{1,\sigma,d}\sum_{i=1}^d M_{ii}^2 + 2c_{2,\sigma,d}\sum_{1\leq i<j\leq d}M_{ij}^2+c_{3,\sigma,d}\left(\sum_{i=1}^d M_{ii}\right)^2\,,
\label{matrix1}
\end{equation}
where
\begin{equation*}
\begin{split}
&c_{1,\sigma,d}:=\left(\sigma_1+\frac{\sigma_{\sqrt{2}}}{2}(d-2)\right), \quad c_{2,\sigma,d}:=\left(\sigma_{\sqrt{2}}+\frac{8\sigma_{\sqrt{3}}}{9}(d-1)(d-2)\right),\\
&c_{3,\sigma,d}:=\left(\frac{\sigma_{\sqrt{2}}}{2}+\frac{4\sigma_{\sqrt{3}}}{9}(d-1)(d-2)\right).
\end{split}
\end{equation*}
\end{lem}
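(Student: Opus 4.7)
The statement is a purely algebraic identity, so the plan is to split the sum on the left according to the three possible values of $|\xi|$ in $S_d$ and expand each piece in the three invariants $\sum_i M_{ii}^2$, $\sum_{i<j}M_{ij}^2$ and $\big(\sum_i M_{ii}\big)^2$. Since the body-diagonal part of $S_d$ is vacuous when $d=2$, the factors $(d-1)(d-2)$ appearing in $c_{2,\sigma,d}$ and $c_{3,\sigma,d}$ will emerge automatically.

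First, the vectors $\xi=e_i$ ($|\xi|=1$) contribute $\langle Me_i,e_i\rangle^2 = M_{ii}^2$, so their total contribution is $\sigma_1\sum_{i=1}^d M_{ii}^2$. Next, for the face-diagonal directions I would parametrize $\xi=e_i+\varepsilon e_j$ with $\varepsilon\in\{\pm 1\}$ and $i<j$, so that $\langle M\xi,\xi\rangle=M_{ii}+M_{jj}+2\varepsilon M_{ij}$; summing the square over $\varepsilon$ the cross term cancels and one gets $2(M_{ii}+M_{jj})^2+8M_{ij}^2$. After dividing by $|\xi|^4=4$ and summing over $i<j$, the key elementary identity
\[
\sum_{1\le i<j\le d}(M_{ii}+M_{jj})^2 \;=\; (d-2)\sum_{i=1}^d M_{ii}^2 + \Big(\sum_{i=1}^d M_{ii}\Big)^2,
\]
which follows from $\sum_{i<j}(M_{ii}^2+M_{jj}^2)=(d-1)\sum_i M_{ii}^2$ combined with $2\sum_{i<j}M_{ii}M_{jj}=(\sum_i M_{ii})^2-\sum_i M_{ii}^2$, produces exactly the $\sigma_{\sqrt 2}$-contributions to $c_{1,\sigma,d}$, $c_{2,\sigma,d}$ and $c_{3,\sigma,d}$.

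For the body-diagonal directions ($|\xi|=\sqrt 3$, only present when $d=3$), I would write $\xi=s_0 e_i+s_1 e_j+s_2 e_k$ with $(s_0,s_1,s_2)\in\{\pm 1\}^3$; since $s_a^2=1$,
\[
\langle M\xi,\xi\rangle \;=\; M_{ii}+M_{jj}+M_{kk}+2s_0s_1 M_{ij}+2s_0s_2 M_{ik}+2s_1s_2 M_{jk}.
\]
Squaring and summing over the eight sign patterns, the diagonal/off-diagonal cross terms vanish because each of $s_0s_1$, $s_0s_2$, $s_1s_2$ has zero sum on $\{\pm 1\}^3$, and so do the off-diagonal/off-diagonal cross terms, since products like $s_0s_1\cdot s_0s_2=s_1s_2$ again average to zero. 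What remains is $8(M_{ii}+M_{jj}+M_{kk})^2+32(M_{ij}^2+M_{ik}^2+M_{jk}^2)$. Dividing by $|\xi|^4=9$ and observing that for $d=3$ there is a single triple, this contributes precisely $\tfrac{8\sigma_{\sqrt 3}}{9}\big(\sum_i M_{ii}\big)^2+\tfrac{32\sigma_{\sqrt 3}}{9}\sum_{i<j}M_{ij}^2$, which matches the $\sigma_{\sqrt 3}$-portion of $c_{2,\sigma,3}$ and $c_{3,\sigma,3}$ (the factor $(d-1)(d-2)=2$ for $d=3$ accounting for the $\frac{32}{9}$ vs $\frac{16}{9}$ discrepancy).

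Summing the three contributions and collecting like terms finishes the proof. There is no substantial obstacle here; the only point that requires a little care is the systematic cancellation of the sign-dependent cross terms in the body-diagonal computation, which reduces to the elementary observation that each of the degree-two monomials $s_0s_1$, $s_0s_2$, $s_1s_2$ has zero mean over $\{\pm 1\}^3$.
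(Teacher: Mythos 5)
Your proposal is correct and takes essentially the same route as the paper: split the sum according to $|\xi|\in\{1,\sqrt{2},\sqrt{3}\}$, expand each group, and absorb the diagonal products via $\sum_{1\le i<j\le d}(M_{ii}+M_{jj})^2=(d-2)\sum_{i}M_{ii}^2+\big(\sum_i M_{ii}\big)^2$, with the sign-dependent cross terms cancelling upon summation. The only caveat is that in the $|\xi|=\sqrt{3}$ step you sum over all eight sign vectors $(s_0,s_1,s_2)\in\{\pm1\}^3$, which is twice the four directions $e_i\pm e_j\pm e_k$ literally listed in \eqref{setdirec}; since this doubled count is exactly the convention the stated coefficients $\frac{8\sigma_{\sqrt 3}}{9}(d-1)(d-2)$ and $\frac{4\sigma_{\sqrt 3}}{9}(d-1)(d-2)$ (and the paper's own expansion) correspond to, your computation reproduces the identity as written.
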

\proof
We can rewrite the sum on left hand side of \eqref{matrix1} as (recall that $\{e_1, \dots, e_d\}$ denote the canonical basis of $\R^d$)
\begin{equation*}
\begin{split}
&\sigma_1\sum_{i=1}^d M_{ii}^2 + \frac{\sigma_{\sqrt{2}}}{4} \sum_{1\leq i<j\leq d}|\langle M(e_i\pm e_j),e_i\pm e_j\rangle|^2 \\
&+ \frac{\sigma_{\sqrt{3}}}{9} \sum_{1\leq i<j<k\leq d}|\langle M(e_i\pm e_j\pm e_k),e_i\pm e_j\pm e_k\rangle|^2\\
&= \sigma_1 \sum_{i=1}^d M_{ii}^2 + \frac{\sigma_{\sqrt{2}}}{4} \sum_{1\leq i<j\leq d}(M_{ii}+M_{jj}\pm2M_{ij})^2 \\
&+ \frac{\sigma_{\sqrt{3}}}{9} \sum_{1\leq i<j<k\leq d}(M_{ii}+M_{jj}+M_{kk}\pm2M_{ij}\pm 2M_{ik}\pm 2M_{jk})^2 \\
&= \sigma_1 \sum_{i=1}^d M_{ii}^2 + 2\sigma_{\sqrt{2}}\sum_{1\leq i<j\leq d}M_{ij}^2+\frac{\sigma_{\sqrt{2}}(d-2)}{2}\sum_{i=1}^d M_{ii}^2 +\frac{\sigma_{\sqrt{2}}}{2}\left(\sum_{i=1}^d M_{ii}\right)^2\\
&+\frac{16\sigma_{\sqrt{3}}}{9}(d-1)(d-2)\sum_{1\leq i<j\leq d}M_{ij}^2 +\frac{4\sigma_{\sqrt{3}}}{9}(d-1)(d-2)\left(\sum_{i=1}^d M_{ii}\right)^2,
\end{split}
\end{equation*}
which coincides with the right hand side of \eqref{matrix1}. 
\endproof

\begin{oss}
Notice that setting $\tilde{c}_{\sigma,d}:=\min\{c_{1,\sigma,d},c_{2,\sigma,d}\}$, from \eqref{matrix1} we may deduce the bound
\begin{equation}
\tilde{c}_{\sigma,d}|M|^2\leq \sum_{\xi\in S_d}\frac{\sigma_{|\xi|}}{|\xi|^4}|\langle M\xi,\xi\rangle|^2\,.
\label{matrix2}
\end{equation}
Moreover, choosing in \eqref{matrix1}
\begin{equation}\label{0912191307}
\begin{cases}
\sigma_1=\sigma_{\sqrt{2}}=1 & \mbox{ if $d=2$\,,}\\
\sigma_1=\frac{3}{4}, \sigma_{\sqrt{2}}=\frac{1}{2}, \sigma_{\sqrt{3}}=\frac{9}{32}, & \mbox{ if $d=3$\,,}
\end{cases}
\end{equation}
we obtain the identity
\begin{equation*}
\sum_{\xi\in S_d}\frac{\sigma_{|\xi|}}{|\xi|^4}|\langle M\xi,\xi\rangle|^2 = |M|^2 + \frac{1}{2}  |\mathrm{Tr}M|^2\,.
\end{equation*}
\label{rem:summation}
\end{oss}
\end{subequations}

\section{Discrete models and approximation results}\label{sec:discrmodel}

Let $d\in\{2,3\}$, $\Omega\subset \Rd$ an open, bounded, Lipschitz set, with $\dom$ satisfying \eqref{0807170103} and the related assumptions, and let $u_0\in H^1(\Rd;\Rd)$. For any $\delta>0$, we consider the scaled lattice $\delta\mathbb{Z}^d$ and set $\Omega_\delta:=\Omega\cap\delta\Z^d$. 
 We introduce  suitable discretizations for both the symmetrized gradient and the divergence. For $\xi\in\mathbb{R}^d\backslash\{0\}$, $\delta>0$, and $u:\Omega\to\mathbb{R}^d$ measurable we define
\begin{equation}
\begin{split}
D_\delta^\xi u(x)&:= \left\langle u(x+\delta\xi)-u(x),\frac{\xi}{|\xi|^2}\right\rangle\,,\\
|D_{\delta,\xi}u(x)|^2 &:= |D_\delta^\xi u(x)|^2 + |D_\delta^{-\xi} u(x)|^2\,.
\end{split}
\label{diffquot1}
\end{equation}
For a scalar function $v:\Omega\to\mathbb{R}$, we will often adopt the notation 
\begin{equation}
\Delta_\delta^\xi v(x):= v(x+\delta\xi)-v(x)\,.
\label{diffquot2}
\end{equation}
Moreover, for any $\{\psi_1,\dots,\psi_d\}$ orthogonal basis of $\R^d$, we set
\begin{equation}
{\rm div}_\delta^{\psi_1,\dots,\psi_d} u(x):=\sum_{i=1}^d D_\delta^{\psi_i}u(x)\,.
\label{discrediv1}
\end{equation}
Then
we define 
\begin{equation}
\begin{split}
|{\rm Div}_{\delta}\,u(x)|^2:= \sum_{(k_1,\dots,k_d)\in\{-1,1\}^d}|{\rm div}_\delta^{k_1e_1,k_2e_2,\dots,k_de_d} u(x)|^2\,.
\end{split}
\label{discrediv2}
\end{equation}
In order to impose a non-interpenetration constraint in the limit fracture energy, we treat differently in the approximation the positive and negative part of the discrete divergence.
 We set, for $u\colon \Omega\to \Rd$ measurable,
\begin{subequations}\label{eqs:discredivpm}
\begin{equation}\label{discredivpm}
({\rm div}_\delta^\pm)^{\psi_1,\dots,\psi_d} u(x):=\Big(\sum_{i=1}^d D_\delta^{\psi_i}u(x)\Big)^\pm\,,
\end{equation}
\begin{equation}\label{discrediv2pm}
\begin{split}
|{\rm Div}_{\delta}^\pm u(x)|^2:= \sum_{(k_1,\dots,k_d)\in\{-1,1\}^d}|({\rm div}_\delta^{\pm})^{k_1e_1,k_2e_2,\dots,k_de_d} u(x)|^2\,,
\end{split}
\end{equation}
\end{subequations}

For $u\colon \Omega\to \Rd$, $v\colon \Omega\to \R$ measurable,  
$\xi\in\mathbb{Z}^d\backslash\{0\}$, $\sigma_{|\xi|}$ fixed from \eqref{eqs:0912191317},  we consider the functionals $F_\epsilon^\xi$, $F_\varepsilon$, ${F}^{\rm div}_\epsilon$ 
defined as 
\begin{subequations}\label{eqs:energiesF}
\begin{equation}
F_\epsilon^\xi(u,v):=\frac{1}{2}\sum_{\alpha\in R_\delta^\xi(\Omega)}\delta^{d-2}(v(\alpha))^2\left|D_{\delta,\xi}u(\alpha)\right|^2\,, \quad F_\epsilon(u,v):=\sum_{\xi\in S_d}\sigma_{|\xi|}F_\epsilon^\xi(u,v)\,,
\label{energiesF}
\end{equation}
\begin{equation}
{F}^{\rm div}_\epsilon(u,v):=\frac{1}{2^d}\sum_{\alpha\in R_\delta^{{\rm div}}(\Omega)}\delta^{d-2}(v(\alpha))^2\left|{\rm Div}_{\delta}u(\alpha)\right|^2\,,
\label{energiesF2}
\end{equation}
\end{subequations}
where 
\begin{equation}
R_\delta^\xi(\Omega):= \biggl\{\alpha\in\delta\mathbb{Z}^d:\,  [\alpha-\delta\xi,\alpha+\delta\xi]\subset\Omega\biggr\},\quad R_\delta^{{\rm div}}(\Omega):=\bigcap_{i=1}^dR_\delta^{e_i}(\Omega)\,,
\label{range}
\end{equation}
and ${F}^{{\rm div}^+}_\epsilon$, ${F}^{{\rm div}^-}_\epsilon$, $F_\varepsilon^{\mathrm{div},\mathrm{NI}}$ given by
\begin{subequations}\label{eqs:energiesF2pm}
\begin{equation}\label{energiesF2p}
{F}^{{\rm div}^+}_\epsilon(u,v):=\frac{1}{2^d}\hspace{-1em}\sum_{\alpha\in R_\delta^{{\rm div}}(\Omega)}\hspace{-1em}\delta^{d-2}(v(\alpha))^2\left|{\rm Div}_{\delta}^+ u(\alpha)\right|^2\,,
\end{equation}
\begin{equation}\label{energiesF2m}
{F}^{{\rm div}^-}_\epsilon(u):=\frac{1}{2^d} \hspace{-1em}\sum_{\alpha\in R_\delta^{{\rm div}}(\Omega)}\hspace{-1em}\delta^{d-2}\left|{\rm Div}_{\delta}^- u(\alpha)\right|^2\hspace{-0.5em}\,,\quad F_\varepsilon^{\mathrm{div},\mathrm{NI}}(u,v):= {F}^{{\rm div}^+}_\epsilon(u,v) + {F}^{{\rm div}^-}_\epsilon(u)\,.
\end{equation}
\end{subequations}
Notice that ${F}^{{\rm div}^-}_\epsilon$ does not include any contribution in $v$. Moreover, we introduce the discrete Modica-Mortola-type functional  
\begin{equation}
G_\epsilon(v):=\frac12\sum_{\alpha\in \Omega_\delta}\delta^d\left(\frac{1}{\epsilon}(v(\alpha)-1)^2 + \epsilon\sum_{\substack{k=1\\\alpha+\delta e_k\in \Omega_\delta}}^d\left(\frac{v(\alpha+\delta e_k)-v(\alpha)}{\delta}\right)^2\right)\,.
\label{energiesG}
\end{equation}
It will be useful to introduce also a localized version of the functionals defined above. For every $A\subset\Omega$ open bounded set, the symbols $F_\epsilon^\xi(u,v,A)$, ${F}^{\rm div}_\epsilon(u,v,A)$ and $G_\epsilon(v,A)$ denote the energies as in \eqref{energiesF}, \eqref{energiesF2} and \eqref{energiesG}, respectively, where the sums are restricted to $\alpha\in R_\delta^\xi(A)$ defined as in \eqref{range} with $A$ in place of $\Omega$.

For $\lambda$, $\theta >0$,  let $E_\epsilon\ltm$ and $(E_{\lambda, \theta}^{\mathrm{NI}})_\varepsilon$ be defined on $ L^1(\Omega;\mathbb{R}^d)\times L^1(\Omega;\mathbb{R})$
by
\begin{equation*}
E_\epsilon\ltm(u,v):=
 \lambda \, F_\epsilon(u,v)+\theta \, {F}^{\rm div}_\epsilon(u,v)+ G_\epsilon(v)
\label{eq:mainenergies}
\end{equation*}
and 
\begin{equation*}
(E_{\lambda, \theta}^{\mathrm{NI}})_\varepsilon(u,v):=
\lambda\, F_\epsilon(u,v)+\theta\,{F}^{\mathrm{div}, \mathrm{NI}}_\epsilon(u,v) +G_\epsilon(v)\,, 
\end{equation*}

Let us define the class of vector-valued piecewise constant functions on $\Omega$
\begin{equation*}
\mathcal{A}_\delta(\Omega;\mathbb{R}^d):=\biggl\{u:\Omega\to\mathbb{R}^d:\, u(x)\equiv u(\alpha) \mbox{ for every }x\in(\alpha+[0,\delta)^d)\cap\Omega\mbox{ for any }\alpha\in\delta\mathbb{Z}^d\biggr\}\,,
\end{equation*}
and, analogously, the class of real-valued piecewise constant functions $\mathcal{A}_\delta(\Omega;\mathbb{R})$; in order to deal with the Dirichlet boundary value problem, we set
\begin{equation*}
\begin{split}
\mathcal{A}_\delta^{\mathrm{Dir}}(\Omega;\mathbb{R}^d):=\biggl\{\mathcal{A}_\delta(\Omega;\mathbb{R}^d) \colon & u\equiv u_0(\alpha) \mbox{ in }\alpha+[0,\delta)^d \cap\Omega\mbox{ for any }\alpha\in\delta\mathbb{Z}^d\\&\text{ such that }(\alpha+[0,\delta)^d)\cap\dod \neq \emptyset\biggr\}
\end{split}
\end{equation*}
and $\mathcal{A}_\delta^{\mathrm{Dir}}(\Omega;\mathbb{R})$ for real-valued functions, with $u_0$ replaced by the constant function 1.

We introduce the energy functionals $(E_{\lambda,\theta}^{\mathrm{Dir}})_\varepsilon$ and, for every $M>0$, $(E_{\lambda, \theta}\NIM)_\epsilon$ defined for $u$ and $v$ measurable by
\begin{equation*}
(E_{\lambda,\theta}^{\mathrm{Dir}})_\varepsilon(u,v):=
\begin{cases}
\displaystyle E_\varepsilon\ltm(u,v), & \mbox{ if }(u,v)\in\mathcal{A}_\delta^\mathrm{Dir}(\Omega;\mathbb{R}^d)\times\mathcal{A}_\delta^\mathrm{Dir}(\Omega;\mathbb{R})\,,\\
+\infty,&\mbox{ otherwise, }
\end{cases}
\label{eq:mainenergies'}
\end{equation*}
and
\begin{equation}
(E_{\lambda, \theta}\NIM)_\epsilon(u,v){:=}
\begin{cases}
(E_{\lambda, \theta}^{\mathrm{NI}})_\varepsilon(u,v), & \mbox{ if }(u,v)\in\mathcal{A}_\delta(\Omega;\mathbb{R}^d){\times}\mathcal{A}_\delta(\Omega;\mathbb{R})\text{ and }\|u\|_{L^\infty}\leq M\,,\\
+\infty,&\mbox{ otherwise. }
\end{cases}
\label{eq:mainenergiesNI}
\end{equation}

For fixed $\lambda$, $\theta>0$ we consider the Griffith functional $\mathcal{G}\ltms$ defined on $GSBD^2_\infty(\Omega)$ (recall \eqref{eq: general jump}) by
\begin{equation*}
\mathcal{G}\ltms(u):=\lambda\int_\Omega |\mathcal{E}u(x)|^2\,\mathrm{d}x + \left(\frac{\lambda}{2}+\theta\right)\int_\Omega |{\rm div}\,u(x)|^2\,\mathrm{d}x  + \mathcal{H}^{d-1}(J_u\cap \Omega)\,,
\end{equation*}
and its Dirichlet version
\begin{equation*}
\begin{split}
\mathcal{G}\ltms^{\mathrm{Dir}}(u):=\lambda& \int_\Omega |\mathcal{E}u(x)|^2\,\mathrm{d}x + \left(\frac{\lambda}{2}+\theta\right)\int_\Omega |{\rm div}\,u(x)|^2\,\mathrm{d}x  \\& + \mathcal{H}^{d-1}\big((J_{u}\cap \Omega) \cup (\{\mathrm{tr}(u) \neq \mathrm{tr}(u_0)\}\cap \dod)\big)\,.
\end{split}
\end{equation*}
Notice that a more compact expression of the jump part is obtained by considering a set $\wt \Omega \supset \Omega$ with
\begin{equation}\label{eq:tildeOmega}
\wt \Omega \cap \dom =\dod,
\end{equation}
and by extending $u$ to a function $u' \in GSBD^2_\infty(\wt \Omega)$ defined as \begin{equation}\label{eq:u'}
u'=
\begin{dcases}
u \quad & \text{in }\Omega,\\
u_0 \quad & \text{in }\wt \Omega\sm \Omega:
\end{dcases}
\end{equation} 
then 
\begin{equation}\label{1112192224}
J_{u'}=(J_{u}\cap \Omega) \cup (\{\mathrm{tr}(u) \neq \mathrm{tr}(u_0)\}\cap \dod)\,.
\end{equation}
We also set
\begin{equation*}\label{parGriff}
{\widetilde{\mathcal{G}}}\ltms^\mathrm{Dir}(u,v):= 
\begin{dcases}
{\mathcal{G}}\ltms^\mathrm{Dir}(u),\quad & \text{if }u\in GSBD^2(\Omega) \text{ and }v=1 \text{ a.e.\ in }\Omega,\\
+\infty \quad & \text{otherwise}
\end{dcases}
\end{equation*}
and, for every $M>0$,
\begin{equation*}\label{parGriffgenNI}
{\mathcal{G}}_{\lambda,\theta}\NIM(u,v):= 
\begin{dcases}
{\mathcal{G}}_{\lambda,\theta}(u), &\text{if } [u]\cdot \nu \geq 0\ \hd\text{-a.e.\ on }J_u,\, \|u\|_{L^\infty}\leq M,\,v=1 \text{ a.e.\ in } \Omega,\\
+\infty,  & \text{otherwise.}
\end{dcases}
\end{equation*}
Notice that ${\mathcal{G}}\ltms^\mathrm{Dir}(u)={\widetilde{\mathcal{G}}}\ltms^\mathrm{Dir}(\widetilde{u}_t,1)$ for $\Ld$-a.e.\ $t\in \Rd$, by \eqref{eq:same}.
Moreover, ${\mathcal{G}}_{\lambda,\theta}\NIM$ displays a non-interpenetration constraint, not present in ${{\mathcal{G}}}\ltms^\mathrm{Dir}$. We define it directly accounting for an $L^\infty$ bound for $|u|$ at level $M$, for technical reasons. Finally, we do not take into account the role of boundary conditions for the functional with non-interpenetration constraint, since we employ results from \cite{CCF18ARMA} (cf.\ Lemma~\ref{le:CCF18}), where the boundary value problem was not explicitly addressed.

We are now ready to state the main results of the paper. In the following we assume that $u_0$, $\lambda$, $\theta$ are fixed and that $\lim_{\epsilon\to 0} \frac{\delta}{\epsilon}=0$.
\begin{thm}\label{teo:main}
Under the assumptions above, it holds that:
\begin{itemize}
\item[(i)] as $\varepsilon \to 0$, $(E_{\lambda,\theta}^{\mathrm{Dir}})_\varepsilon$ $\Gamma$-converges with respect to the topology of the convergence in measure to ${\widetilde{\mathcal{G}}}\ltms^\mathrm{Dir}$;
\item[(ii)] for $((u_\epsilon, v_\epsilon))_\varepsilon$  such that $\sup_\epsilon (E_{\lambda,\theta}^{\mathrm{Dir}})_\varepsilon(u_\epsilon, v_\epsilon) <+\infty$, there exists $u \in GSBD^2_\infty(\Omega)$ such that $d(u_\varepsilon,u)\to 0$, $v_\varepsilon \to 1$, 
and
\begin{equation}\label{sciGriffith}
{\mathcal{G}}\ltms^\mathrm{Dir}(u,v) \leq \liminf_{\epsilon\to 0}(E_{\lambda,\theta}^{\mathrm{Dir}})_\varepsilon\EEE(u_\epsilon, v_\epsilon)\,.
\end{equation}
\end{itemize}
\end{thm}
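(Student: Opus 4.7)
The theorem is the combination of the three main propositions proved in the subsequent sections: Proposition~\ref{prop:compactness} (compactness), Proposition~\ref{prop:lowerbound} together with Proposition~\ref{lowbound} (bulk and surface liminf), and Proposition~\ref{prop:upperbound} (recovery sequence), patched with the standard extension trick for the Dirichlet datum.

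\emph{Part (ii) (compactness and liminf).} Given $(u_\epsilon,v_\epsilon)$ of equibounded $(E_{\lambda,\theta}^{\mathrm{Dir}})_\epsilon$-energy, I first pick $\wt\Omega\supset\Omega$ as in \eqref{eq:tildeOmega} and extend $u_\epsilon$ to $\wt\Omega$ by the discretization of $u_0$ in the spirit of \eqref{eq:u'}; the additional finite differences introduced on $\wt\Omega\setminus\Omega$ are controlled by $\|u_0\|_{H^1}$, so the extended sequence has equibounded energy on $\wt\Omega$. I then apply Proposition~\ref{prop:compactness} on $\wt\Omega$ to extract a subsequence and a limit $u'\in GSBD^2_\infty(\wt\Omega)$ with $d(u'_\epsilon,u')\to 0$; its restriction $u$ to $\Omega$ is the candidate limit. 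The convergence $v_\epsilon\to 1$ in measure follows from the bound $\frac{1}{\epsilon}\|v_\epsilon-1\|_{L^2(\Omega)}^2\le C$ built into $G_\epsilon(v_\epsilon)$. For the liminf inequality I split $(E^{\mathrm{Dir}}_{\lambda,\theta})_\epsilon=\lambda F_\epsilon+\theta F^{\mathrm{div}}_\epsilon+G_\epsilon$ and argue separately on the two blocks: Proposition~\ref{prop:lowerbound} gives
\[
\liminf_{\epsilon\to 0}\bigl[\lambda F_\epsilon(u_\epsilon,v_\epsilon)+\theta F^{\mathrm{div}}_\epsilon(u_\epsilon,v_\epsilon)\bigr]\ge \lambda\int_\Omega|\mathcal{E}u|^2\dx+\Bigl(\tfrac{\lambda}{2}+\theta\Bigr)\int_\Omega|\diver u|^2\dx,
\]
while Proposition~\ref{lowbound} extracts $\hd(J_{u'}\cap\wt\Omega)$ out of $G_\epsilon(v_\epsilon)$ combined with the $(v_\epsilon)^2$ factors in the elastic part. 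Using \eqref{1112192224} and the fact that $u_0\in H^1(\Rd;\Rd)$ has no jumps in $\wt\Omega\setminus\overline\Omega$, this jump contribution rewrites exactly as $\hd((J_u\cap\Omega)\cup(\{\mathrm{tr}(u)\neq\mathrm{tr}(u_0)\}\cap\dod))$, yielding \eqref{sciGriffith}.

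\emph{Part (i) ($\Gamma$-convergence).} The $\Gamma$-liminf inequality for $\widetilde{\mathcal{G}}\ltms^\mathrm{Dir}$ is an immediate consequence of Part (ii): given $(u_\epsilon,v_\epsilon)\to(u,v)$ in measure with finite liminf of the energy, Part (ii) produces a subsequential limit $u^\star\in GSBD^2_\infty(\Omega)$ with $d(u_\epsilon,u^\star)\to 0$, and uniqueness of limits in measure forces $u^\star=u$, so $u$ is finite valued and $v=1$, hence $\widetilde{\mathcal{G}}\ltms^\mathrm{Dir}(u,v)=\mathcal{G}\ltms^\mathrm{Dir}(u)$; otherwise the inequality is trivial. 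For the $\Gamma$-limsup it suffices to consider $u\in GSBD^2(\Omega)$, $v=1$: I apply Theorem~\ref{thm:density} together with the refinement after \eqref{1412191008} to approximate $u$ by a sequence $u_n\in\mathcal{W}(\Omega;\Rd)$ coinciding with $u_0$ on a neighborhood of $\dod$, with pairwise disjoint closed polyhedral jump set, such that \eqref{convme}, \eqref{convgrad} and \eqref{1412190922} hold. For each such regular $u_n$ Proposition~\ref{prop:upperbound} constructs $(u_{n,\epsilon},v_{n,\epsilon})\in\mathcal{A}_\delta^\mathrm{Dir}(\Omega;\Rd)\times\mathcal{A}_\delta^\mathrm{Dir}(\Omega;\R)$ converging in measure to $(u_n,1)$ and with $\limsup_\epsilon(E_{\lambda,\theta}^\mathrm{Dir})_\epsilon(u_{n,\epsilon},v_{n,\epsilon})\le\mathcal{G}\ltms^\mathrm{Dir}(u_n)$; a diagonal extraction combined with the continuity of $\mathcal{G}\ltms^\mathrm{Dir}$ along the density approximation provides the desired recovery sequence for $u$.

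\emph{Main obstacle.} The delicate step is the surface liminf encapsulated in Proposition~\ref{lowbound}: a bare slicing argument in the directions of $S_d$ would give only an anisotropic surface density with the wrong constant. The refined blow-up needed to recover $\hd(J_u\cap\Omega)$ uses the optimal $1$D Ambrosio--Tortorelli profile, and it is here that the assumption $\delta/\epsilon\to 0$ is invoked to ensure that the discrete profile converges to the continuum one; in $d=3$ this recovery furthermore demands the extra $(e_1\pm e_2\pm e_3)$ interactions encoded in \eqref{setdirec}, which is why $S_3$ must include the NNNN terms. The companion difficulty is handling the possibility that $|A^\infty_u|>0$ along the rescaled blow-up, which is resolved by reapplying Proposition~\ref{prop:compactness} at the blown-up scale, as indicated in the introduction.
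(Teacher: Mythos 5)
Your proposal is correct and follows essentially the paper's own route: the theorem is indeed obtained by extending the data to $\wt\Omega$ via the discretized $u_0$, applying Proposition~\ref{prop:compactness} there, invoking Proposition~\ref{lowbound} for the liminf (which in the paper rests on Lemma~\ref{le:technical1}, Remark~\ref{rem:summation} and Proposition~\ref{prop:lowerbound}), and building the recovery sequence through Theorem~\ref{thm:density} and Proposition~\ref{prop:upperbound}, exactly as you outline. The only slight inaccuracy is attributional: the elastic bulk bound comes from Lemma~\ref{le:technical1} combined with Remark~\ref{rem:summation}, whereas Proposition~\ref{prop:lowerbound} covers only the divergence term; this does not affect the validity of your argument.
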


We remark that any sequence of minimizers $(u_\varepsilon, v_\varepsilon)_\varepsilon$ for $(E_{\lambda,\theta}^{\mathrm{Dir}})_\varepsilon$ satisfies, up to a subsequence, $d(u_\varepsilon, u)\to 0$, for $u \in GSBD^2_\infty(\Omega)$ such that any $(\wt u_t,1)$ minimizes $\widetilde{\mathcal{G}}\ltms^\mathrm{Dir}$ (recall $\wt u_t=u \chi_{\Omega \setminus A^\infty_u} + t \chi_{A^\infty_u}$). In particular, $u_\varepsilon$ converges to $u$ a.e.\ in $\Omega\sm A^\infty_u$ and the bulk energies of $u_\varepsilon$ vanish in $A^\infty_u$ (cf.\ \cite[Theorem~5.8]{CC}). 

\begin{thm}\label{thm:noninter}
Under the assumptions above, for every $M >0$ it holds that:
\begin{itemize}
\item[(i)] ${\mathcal{G}}_{\lambda,\theta}\NIM \leq \Gamma\text{-}\liminf_{\varepsilon\to 0} (E_{\lambda, \theta}\NIM)_\epsilon$;
\item[(ii)] every $((u_\varepsilon, v_\varepsilon))_\varepsilon$ such that $\sup_{\varepsilon}(E_{\lambda, \theta}\NIM)_\epsilon(u_\varepsilon, v_\varepsilon)<+\infty$ converges, up to a subsequence, in $L^1(\Omega;\Rd) \times L^1(\Omega)$ to $(u,1)$ for $u \in SBD^2(\Omega)$;
\item[(iii)] if $d=2$ and $\lim_{\varepsilon\to 0}\frac{\delta}{\varepsilon^2}=0$, then ${\mathcal{G}}_{\lambda,\theta}\NIM \geq \Gamma\text{-}\limsup_{\varepsilon\to 0} (E_{\lambda, \theta}\NIM)_\epsilon$,
\end{itemize}
where the $\Gamma$-$\liminf$ and $\Gamma$-$\limsup$ above are with respect to the strong $L^1(\Omega;\Rd) \times L^1(\Omega)$ topology.
\end{thm}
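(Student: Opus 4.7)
\textbf{Plan for Theorem~\ref{thm:noninter}.}
I would tackle the three parts separately.

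\textit{Part (i): $\Gamma$-liminf.} The plan is to deduce the bulk and surface bounds from the unconstrained case of Theorem~\ref{teo:main}, and then to extract the non-interpenetration inequality from the extra information contained in ${F}^{{\rm div}^-}_\epsilon$. A standard truncation ($v_\epsilon\mapsto 0\vee v_\epsilon\wedge 1$) decreases $G_\epsilon$ and does not increase $F_\epsilon$ or ${F}^{{\rm div}^+}_\epsilon$, so I may assume $0\leq v_\epsilon\leq 1$. Using $|{\rm Div}_\delta u|^2=|{\rm Div}^+_\delta u|^2+|{\rm Div}^-_\delta u|^2$ together with $v^2\leq 1$ I get
\begin{equation*}
{F}^{\mathrm{div},\mathrm{NI}}_\epsilon(u,v)={F}^{{\rm div}^+}_\epsilon(u,v)+{F}^{{\rm div}^-}_\epsilon(u)\geq {F}^{\rm div}_\epsilon(u,v),
\end{equation*}
so that $(E_{\lambda,\theta}\NIM)_\epsilon \geq E_\epsilon\ltm$ on admissible pairs. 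The liminf part of Theorem~\ref{teo:main} then yields $\mathcal{G}\ltms(u) \leq \liminf_\epsilon (E_{\lambda,\theta}\NIM)_\epsilon(u_\epsilon,v_\epsilon)$, and the $L^\infty$-bound guarantees $u\in SBD^2(\Omega)$. To recover NI, the absence of the $v^2$-weight in ${F}^{{\rm div}^-}_\epsilon$ provides a uniform $L^2$-bound on the (suitably rescaled) piecewise-constant interpolant of ${\rm Div}^-_\delta u_\epsilon$. Combining a Lemma~\ref{lemmatransl}-type averaging with the identification of discrete and continuous divergences one finds that $({\rm div}\,u)^-\in L^2(\Omega)$; since $({\rm div}\,u)^s=[u]\cdot\nu_u\,\hd\res J_u$ for $u\in SBD(\Omega)$, this forces $[u]\cdot\nu_u\geq 0$ $\hd$-a.e.\ on $J_u$.

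\textit{Part (ii): compactness.} Using again ${F}^{\mathrm{div},\mathrm{NI}}_\epsilon\geq {F}^{\rm div}_\epsilon$ after truncation, Proposition~\ref{prop:compactness} applies and provides, up to subsequences, $(u_\epsilon,v_\epsilon)\to(u,1)$ with $u\in GSBD^2_\infty(\Omega)$. The uniform bound $\|u_\epsilon\|_{L^\infty}\leq M$ rules out the set $A^\infty_u$, so $u\in L^\infty\cap GSBD^2(\Omega)=SBD^2(\Omega)$, and dominated convergence upgrades the convergence in measure to $L^1(\Omega;\R^d)$; analogously $v_\epsilon\to 1$ in $L^1(\Omega)$ via the Modica--Mortola part.

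\textit{Part (iii): $\Gamma$-limsup in dimension two.} The plan is to adapt the recovery construction of Proposition~\ref{prop:upperbound} so as to handle the NI-term. First, apply the density result of \cite{CCF18ARMA} (Lemma~\ref{le:CCF18}) to approximate any admissible $u$ by piecewise smooth SBD functions $u_n$ with polyhedral jump set, the same $L^\infty$-bound and NI preserved on $J_{u_n}$. For each $u_n$, build $(u_{n,\epsilon},v_{n,\epsilon})$ as in Proposition~\ref{prop:upperbound}: $v_{n,\epsilon}$ follows the Ambrosio--Tortorelli optimal profile in the direction normal to $J_{u_n}$, vanishing on a transition layer of width $\sim\epsilon$, while $u_{n,\epsilon}$ is a lattice sampling of a regularization of $u_n$ at a scale $\eta_\epsilon$ with $\delta\ll\eta_\epsilon\ll\epsilon$ chosen to keep the normal jump positive at the discrete level. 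The new difficulty is that ${F}^{{\rm div}^-}_\epsilon(u_{n,\epsilon})$ is not damped by $v_{n,\epsilon}^2$ inside the crack layer and must be controlled directly: away from the layer the NI of $u_n$ together with a Taylor expansion gives ${\rm Div}^-_\delta u_{n,\epsilon}=O(\delta)$, while inside the layer a second-order cancellation enabled by NI and by the lattice alignment yields an $L^2$ contribution of order $\delta/\epsilon^2$. This is precisely why the stronger scaling $\delta/\epsilon^2\to 0$ has to be imposed. I expect this estimate to be the main obstacle: it requires a careful choice of lattice translation (via Lemma~\ref{lemmatransl}(ii)) ensuring that $J_{u_n}$ meets the lattice cleanly, combined with a subtle use of the NI sign condition to extract the extra order in $\delta$ that the trivial first-order bound does not provide.
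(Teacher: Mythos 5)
Your parts (i) and (ii) follow essentially the paper's route: after observing $(E_{\lambda,\theta}\NIM)_\epsilon \geq E_\epsilon\ltm$ (the paper simply uses $v\le 1$ where you truncate), compactness and the unconstrained liminf of Theorem~\ref{teo:main} give the bulk and surface bounds, and the non-interpenetration of the limit is extracted exactly as in the paper: the undamped term ${F}^{{\rm div}^-}_\epsilon$ yields a uniform $L^2$ bound on (interpolants of) the discrete negative divergence — the paper uses the piecewise affine maps $z_\epsilon$ of \eqref{interp1}, for which ${\rm div}^- z_\epsilon$ coincides cell-wise with the discrete quantity — whose weak $L^2$ limit dominates $\mathrm{Tr}^-(\mathrm{E}u)$, so the singular part of $\mathrm{Div}\,u$ is nonnegative and $[u]\cdot\nu\ge 0$ on $J_u$. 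Up to making precise the comparison between the weak limit and the measure $\mathrm{Div}^-u$ (the paper invokes an argument in the spirit of \cite[Proposition~1.62]{AFP}), this is the same proof.

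Part (iii) contains a genuine gap. First, Lemma~\ref{le:CCF18} is not a density result producing \emph{piecewise smooth} $SBD$ functions with polyhedral jump and the constraint preserved; no such NI-preserving analogue of the Cortesani--Toader approximation is available (this is precisely why the paper imports the construction of \cite{CCF18ARMA} and why it drops boundary data in the NI statement). The lemma instead delivers globally smooth pairs $(u_\varepsilon^{\beta,l},v_\varepsilon^{\beta,l})$, with $u_\varepsilon^{\beta,l}$ a mollification at scale $\varepsilon l$, which already form an (almost) recovery sequence for the \emph{continuum} functional including the undamped term $\int_\Omega|\mathrm{div}^- u_\varepsilon^{\beta,l}|^2\dx$ (item (iv)). Second, your plan to control ${F}^{{\rm div}^-}_\epsilon$ inside the crack layer by a ``second-order cancellation enabled by NI and lattice alignment'' is exactly the hard point, and it is asserted rather than proved; the paper never proves any such discrete cancellation. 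It only has to show that the discrete energies of $(u_\varepsilon^{\beta,l},v_\varepsilon^{\beta,l})$ are asymptotically bounded by their continuum counterparts (estimate \eqref{0812191742}), and this is where $\delta/\varepsilon^2\to 0$ enters: by Remark~\ref{rem:0812191227}, $\|\nabla u_\varepsilon^{\beta,l}\|_{W^{1,\infty}}\lesssim(\varepsilon l)^{-2}$, so the error in replacing difference quotients by derivatives is of order $\delta(\varepsilon l)^{-2}\to 0$; the layer estimate itself is inherited from \cite{CCF18ARMA}. Your variant with an intermediate regularization scale $\eta_\epsilon\ll\epsilon$ would moreover produce second derivatives of order $\eta_\epsilon^{-2}$, so the discretization error would be $\delta/\eta_\epsilon^2$, which the hypothesis $\delta/\varepsilon^2\to 0$ does not control. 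As written, the limsup inequality is not established; to repair it you should follow the paper and discretize the smooth \cite{CCF18ARMA} recovery pair directly (together with the translation argument of Proposition~\ref{prop:upperbound} and the discrete Modica--Mortola estimate around $\Gamma^\beta$), rather than attempt a new discrete estimate across the crack.
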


In Sections~\ref{sec:compprelboun} and \ref{sec:lowerbound} we actually work in the enlarged configuration $\wt \Omega \subset \Rd$ satisfying \eqref{eq:tildeOmega} and with functions $u_\varepsilon$, $v_\varepsilon$ in $\mathcal{A}_\delta^{\mathrm{Dir}}(\wt\Omega;\mathbb{R}^d)$, $\mathcal{A}_\delta^{\mathrm{Dir}}(\wt\Omega;\mathbb{R})$, where
\begin{equation*}
\begin{split}
\mathcal{A}_\delta^{\mathrm{Dir}}(\wt\Omega;\mathbb{R}^d):=\biggl\{\mathcal{A}_\delta(\Omega;\mathbb{R}^d) \colon & u\equiv u_0(\alpha) \mbox{ in }\alpha+[0,\delta)^d \cap\Omega\mbox{ for any }\alpha\in\delta\mathbb{Z}^d\\&\text{ such that }(\alpha+[0,\delta)^d)\cap\wt\Omega\sm \Omega \neq \emptyset\biggr\}
\end{split}
\end{equation*}
and $\mathcal{A}_\delta^{\mathrm{Dir}}(\wt\Omega;\mathbb{R})$ is defined similarly, for $1$ in place of $u_0$ in $\wt\Omega\sm \Omega$.
In particular, if $u_\varepsilon \to \bar{u}$ in $GSBD^2_\infty(\wt\Omega)$ for some $\bar{u}$, then $\bar{u}=u_0$ in $\wt \Omega\sm \Omega$. 
Let us also fix once and for all $\lambda$, $\theta>0$.

\section{Compactness}\label{sec:compprelboun}

In this section we prove a compactness result (Proposition~\ref{prop:compactness}) for the discrete approximations of the Griffith energy, that holds under the assumption that $\frac{\delta}{\epsilon}$ be bounded. We show that sequences $(u_\epsilon,v_\epsilon)_\epsilon$ with equibounded energy $E_\varepsilon\ltm$ are approximated, in the sense of the convergence in measure, by sequences with bounded continuous Griffith energy (for which compactness is known from Theorem~\ref{th: GSDBcompactness}).

\begin{prop}\label{prop:compactness}
Let $\frac{\delta}{\epsilon}$ be bounded as $\epsilon\to0$. Let $(u_\epsilon,v_\epsilon)_\epsilon\subset L^1(\wt\Omega;\mathbb{R}^d)\times L^1(\wt\Omega;\mathbb{R})$ be such that $u_\epsilon\in\mathcal{A}_\delta^{\mathrm{Dir}}(\wt\Omega;\mathbb{R}^d)$, $v_\epsilon\in\mathcal{A}_\delta\Dir(\Omega;\mathbb{R})$ with
\begin{equation}
\sup_\epsilon E_\epsilon\ltm(u_\epsilon,v_\epsilon)<+\infty\,. 
\label{equiboundcomp}
\end{equation}
Then there exist functions $\overline{u}_\varepsilon \in SBD^2(\wt\Omega;\R^d)$ such that
\begin{equation}\label{2210191823}
u_\varepsilon-\overline{u}_\varepsilon \to 0 \quad\mathcal{L}^d\text{-a.e.\ in }\wt\Omega
\end{equation} 
and
\begin{equation}\label{compactbound}
\sup_{\epsilon>0} \Big\{\int_{\wt\Omega} |\mathcal{E}\bar{u}_\epsilon(x)|^2\,\mathrm{d}x + \mathcal{H}^{d-1}(J_{\bar{u}_\epsilon})\Big\}<+\infty\,.
\end{equation}
Moreover, if $\|u_\varepsilon\|_{L^\infty}\leq M$, then $\|\bar{u}_\varepsilon\|_{L^\infty}\leq M$. 
\end{prop}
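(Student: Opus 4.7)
The plan is to reduce the problem to the continuous setting by an Ambrosio-Tortorelli-type construction and then to cut out the region where the phase-field is small. For each $\epsilon$ I would choose a Kuhn-type simplicial subdivision of each lattice cube $\alpha+\delta[0,1]^d$ whose edge vectors are all of the form $\delta\xi$ with $\xi\in S_d$; this requires in dimension $d=3$ precisely the NNNN directions $\pm(e_1\pm e_2\pm e_3)$ that are built into $S_d$. On this mesh let $\hat u_\epsilon$ be the piecewise affine nodal interpolation of $u_\epsilon$, let $\hat v_\epsilon$ be the piecewise affine interpolation of $v_\epsilon$, and let $\tilde v_{\min,\epsilon}$ be the piecewise constant function that on each cell takes the minimum of $v_\epsilon$ over the $2^d$ vertices.

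On each simplex $\mathcal{E}\hat u_\epsilon$ is a constant matrix whose directional derivatives along the edge directions $\xi$ coincide with the discrete differences $D_\delta^\xi u_\epsilon$, so by Remark~\ref{rem:summation} applied with $M=\mathcal{E}\hat u_\epsilon$ (in particular via \eqref{matrix2}) one obtains, simplex by simplex,
\begin{equation*}
\int_{\wt\Omega}\tilde v_{\min,\epsilon}^2\,|\mathcal{E}\hat u_\epsilon|^2\,\mathrm{d}x\;\lesssim\;\lambda\,F_\epsilon(u_\epsilon,v_\epsilon)+\theta\,F^{\mathrm{div}}_\epsilon(u_\epsilon,v_\epsilon)\,,
\end{equation*}
the implicit constant depending only on $d,\lambda,\theta$ and the weights $\sigma_{|\xi|}$. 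The crucial point is that $\tilde v_{\min,\epsilon}$ is dominated by every nodal value $v_\epsilon(\beta)$ appearing in $F_\epsilon^\xi$ and $F^{\mathrm{div}}_\epsilon$, which is what makes the cell-by-cell comparison possible. A parallel routine comparison yields
\begin{equation*}
\epsilon\int_{\wt\Omega}|\nabla\hat v_\epsilon|^2\,\mathrm{d}x+\frac{1}{\epsilon}\int_{\wt\Omega}(\hat v_\epsilon-1)^2\,\mathrm{d}x\;\lesssim\;G_\epsilon(v_\epsilon)+o(1)\,.
\end{equation*}

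For the jump set I would use the standard coarea selection: Cauchy-Schwarz gives $\int_{\wt\Omega}|1-\hat v_\epsilon||\nabla\hat v_\epsilon|\,\mathrm{d}x\leq C$, and the coarea formula for $\hat v_\epsilon$ produces a level $t_\epsilon\in(\tfrac14,\tfrac34)$ with $\mathcal{H}^{d-1}(\{\hat v_\epsilon=t_\epsilon\}\cap\wt\Omega)\leq C$. Setting $B_\epsilon:=\{\hat v_\epsilon<t_\epsilon\}$, I would then define
\begin{equation*}
\bar u_\epsilon:=\hat u_\epsilon\,\chi_{\wt\Omega\sm B_\epsilon}\,,
\end{equation*}
so that $\bar u_\epsilon\in SBD^2(\wt\Omega;\R^d)$ with $J_{\bar u_\epsilon}\subset\partial^*B_\epsilon$, and hence $\mathcal{H}^{d-1}(J_{\bar u_\epsilon})\leq C$. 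Since $\hat v_\epsilon\geq\tfrac14$ outside $B_\epsilon$ and the same bound transfers to $\tilde v_{\min,\epsilon}$ after enlarging $B_\epsilon$ by at most one layer of cells, the elastic inequality above gives $\int_{\wt\Omega}|\mathcal{E}\bar u_\epsilon|^2\,\mathrm{d}x\leq C$. The $L^\infty$ bound is inherited because $\hat u_\epsilon$ is a convex combination of nodal values of $u_\epsilon$.

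Finally, $u_\epsilon-\bar u_\epsilon\to 0$ a.e.\ follows as follows: since $G_\epsilon(v_\epsilon)$ is bounded and $\epsilon\to 0$, $v_\epsilon\to 1$ in $L^2(\wt\Omega)$, so $|B_\epsilon|\to 0$; on $\wt\Omega\sm B_\epsilon$ the difference $\hat u_\epsilon-u_\epsilon$ is a cell-wise affine correction of a piecewise constant function on a mesh of size $\delta$, and combining the $GSBD^2$ compactness of Theorem~\ref{th: GSDBcompactness} applied to $\bar u_\epsilon$ with a diagonal extraction yields a.e.\ convergence along a subsequence. The main obstacle in this plan is the pointwise estimate leading to the first displayed inequality: reconstructing the \emph{full} Euclidean norm $|\mathcal{E}\hat u_\epsilon|^2$ from only the one-dimensional lattice increments available for $\xi\in S_d$ is what forces the inclusion of the NNNN directions in $d=3$, and is also the reason why a naive slicing along the coordinate directions (as in \cite{BBZ,Chambolle1999}) cannot be used in this vector-valued setting, as already stressed in the introduction.
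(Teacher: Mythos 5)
Your overall strategy is the same as the paper's (Freudenthal/Kuhn mesh with edges in $S_d$, interpolants $\hat u_\epsilon$, $\hat v_\epsilon$, the cell-wise minimum $\tilde v_{\min,\epsilon}$, coarea selection of a sublevel set of $\hat v_\epsilon$, and $\bar u_\epsilon=\hat u_\epsilon\chi_{\wt\Omega\sm K_\epsilon}$), but there is a genuine gap at the one step where the hypothesis that $\delta/\epsilon$ stays bounded must enter --- and indeed your argument never uses that hypothesis. The problem is the sentence ``the same bound transfers to $\tilde v_{\min,\epsilon}$ after enlarging $B_\epsilon$ by at most one layer of cells.'' Pointwise the transfer is fine (if a cube has a vertex $\beta$ with $v_\epsilon(\beta)<\tfrac14$, then $\hat v_\epsilon<\tfrac14$ near $\beta$, so that cube meets $B_\epsilon$), but the enlargement destroys the perimeter bound: the coarea estimate controls $\mathcal{H}^{d-1}(\partial^* B_\epsilon)$, not the perimeter of the union of the cells meeting $B_\epsilon$. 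The sublevel set may consist of many tiny components around isolated ``bad'' vertices; the potential term only gives $\#\{\beta:\,v_\epsilon(\beta)\le\tfrac14\}\lesssim \epsilon/\delta^{d}$, so the cell-enlarged set can have boundary measure of order $(\epsilon/\delta^{d})\,\delta^{d-1}=\epsilon/\delta$, which blows up exactly in the regime $\delta\ll\epsilon$ of interest. This is why the paper does not enlarge $K^1_\epsilon=\{\hat v_\epsilon\le\tfrac14\}$ by cells: it adds instead the set $K^\kappa_\epsilon$ of cubes whose node has oscillation at least $\kappa$ towards some neighbour in $S_d$; the discrete gradient part of $G_\epsilon$ bounds the number of such cubes by $C/(\kappa^2\delta^{d-2}\epsilon)$, hence their total boundary by $C\,\delta/(\kappa^2\epsilon)$, which is finite precisely because $\delta/\epsilon$ is bounded. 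Outside $K^1_\epsilon\cup K^\kappa_\epsilon$ one then gets $\tilde v_{\min,\epsilon}\ge\tfrac14-2\kappa$ by the triangle inequality, and the elastic bound follows. Without some substitute for this oscillation argument your construction does not yield $\sup_\epsilon\mathcal{H}^{d-1}(J_{\bar u_\epsilon})<+\infty$.

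A secondary weak point is the proof of $u_\epsilon-\bar u_\epsilon\to0$ $\mathcal{L}^d$-a.e. Applying Theorem~\ref{th: GSDBcompactness} to $\bar u_\epsilon$ and extracting a subsequence tells you nothing about the distance between $u_\epsilon$ and $\bar u_\epsilon$; what is needed is a direct comparison, on the good cells, between the piecewise constant function $u_\epsilon$ and its affine interpolant $\hat u_\epsilon$, controlled in measure by the (weighted) discrete differences. The paper gets this from \cite[Proposition~A.1, Remark~A.2]{AFG}; if you do not invoke such a result you should prove the cell-wise estimate explicitly (note also that the statement asks for convergence of the full sequence, not along a subsequence). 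The rest of your outline --- the use of \eqref{matrix2} on each simplex, the comparison of the Modica--Mortola terms, the coarea/mean-value selection, and the inheritance of the $L^\infty$ bound by convexity --- matches the paper's Step~1 and is fine.
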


\proof

We introduce a suitable triangulation $\mathcal{T}^d_\epsilon$ of $\wt\Omega$, based on the Freudenthal partition $\Sigma_d$ of the $d$-cube (see Fig.~\ref{fig:decomposition}). 
\begin{figure}[htbp]
\centering
\def\svgwidth{150pt}
\begingroup%
  \makeatletter%
  \providecommand\color[2][]{%
    \errmessage{(Inkscape) Color is used for the text in Inkscape, but the package 'color.sty' is not loaded}%
    \renewcommand\color[2][]{}%
  }%
  \providecommand\transparent[1]{%
    \errmessage{(Inkscape) Transparency is used (non-zero) for the text in Inkscape, but the package 'transparent.sty' is not loaded}%
    \renewcommand\transparent[1]{}%
  }%
  \providecommand\rotatebox[2]{#2}%
  \newcommand*\fsize{\dimexpr\f@size pt\relax}%
  \newcommand*\lineheight[1]{\fontsize{\fsize}{#1\fsize}\selectfont}%
  \ifx\svgwidth\undefined%
    \setlength{\unitlength}{192.75bp}%
    \ifx\svgscale\undefined%
      \relax%
    \else%
      \setlength{\unitlength}{\unitlength * \real{\svgscale}}%
    \fi%
  \else%
    \setlength{\unitlength}{\svgwidth}%
  \fi%
  \global\let\svgwidth\undefined%
  \global\let\svgscale\undefined%
  \makeatother%
  \begin{picture}(1,1.08560311)%
    \lineheight{1}%
    \setlength\tabcolsep{0pt}%
    \put(0,0){\includegraphics[width=\unitlength,page=1]{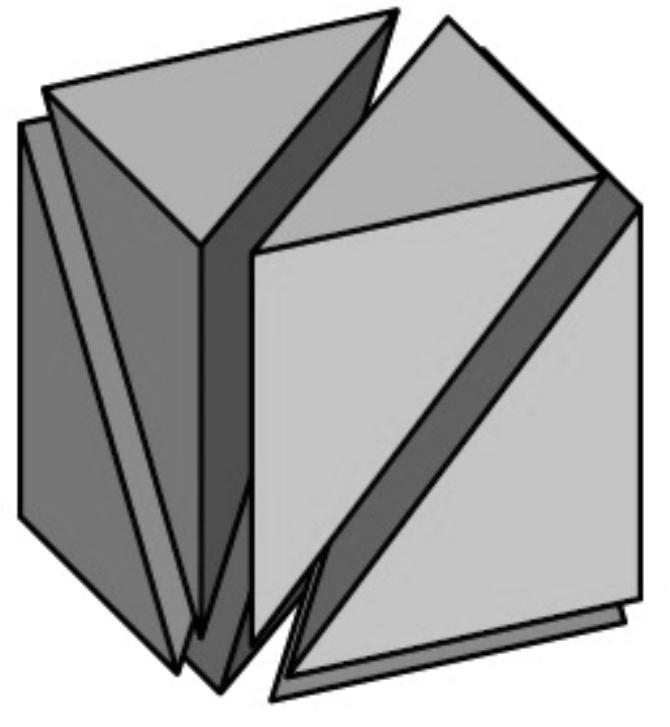}}%
    \put(0.30200091,-0.02840008){\color[rgb]{0,0,0}\makebox(0,0)[lt]{\lineheight{1.25}\smash{\begin{tabular}[t]{l}$\small(0,0,0)$\end{tabular}}}}%
    \put(0.58420168,1.08480314){\color[rgb]{0,0,0}\makebox(0,0)[lt]{\lineheight{1.25}\smash{\begin{tabular}[t]{l}$\small(1,1,1)$\end{tabular}}}}%
  \end{picture}%
\endgroup%

\caption{The Freudenthal decomposition $\Sigma_3$.}\label{fig:decomposition}
\end{figure}
It is defined as the set of all $d$-simplexes $T$ obtained through minimal chains of ordered vertices connecting the origin to the vertex $(1,1,\dots,1)$. They are $d!$ congruent simplexes and each has volume $1/d!$. In the case $d=2$, we choose
\begin{equation*}
\Sigma_2:=\{T_{1,2}, T_{2,2}\}=\{{\rm conv}\{0,e_1,e_1+e_2\}, {\rm conv}\{0,e_2,e_1+e_2\}\}\,,
\end{equation*}
while if $d=3$, the decomposition is given by
\begin{equation*}
\Sigma_3:=\{T_{1,3}, T_{2,3}, T_{3,3}, T_{4,3}, T_{5,3}, T_{6,3}\}\,,
\end{equation*}
where
\begin{equation*}
\begin{split}
&T_{1,3}= {\rm conv}\{0,e_1,e_1+e_2, e_1+e_2+e_3\},\quad T_{2,3}= {\rm conv}\{0,e_1,e_1+e_3, e_1+e_2+e_3\}\\
&T_{3,3}= {\rm conv}\{0,e_2,e_1+e_2, e_1+e_2+e_3\},\quad T_{4,3}= {\rm conv}\{0,e_2,e_2+e_3, e_1+e_2+e_3\}\\
&T_{5,3}= {\rm conv}\{0,e_3,e_1+e_3, e_1+e_2+e_3\},\quad T_{6,3}= {\rm conv}\{0,e_3,e_2+e_3, e_1+e_2+e_3\}\,.
\end{split}
\end{equation*}
For every simplex $T\in\Sigma_d$, we denote by $D_{T}$ the set of the edges directions for $T$, which contains $d(d+1)/2$ linearly independent vectors of $S_d$. 
For any vector $\xi\in\R^d$,  we denote by $\ell_j^{\xi,T}$ the coordinates of $\xi\otimes\xi$ in the basis $\{\tilde{\nu}_j\otimes\tilde{\nu}_j:\, \tilde{\xi}_j\in D_T\}$ of $\R^{d\times d}_{sym}$, where $\tilde{\nu}_j:= \tilde{\xi}_j/|\tilde{\xi}_j|$. 

Finally, we define the triangulation of $\wt\Omega$ induced by the partition $\Sigma_d$ as
\begin{equation*}
\mathcal{T}^d_\epsilon:=\{\alpha+\delta T\colon T\in\Sigma_d,\, \alpha\in\delta\mathbb{Z}^d\cap \wt\Omega\}\,.
\end{equation*}
We then denote by $\hat{u}_\epsilon=(\hat{u}_\epsilon^1,\dots,\hat{u}_\epsilon^d)$ and $\hat{v}_\epsilon$ the piecewise-affine interpolations of $u_\epsilon$ and $v_\epsilon$ on $\mathcal{T}^d_\epsilon$, respectively.
We also consider the piecewise constant functions 
\begin{equation}
\tilde{v}_{min,\epsilon}(x):=\min\{v_\epsilon(\beta),\,\, \beta\in\alpha+\delta([0,1]^d\cap\Z^d)\},\quad \mbox{if }x\in \alpha+[0,\delta)^d\,.
\label{pc4}
\end{equation}

The result will be an immediate consequence of the following crucial claim and of \cite[Proposition~A.1, Remark~A.2]{AFG}, which hold true for any distance inducing the convergence in measure on bounded sets (in particular, for the metric $d(u,v)$ defined in \eqref{eq:metricd}).\\
\\
\noindent
{\bf Claim:} There exists a set $K_\epsilon\subset\wt\Omega$, with
\begin{equation}
\mathcal{H}^{d-1}(\partial^* K_\epsilon)\leq C,\quad |K_\epsilon|\to0
\end{equation}
such that, setting $\bar{u}_\epsilon:= \hat{u}_\epsilon(1-\chi_{K_\epsilon})$, we have that $\overline{u}_\varepsilon$ satisfy \eqref{compactbound}. 
We subdivide the proof of this fact into 
two steps.\\
\noindent
{\bf Step 1:} The preliminary remark is that from the equi-boundedness of the energies \eqref{equiboundcomp} we can get
\begin{align}
&\int_{\wt\Omega} (\tilde{v}_{min,\epsilon}(x))^2 |\mathcal{E}\hat{u}_\epsilon(x)|^2\, \mathrm{d}x\leq C\label{equib1}\\
&\int_{\wt\Omega} |\hat{v}_\epsilon(x)-1||\nabla\hat{v}_\epsilon(x)|\, \mathrm{d}x\leq C\,.\label{equib2}
\end{align}
Let $\eta>0$ be fixed, and consider $\wt\Omega^\eta:=\{x\in\wt\Omega:\, {\rm dist}(x,\mathbb{R}^d \sm \wt\Omega)>\eta\}$. 
Since $\hat{u}_\epsilon$ is the affine interpolation of $u_\epsilon$ on each simplex of partition $\Sigma_d$, we have that
\begin{equation}
\langle(\mathcal{E}\hat{u}_\epsilon){\nu},{\nu}\rangle=\frac{\langle u_\epsilon(s_i)-u_\epsilon(s_j),\nu\rangle}{|s_i-s_j|}
\label{identity}
\end{equation}
for every pair $s_i,s_j$ of vertices of $[0,1]^d$, with $\nu=\frac{s_i-s_j}{|s_i-s_j|}$.

In order to prove \eqref{equib1}, a simple computation based on \eqref{matrix2}, \eqref{pc4} and \eqref{identity} shows that 
\begin{equation*}
\begin{split}
&\int_{\wt\Omega^\eta}(\tilde{v}_{min,\epsilon}(x))^2|\mathcal{E}\hat{u}_\epsilon(x)|^2\,\mathrm{d}x\\
&\leq\sum_{\xi\in S_d}\frac{\sigma_{|\xi|}}{\tilde{c}_{\sigma,d}}\sum_{\alpha\in\delta\mathbb{Z}^d\cap \wt\Omega}\int_{\alpha+\delta[0,1]^d}(\tilde{v}_{min,\epsilon}(x))^2|\langle\mathcal{E}\hat{u}_\epsilon(x)\xi,\xi\rangle|^2\,\mathrm{d}x\\
&=\sum_{\xi\in S_d}\frac{\sigma_{|\xi|}}{\tilde{c}_{\sigma,d}}\sum_{\alpha\in\delta\mathbb{Z}^d\cap \wt\Omega}\sum_{T\in\Sigma_d}\int_{\alpha+\delta T}(\tilde{v}_{min,\epsilon}(x))^2|\langle\mathcal{E}\hat{u}_\epsilon(x)\xi,\xi\rangle|^2\,\mathrm{d}x\\
&=\sum_{\xi\in S_d}\frac{\sigma_{|\xi|}}{\tilde{c}_{\sigma,d}}\sum_{\alpha\in\delta\mathbb{Z}^d\cap \wt\Omega}\sum_{T\in\Sigma_d}\int_{\alpha+\delta T}(\tilde{v}_{min,\epsilon}(x))^2\left|\sum_j\ell_j^{\xi,T}\langle\mathcal{E}\hat{u}_\epsilon(x)\tilde{\nu}_j,\tilde{\nu}_j\rangle\right|^2\,\mathrm{d}x\\
&\leq \sum_{\xi\in S_d}\frac{\sigma_{|\xi|}}{\tilde{c}_{\sigma,d}}\sum_{\alpha\in\delta\mathbb{Z}^d\cap \wt\Omega}\sum_{T\in\Sigma_d}\frac{\delta^{d-2}}{d!}\left|\sum_j\ell_j^{\xi,T}(v_\epsilon(\alpha+\delta s_j))D_\delta^{\tilde{\xi}_j} u_\epsilon(\alpha+\delta s_j)\right|^2\,,
\end{split}
\end{equation*}
where $s_j, s_j+\tilde{\xi}_j$ represent the only two vertices of $T$ whose difference is $\tilde{\xi}_j$. Thus, by simple inequalities we infer that
\begin{equation*}
\int_{\wt\Omega^\eta}(\tilde{v}_{min,\epsilon}(x))^2|\mathcal{E}\hat{u}_\epsilon(x)|^2\,\mathrm{d}x\lesssim F_\epsilon(u_\epsilon,v_\epsilon)\,,
\end{equation*}
whence the assertion easily follows from \eqref{equiboundcomp} and by the arbitrariness of $\eta$.
For what concerns \eqref{equib2}, we notice that $\hat{v}_\epsilon(x)$ can be rewritten on each simplex $\alpha+\delta T$, with vertices $\alpha+\delta \tilde{\xi}_i$, $i=0,1,\dots,d$ (we use here the convention $\alpha+\delta \tilde{\xi}_0:=\alpha$), as
\begin{equation}
\hat{v}_\epsilon(x)=\sum_{i=0}^dp_i(x)v_\epsilon(\alpha+\delta \tilde{\xi}_i),\quad \mbox{ for every $x\in \alpha+\delta T$,}
\label{rewriting}
\end{equation}
for some affine functions $p_i(x),\, i=0,1,\dots,d$ such that $\displaystyle\sum_{i=0}^dp_i(x)=1$. 

We first prove that
\begin{equation}
\int_{\wt\Omega^\eta} \frac{(\hat{v}_\epsilon(x)-1)^2}{\epsilon}+\epsilon|\nabla\hat{v}_\epsilon(x)|^2\,\mathrm{d}x\lesssim G_{\epsilon}(v_\epsilon,\wt\Omega^\eta)
\label{(4.10)}
\end{equation}
for $\delta$ small. Indeed, on the one hand, since $\hat{v}_\epsilon$ is the piecewise affine interpolation of $v_\epsilon$ on each simplex of the decomposition, we deduce that
\begin{equation*}
|\nabla \hat{v}_\epsilon(x)|^2=\frac{1}{\delta^2}\sum_{i=1}^d\left(\Delta_{\delta}^{\tilde{\xi}_i}v_\epsilon(\alpha)\right)^2,\quad \mbox{ for every $x\in\alpha+\delta T$,}
\end{equation*}
so that, by means of elementary inequalities, for $\delta$ sufficiently small we have that 
\begin{equation*}
\int_{\wt\Omega^\eta}\epsilon|\nabla\hat{v}_\epsilon(x)|^2\,\mathrm{d}x\leq C\sum_{\alpha\in\delta\mathbb{Z}^d\cap \wt\Omega} \epsilon\delta^{d-2}\left[\sum_{i=1}^d\left(\Delta_{\delta}^{e_i}v_\epsilon(\alpha)\right)^2\right]\,.
\end{equation*}
On the other hand, rewriting $\hat{v}_\epsilon(x)$ as in \eqref{rewriting} on each symplex $\alpha+\delta T$ for every $\alpha\in\delta\mathbb{Z}^d\cap \wt\Omega$, with the convexity of $z\to(z-1)^2$ we obtain
\begin{align*}
\int_{\alpha+\delta T}\frac{(\hat{v}_\epsilon(x)-1)^2}{\epsilon}\,\mathrm{d}x &= \frac{1}{\epsilon} \int_{\alpha+\delta T}\left(\sum_{i=1}^dp_i(x)v_\epsilon(\alpha+\delta \tilde{\xi}_i)-1\right)^2\,\mathrm{d}x\\
&\leq \frac{1}{\epsilon}\biggl(\sum_{i=0}^d(v_\epsilon(\alpha+\delta \tilde{\xi}_i)-1)^2\int_{\alpha+\delta T} p_i(x)\,\mathrm{d}x\biggr)\\
&=\frac{1}{3d!}\delta^d\sum_{i=0}^d\frac{(v_\epsilon(\alpha+\delta \tilde{\xi}_i)-1)^2}{\epsilon}\,.
\end{align*}
Hence, summing up on all simplices $\alpha+\delta T\in\mathcal{T}^d_\epsilon$ we finally get, for $\delta$ small enough,
\begin{equation}
\sum_{\alpha\in\delta\mathbb{Z}^d\cap \wt\Omega}\delta^d\frac{(v_\epsilon(\alpha)-1)^2}{\epsilon}\,\mathrm{d}x\geq\int_{\wt\Omega^\eta}\frac{(\hat{v}_\epsilon-1)^2}{\epsilon}\,\mathrm{d}x\,.
\label{inteq2}
\end{equation}

Now, as a consequence of \eqref{(4.10)}, \eqref{equiboundcomp} and the Cauchy-Schwarz inequality we deduce that
\begin{equation*}
C \geq \frac{1}{2}\int_{\Omega^\eta} \frac{(\hat{v}_\epsilon(x)-1)^2}{\epsilon}+\epsilon|\nabla\hat{v}_\epsilon(x)|^2\,\mathrm{d}x\geq\int_{\wt\Omega^\eta} |\hat{v}_\epsilon(x)-1||\nabla\hat{v}_\epsilon(x)|\, \mathrm{d}x\,,
\end{equation*}
whence \eqref{equib2} follows by the arbitrariness of $\eta$.\\
\noindent
{\bf Step 2:} We can start with the construction of the set $K_\epsilon$. As a consequence of the coarea formula and \eqref{equib2}, we then have
\begin{equation}
C\geq\int_{\wt\Omega} |\hat{v}_\epsilon(x)-1||\nabla\hat{v}_\epsilon(x)|\, \mathrm{d}x\geq \int_0^1 (1-s) \mathcal{H}^{d-1}(\partial^*\{\hat{v}_\epsilon<s\}\cap\wt\Omega)\,\mathrm{d}s\,,
\label{coarea1}
\end{equation}
whence, by the mean-value theorem, there exists $\bar{s}\in(0,1)$, say $\bar{s}=\frac{1}{4}$, such that
\begin{equation}
\int_0^1 (1-s) \mathcal{H}^{d-1}(\partial^*\{\hat{v}_\epsilon<s\}\cap\Omega)\,\mathrm{d}s\geq \frac{3}{4} \mathcal{H}^{d-1}(\partial^*K_\epsilon^1)\,,
\label{meanv}
\end{equation}
where we have set 
\begin{equation*}
K_\epsilon^1:=\left\{x\in\wt\Omega:\, \hat{v}_\epsilon(x)\leq\frac{1}{4}\right\}\,.
\end{equation*}
Thus, with \eqref{coarea1} and \eqref{meanv} we deduce that
\begin{equation}
\mathcal{H}^{d-1}(\partial^*K_\epsilon^1)\leq C\,.
\label{per1}
\end{equation}
Furthermore, again by the equi-boundedness of the energies and \eqref{inteq2}, we have
\begin{equation}
|K_\epsilon^1|\leq \left(\frac{1}{\epsilon}\int_{\wt\Omega} (\hat{v}_\epsilon(x)-1)^2\,\mathrm{d}x\right)\epsilon\leq C\epsilon\to0\,.
\label{area1}
\end{equation}

Now, with $\kappa>0$ 
fixed, we consider the set
\begin{equation*}
\mathcal{I}_{\epsilon,\delta}^\kappa:=\left\{\alpha\in\delta\mathbb{Z}^d\cap\wt\Omega:\, \max_{\xi\in S_d}\{|v_\epsilon(\alpha)-v_\epsilon(\alpha\pm\delta \xi)|\}\geq\kappa\right\},
\end{equation*}
and, denoting by $Q_\alpha$ the cube $\alpha+[0,\delta)^d$, we correspondingly define
\begin{equation*}
K_\epsilon^\kappa:= \bigcup_{\alpha\in \mathcal{I}_{\epsilon,\delta}^\kappa}Q_\alpha\,.
\end{equation*}
Notice that, if $\alpha\in \mathcal{I}_{\epsilon,\delta}^\kappa$, then by the triangle inequality there exists $\beta\in \alpha+\delta([-1,1]^d\cap\Z^d)$ such that
\begin{equation}
\max\{|v_\epsilon(\beta)-v_\epsilon(\beta+\delta e_j)|,|v_\epsilon(\beta)-v_\epsilon(\beta-\delta e_j)|\}\geq\frac{\kappa}{d}\,,\quad j=1,\dots,d\,.
\label{condbeta}
\end{equation}
Since different $\alpha',\alpha''\in \mathcal{I}_{\epsilon,\delta}^\sigma$ may share the same $\beta$ complying with \eqref{condbeta} if and only if $\alpha'-\alpha''\in\delta([-2,2]^d\cap\Z^d)$, then
\begin{equation}
\#\{\beta:\, \mbox{\eqref{condbeta} holds} \}\geq\frac{\#(\mathcal{I}_{\epsilon,\delta}^\kappa)}{\#([-2,2]^d\cap\Z^d)}\,.
\label{stimbeta}
\end{equation}
From \eqref{equiboundcomp}, the definition of $\mathcal{I}_{\epsilon,\delta}^\kappa$, and \eqref{stimbeta} we then infer that
\begin{equation}
\begin{split}
C\geq G_\epsilon(v_\epsilon)&\geq \sum_{\alpha\in\mathcal{I}_{\epsilon,\delta}^\kappa}\epsilon \delta^d \left(\sum_{j=1}^d\left|\frac{v_\epsilon(\alpha)-v_\epsilon(\alpha\pm\delta e_j)}{\delta}\right|^2\right)\\
 &\geq \frac{\epsilon\kappa^2\delta^{d-2}}{d^2\#([-2,2]^d\cap\Z^d)}\#(\mathcal{I}_{\epsilon,\delta}^\kappa)\,,
\end{split}
\label{boundsmallset}
\end{equation}
whence
\begin{equation*}
\#(\mathcal{I}_{\epsilon,\delta}^\kappa)\leq \frac{C}{\kappa^2\delta^{d-2}\epsilon}\,.
\end{equation*}
Consequently, taking into account the boundedness of the ratio $\frac{\delta}{\epsilon}$, we have
\begin{equation}
\begin{split}
\mathcal{H}^{d-1}(\partial^*K_\epsilon^\kappa)&\leq \sum_{\alpha\in\mathcal{I}_{\epsilon,\delta}^\kappa}\mathcal{H}^{d-1}(\partial^*Q_\alpha)=2d\delta^{d-1} \#(\mathcal{I}_{\epsilon,\delta}^\kappa)\leq \frac{C}{\sigma^2}\frac{\delta}{\epsilon}<+\infty\,,\\
|K_\epsilon^\kappa|&\leq \sum_{\alpha\in\mathcal{I}_{\epsilon,\delta}^\kappa}|Q_\alpha|=\delta^d\#(\mathcal{I}_{\epsilon,\delta}^\kappa)\leq \left(\frac{C}{\kappa^2}\frac{\delta}{\epsilon}\right)\delta\to0\,.
\end{split}
\label{perarea2}
\end{equation}
Hence, setting
\begin{equation*}
K_{\epsilon,\kappa}:= K_\epsilon^1\cup K_\epsilon^\kappa\,,
\end{equation*}
with \eqref{per1}, \eqref{area1} and \eqref{perarea2} we find that
\begin{equation*}
\mathcal{H}^{d-1}(\partial^* K_{\epsilon,\kappa})\leq C \mbox{ and } |K_{\epsilon,\kappa}|\to0\,.
\end{equation*}

It will be sufficient to show that, for every fixed $\kappa>0$,
\begin{equation}
\wt\Omega\sm K_{\epsilon,\kappa}\subseteq\left\{x\in\Omega:\, \tilde{v}_{min,\epsilon}(x)\geq\frac{1}{4}-2\kappa\right\}.
\label{inclusion}
\end{equation}
Indeed, choosing, e.g., $\kappa=\frac{1}{16}$ and setting $K_\epsilon:=K_{\epsilon,\frac{1}{16}}$, \eqref{inclusion} and \eqref{equib1} allow us to deduce a uniform bound for $\|\mathcal{E}\hat{u}_\epsilon\|_{L^2}$ outside the set $K_\epsilon$; namely, 
\begin{equation}
\int_{\wt\Omega\backslash K_\epsilon}|\mathcal{E}\hat{u}_\epsilon(x)|^2\, \mathrm{d}x \leq 64 \int_{\wt\Omega} (\tilde{v}_{min,\epsilon}(x))^2 |\mathcal{E}\hat{u}_\epsilon(x)|^2\, \mathrm{d}x\leq C\,.
\label{extbound}
\end{equation}
In order to prove \eqref{inclusion}, let $x\in\wt\Omega\backslash K_{\epsilon,\kappa}$ and $\alpha\in\delta\mathbb{Z}^d\cap\wt\Omega$ be such that $x\in Q_\alpha$. Since $\hat{v}_\epsilon(x)\geq\frac{1}{4}$, it must be
\begin{equation*}
\max\biggl\{v_\epsilon(\alpha), \{v_\epsilon(\alpha\pm\delta \xi)\}_{\xi\in S_d}\biggr\}\geq \frac{1}{4}\,.
\end{equation*}
Now, $\alpha\not\in\mathcal{I}_{\epsilon,\delta}^\kappa$, so that $v_\epsilon(\alpha)\geq \frac{1}{4}-\kappa$ and, by triangle inequality, $\tilde{v}_{min,\epsilon}(x)\geq \frac{1}{4}-2\kappa$ as desired.

Finally, setting $\bar{u}_\epsilon:= \hat{u}_\epsilon(1-\chi_{K_\epsilon})$, we notice that $J_{\bar{u}_\epsilon}=\partial^*K_\epsilon$ so that, taking into account \eqref{extbound} and $\mathcal{H}^{d-1}(\partial^* K_\epsilon)<+\infty$ we obtain \eqref{compactbound}. By the way, it is immediate to see that $\|\bar{u}_\varepsilon\|_{L^\infty} \leq \|u_\varepsilon\|_{L^\infty}$.
  This concludes the proof of Claim and then of the theorem.
\endproof

\section{Semicontinuity properties for the Griffith energy}\label{sec:lowerbound}
This section is devoted to prove the semicontinuity inequality \eqref{sciGriffith} in Theorem~\ref{teo:main}, assuming the convergence of $u_\varepsilon$ to $u$ guaranteed in Section~\ref{sec:compprelboun} on sequences with bounded approximating energies.
In particular, we deduce the lower limit inequality for the $\Gamma$-convergence approximation of the classic Griffith energy, with Dirichlet boundary conditions.

As in Section~\ref{sec:compprelboun}, we work with the extended set $\wt \Omega\subset \Rd$, $d\in\{2,3\}$, and functions in $\mathcal{A}^\mathrm{Dir}_\delta(\wt\Omega;\Rd)$, $\mathcal{A}^\mathrm{Dir}_\delta(\wt\Omega;\R)$.
As observed in Section~\ref{sec:discrmodel}, if $u_\varepsilon \in  \mathcal{A}_\delta\Dir(\wt\Omega;\Rd)$ are such that $u_\varepsilon \to \bar{u}$ a.e.\ in $\wt \Omega$, then $\bar{u}=u_0$ in $\wt \Omega\sm\Omega$.
Then (recall the definition of $u'$ \eqref{eq:u'} and \eqref{1112192224}), prove the lower limit inequality for $(E\ltms\Dir)_\varepsilon$ is equivalent to prove the lower inequality for the energies $(\wt E\ltms\Dir)_\varepsilon$ defined in the very same way of $(E\ltms\Dir)_\varepsilon$, but with all the integrals and corresponding notation considered in $\wt\Omega$ in place of $\Omega$.
To ease the reading, in the following we keep the same notation of Section~\ref{sec:discrmodel} for the functionals, just referring to the set $\wt\Omega$ in place of $\Omega$ in integrals, in sets of nodes, and in $\mathcal{A}_\delta\Dir(\wt\Omega;\Rd)$, $\mathcal{A}_\delta\Dir(\wt\Omega;\R)$.

We estimate separately from below the terms $F_\varepsilon$ and $F_\varepsilon^{\mathrm{div}}$ (Lemma~\ref{le:technical1}, Lemma~\ref{le:technical2}, and Proposition~\ref{prop:lowerbound}), and then address in Proposition~\ref{lowbound} the
 lower bound for the Modica-Mortola part $G_\varepsilon$, by a blow-up argument.
We remark that the results concerning $F_\varepsilon$ and $F_\varepsilon^{\mathrm{div}}$ hold under the only assumption that $\delta=\delta(\epsilon)$ vanishes as $\varepsilon \to 0$. In contrast, we use the assumption $\lim_{\epsilon\to0}\frac{\delta}{\epsilon}=0$ to estimate the Modica-Mortola terms from below in Step~3 of Proposition~\ref{lowbound}.

\begin{lem}\label{le:technical1}
Let $u_\epsilon\in\mathcal{A}_\delta\Dir(\wt\Omega;\mathbb{R}^d)$, $v_\epsilon\in\mathcal{A}_\delta\Dir(\wt\Omega;\mathbb{R})$ be such that 
\begin{equation}
\sup_\epsilon (E^{\mathrm{Dir}}\ltms)_\epsilon(u_\epsilon,v_\epsilon)<+\infty\,,
\label{equiboundedness}
\end{equation}
$d(u_\epsilon, u) \to 0$, with  $u \in GSBD^2_\infty(\wt \Omega)$, and $v_\epsilon \to 1$ in $L^2(\wt \Omega)$.
Then, for every $\xi\in S_d$,
\begin{equation}
\begin{split}
\mathop{\lim\inf}_{\epsilon\to0} F^\xi_\epsilon(u_\epsilon,v_\epsilon)&\geq \frac{1}{|\xi|^4}\int_{\wt \Omega} |\langle\mathcal{E}u(x)\xi,\xi\rangle|^2\,\mathrm{d}x\,.
\end{split}
\label{bulkpart1}
\end{equation}
\end{lem}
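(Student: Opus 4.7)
The plan is to reduce \eqref{bulkpart1} to a one-dimensional lower-semicontinuity statement on slices parallel to $\xi$, in the spirit of the classical sliced approach to discrete Ambrosio--Tortorelli energies. Since every $\xi\in S_d$ is a primitive integer vector, the lattice $\delta\Z^d$ decomposes as a disjoint union of chains $\beta+\delta\Z\xi$, parameterised by $\beta\in\delta\Lambda_\xi$ for a sublattice $\Lambda_\xi\subset\Z^d$ complementary to $\Z\xi$. Introducing the 1D slice functions $u_\varepsilon^{\xi,\beta}(t):=\langle u_\varepsilon(\beta+t\xi),\xi\rangle$ and $v_\varepsilon^{\xi,\beta}(t):=v_\varepsilon(\beta+t\xi)$, and noting that $D_\delta^\xi u_\varepsilon(\beta+k\delta\xi)=|\xi|^{-2}\bigl(u_\varepsilon^{\xi,\beta}((k{+}1)\delta)-u_\varepsilon^{\xi,\beta}(k\delta)\bigr)$, a rearrangement gives
\[
F_\varepsilon^\xi(u_\varepsilon,v_\varepsilon)\;\simeq\;\frac{1}{|\xi|^4}\sum_{\beta\in\delta\Lambda_\xi}\delta^{d-1}\Big[\sum_k \delta\,(v_\varepsilon^{\xi,\beta}(k\delta))^2\,\Big|\tfrac{u_\varepsilon^{\xi,\beta}((k{+}1)\delta)-u_\varepsilon^{\xi,\beta}(k\delta)}{\delta}\Big|^2\Big],
\]
up to a negligible symmetric $-\xi$ contribution. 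The outer $\delta^{d-1}$-weighted sum is a Riemann-type sum that, by a change of variables between $\delta\Lambda_\xi$ and the cross section $\Pi^\xi$, approximates $|\xi|\int_{\Pi^\xi\cap\wt\Omega}(\,\cdot\,)\,d\mathcal H^{d-1}(y)$; this factor $|\xi|$ matches exactly the arclength Jacobian in the Fubini identity $\int f\,dx=|\xi|\int_{\Pi^\xi}\int f(y+t\xi)\,dt\,d\mathcal H^{d-1}(y)$, so all geometric factors combine to produce the target $\frac{1}{|\xi|^4}\int|\langle \mathcal{E}u\xi,\xi\rangle|^2\,dx$.

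From $d(u_\varepsilon,u)\to 0$ and $v_\varepsilon\to 1$ in $L^2(\wt\Omega)$, a Fubini argument together with a diagonal extraction yields, for $\mathcal H^{d-1}$-a.e.\ base point $\beta$, the convergences $u_\varepsilon^{\xi,\beta}\to u^{\xi,\beta}$ in measure on $\wt\Omega^\xi_\beta$ and $v_\varepsilon^{\xi,\beta}\to 1$ a.e.\ on $\wt\Omega^\xi_\beta$. By the slicing theory of \cite[Theorem~9.1]{DM2013} together with \eqref{eq: general jump}--\eqref{eq:same}, the slice $u^{\xi,\beta}$ lies in $SBV^2_{\mathrm{loc}}\bigl(\wt\Omega^\xi_\beta\setminus (A^\infty_u)^\xi_\beta\bigr)$ and satisfies $\dot u^{\xi,\beta}(t)=\langle \mathcal Eu(\beta+t\xi)\xi,\xi\rangle$ a.e. The 1D lower bound on each slice is then obtained as follows: for an arbitrary open $A\subset\subset\wt\Omega^\xi_\beta\setminus\bigl((A^\infty_u)^\xi_\beta\cup J_u^{\xi,\beta}\bigr)$, on which $u^{\xi,\beta}\in H^1(A)$, I replace $v_\varepsilon^{\xi,\beta}$ by $\min\{v_\varepsilon^{\xi,\beta},2\}$ (which decreases the energy and preserves all convergences) and apply Lemma~\ref{lemma2} with $f_\varepsilon=v_\varepsilon^{\xi,\beta}$ and $g_\varepsilon$ the piecewise-constant interpolation of the discrete difference quotient. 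Since the slice energy is finite on $A$ for a.e.\ $\beta$ (Fatou applied to the outer integrated sum), $g_\varepsilon$ is bounded in $L^2(A)$; combined with $u_\varepsilon^{\xi,\beta}\to u^{\xi,\beta}$ in $L^1_{\mathrm{loc}}(A)$, this gives $g_\varepsilon\rightharpoonup \dot u^{\xi,\beta}$ weakly in $L^2(A)$. Lemma~\ref{lemma2} then yields $\liminf_\varepsilon\int_A(v_\varepsilon^{\xi,\beta}g_\varepsilon)^2\,dt \geq \int_A|\dot u^{\xi,\beta}|^2\,dt$.

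Exhausting the regular part of each slice by such sets $A$, and using that $\mathcal Eu=0$ on $A^\infty_u$, the slicewise lower bound becomes $\liminf_\varepsilon(\text{1D energy on chain }\beta)\geq \int_{\wt\Omega^\xi_\beta}|\dot u^{\xi,\beta}(t)|^2\,dt$. A final application of Fatou in $\beta$ against the Riemann-type sum identified above delivers \eqref{bulkpart1}.

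The main technical obstacle is precisely the weak $L^2(A)$ convergence $g_\varepsilon\rightharpoonup \dot u^{\xi,\beta}$ on open subsets $A$ of the regular part of each slice. One has to simultaneously avoid the jump set $J_u^{\xi,\beta}$ and the slice $(A^\infty_u)^\xi_\beta$ of the infinity set of $u$, while reconciling convergence in measure of $u_\varepsilon^{\xi,\beta}$ (possibly to $\infty$) with $L^2$-boundedness of the finite-difference quotient. The crucial ingredient here is the $L^2$-convergence $v_\varepsilon\to 1$ forced by the Modica--Mortola term of $(E_{\lambda,\theta}^{\mathrm{Dir}})_\varepsilon$, which allows Lemma~\ref{lemma2} to upgrade the weighted discrete energy into a genuine $L^2$-norm that admits weak lower semicontinuity.
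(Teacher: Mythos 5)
Your overall strategy (splitting into sublattices/chains parallel to $\xi$, rewriting $F^\xi_\epsilon$ as a slicewise one\hbox{-}dimensional energy, proving a 1D lower bound and integrating with Fatou) is the same as the paper's. But there is a genuine gap at the heart of the 1D step. You take an open set $A\subset\subset \wt\Omega^\xi_\beta\setminus\bigl(J_u^{\xi,\beta}\cup (A^\infty_u)^\xi_\beta\bigr)$ and claim that finiteness of the weighted slice energy $\int_A (v_\epsilon^{\xi,\beta})^2 g_\epsilon^2\,\mathrm{d}t$ gives an $L^2(A)$ bound on the difference quotients $g_\epsilon$, hence $g_\epsilon\rightharpoonup \dot u^{\xi,\beta}$ and then Lemma~\ref{lemma2} applies. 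This does not follow: the weight $v_\epsilon^{\xi,\beta}$ may degenerate to $0$ at points \emph{inside} $A$, because the excluded set is defined through the \emph{limit} $u$, while the phase field of the sequence can collapse wherever $u_\epsilon$ develops transient jumps (e.g.\ two opposite jumps at distance $O(\delta)$) that disappear in the limit. There the unweighted quotients are not $L^2$-bounded, $g_\epsilon$ can carry concentrations, and the weak convergence you need for Lemma~\ref{lemma2} fails; note also that $v_\epsilon\to1$ in $L^2$ is far too weak to prevent this, since the Ambrosio--Tortorelli structure precisely allows $v_\epsilon$ to dip to $0$ on sets of measure $O(\epsilon)$ at arbitrary locations.

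The missing ingredient is a control of the degenerate set of $v_\epsilon$ along each slice, which must come from the one\hbox{-}dimensional Modica--Mortola bound \eqref{Cbound} inherited by the slices: this is exactly Lemma~\ref{Lemma1D}, which shows that the set $I^{\xi,y}$ of points where $v_\epsilon$ can asymptotically vanish is \emph{finite}, and that on any $A\subset\subset \wt\Omega^\xi_\beta\setminus I^{\xi,y}$ one has $v_\epsilon\geq \kappa'>0$ for $\epsilon$ small. Only then does the weighted bound upgrade to an $H^1(A)$ bound on the interpolated slices. Even after this, one more step is needed that your argument skips: on the finitely many intervals determined by $I^{\xi,y}$ the slices may either converge weakly in $H^1_{\mathrm{loc}}$ or diverge to infinity (the dichotomy used in the paper, following \cite{CC18Comp}); the divergent intervals must be identified with slices of $A^\infty_u$, where one uses $\mathcal{E}u=0$, before Lemma~\ref{lemma2} and Fatou can be applied to conclude \eqref{bulkpart1}. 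Incorporating Lemma~\ref{Lemma1D} and this dichotomy would repair your proof and essentially reproduce the paper's argument.
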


\proof

For simplicity, we develop the proof in dimension $d=3$, although the following slicing argument would hold in any dimension $d\geq2$. Let $\xi\in S_3$ be fixed, and $\{\xi_1,\xi_2,\xi_3\}$ be an orthogonal basis of $\R^3$ such that $\xi_i\in\Z^3$ for every $i=1,2,3$ and $\xi_1=\xi$. Setting $Q_\xi:=\sum_{i=1}^3[0,1)\xi_i$, we note that $M_\xi:=|Q_\xi|={\rm det}(\xi_1,\xi_2,\xi_3)$ and $M_\xi\in\Z$. If we denote by $z_l$ the points of $\Pi^\xi$ such that
\begin{equation*}
\{z_l:\, l=1,\dots,M_\xi\}:=\Z^3\cap Q_\xi,
\end{equation*}
we can split $\Z^3$ into the union of disjoint copies of $Z^\xi:=\bigoplus_{i=1}^3\Z\xi_i$ as
\begin{equation*}
\Z^3=\bigcup_{l=1}^{M_\xi}Z^{\xi,l}:=\bigcup_{l=1}^{M_\xi}(z_l+Z^\xi)
\end{equation*}
(see the proof of \cite[Theorem~4.1]{BG} and Figure~\ref{fig:lattices}, in the sample case of $\xi=e_1+e_2+e_3$). 
\begin{figure}[htbp]
\begin{minipage}[c]{0.45\linewidth}
\includegraphics[width=\linewidth]{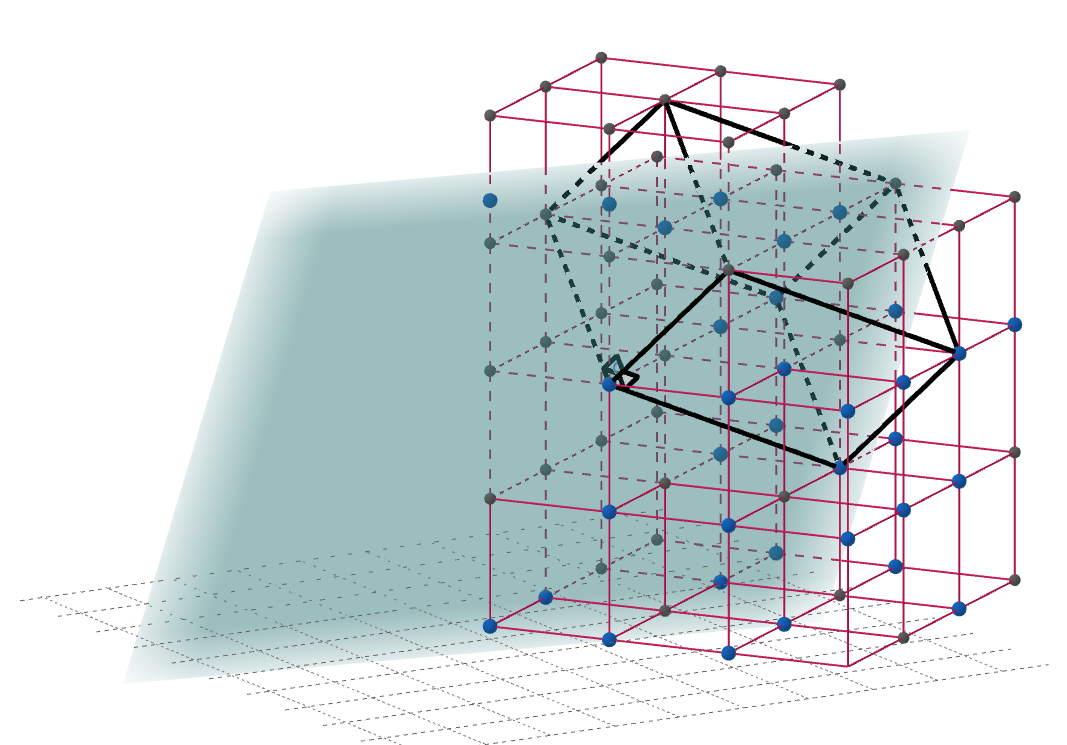}
\end{minipage}
\begin{minipage}[c]{0.45\linewidth}
\includegraphics[width=\linewidth]{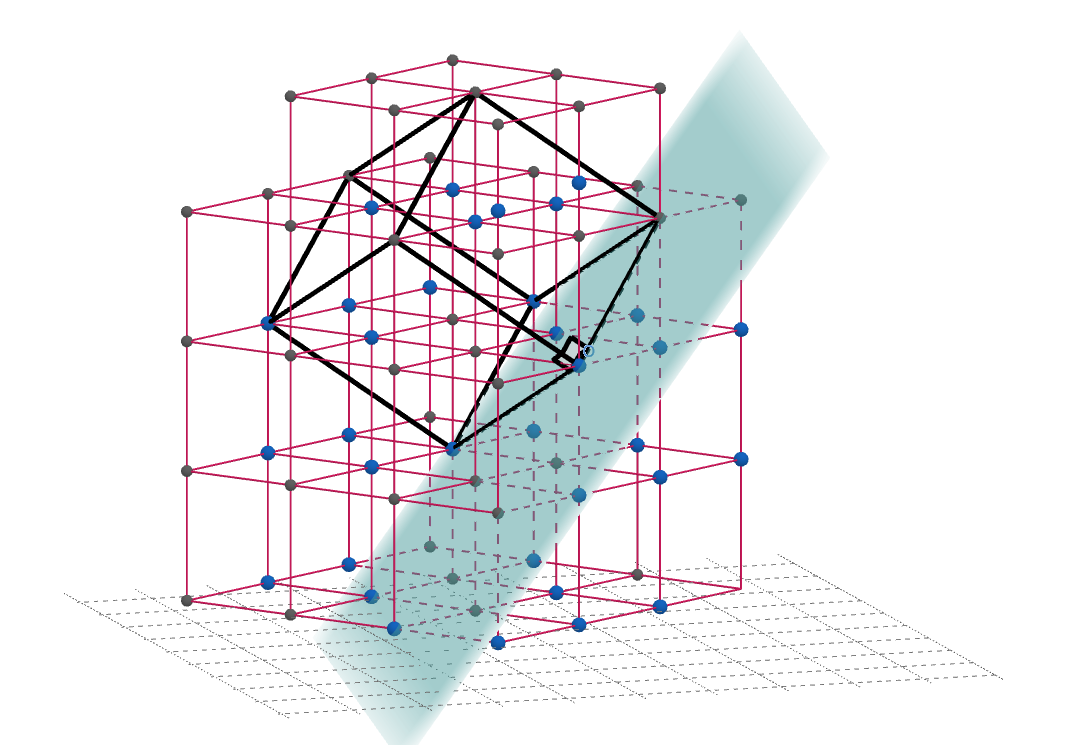}
\end{minipage}
\caption{The lattice corresponding to $\xi=e_1+e_2+e_3$, with the plane $\Pi^\xi$, from two different points of view. Notice that the main sidelengths are $\sqrt{3}$, $\sqrt{2}$, $\sqrt{6}$, so that $M_\xi=6$.}\label{fig:lattices}
\end{figure}

We claim that
\begin{equation}
\mathop{\lim\inf}_{\epsilon\to0}\sum_{\alpha\in Z_{\delta}^{l}(\wt\Omega)} \delta (v_\epsilon(\alpha))^2 (\langle u_\epsilon(\alpha+\delta\zeta)-u_\epsilon(\alpha),\zeta\rangle)^2\geq \frac{1}{M_\xi}\int_{\wt\Omega}(\langle\mathcal{E}u(x)\xi,\xi\rangle)^2\,\mathrm{d}x
\label{secclaim}
\end{equation}
for $\zeta=\pm\xi$ and for every $l=1,\dots,M_\xi$, where $Z_{\delta}^{l}(\wt\Omega):=R_\delta^{\xi}(\wt\Omega)\cap\delta Z^{\xi,l}$. The conclusion \eqref{bulkpart1} will follow up to multiplying by $\frac{1}{|\xi|^4}$ both the sides of \eqref{secclaim} and summing up over  the sublattices.

In order to prove \eqref{secclaim}, we introduce two other  piecewise constant interpolations  $\tilde{u}_\epsilon$ and $\tilde{v}_\epsilon$ of $u_\epsilon$ and $v_\epsilon$, respectively. For  $\alpha \in Z_{\delta}^l(\wt\Omega)$ and $Q_\xi$ as before, we set 
\begin{equation}
\tilde{u}_\epsilon(x):=u_\epsilon(\alpha)\,, \qquad \tilde{v}_\epsilon(x):=v_\epsilon(\alpha),\quad x\in\alpha+\delta Q_\xi\,.
\label{firstinterp1}
\end{equation}
The triangular inequality implies that $\tilde v_\e \to 1$ in $L^1(\wt \Omega)$. We also have that $d(\tilde u_\e, u)\to 0$. This follows from the fact that $u_\epsilon - \tilde{u}_\epsilon \to 0$ in measure. To see this, set $\tilde{g}^\zeta_\e=\arctan(\langle\tilde{u}_\epsilon , \zeta\rangle)$,  $g^\zeta_\e=\arctan(\langle u_\epsilon , \zeta\rangle)$, $\zeta\in\{e_1,e_2,e_3\}$. We have by definition of the interpolants that
\[
\begin{split}
&\int_{\alpha +\delta Q_\xi}|\tilde{g}^\zeta_\e(x)-g^\zeta_\e(x)|\,\mathrm{d}x =\int_{\alpha +\delta Q_\xi}|\tilde{g}^\zeta_\e(\alpha)-g^\zeta_\e(x)|\,\mathrm{d}x\\
&\le \sum_{i=1}^{N_\xi}\int_{\alpha +\delta Q_\xi}|g^\zeta_\e(x-\delta \psi_i)-g^\zeta_\e(x)|\,\mathrm{d}x \,,
\end{split}
\]
where $N_\xi$ is finite depending on $\xi$ and $\psi_i$  are the vectors connecting $\alpha$ with the $N_\xi$ remaining integer vertices in $Q_\xi$. We now observe two facts: 1) from Proposition~\ref{prop:compactness} we have that there exist $\overline{u}_\epsilon$ with $\overline{u}_\epsilon-u_\epsilon \to 0$ in measure and $\overline{u}_\epsilon \to u$ weakly in $GSBD^2_\infty(\wt \Omega)$;
2) arguing for any fixed $\zeta=e_i$ as in \cite[proof of Theorem~1.1, Compactness]{CC18Comp} we have that $\arctan(\langle\overline{u}_\epsilon,\zeta\rangle)$ is compact in $L^1(\wt \Omega)$ (in fact, in $\wt \Omega \sm A^\infty_u$, $\arctan(\langle\overline{u}_\epsilon,\zeta\rangle)\to\arctan(\langle u , \zeta\rangle)$ for any $\zeta \in \mathbb{S}^2$, and, in $A^\infty_u$, $|\arctan(\langle\overline{u}_\epsilon , \zeta\rangle)|\to \frac{\pi}{2}$ for $\mathcal{H}^2$-a.e.\ $\zeta \in \mathbb{S}^{2}$, but the limit exists for any $\zeta$). 
Then $g^\zeta_\epsilon$ is compact in $L^1(\wt\Omega)$ so that, summing up on all $\alpha$'s   in $Z_{\delta}(\wt\Omega)$ and using the Fr\'echet-Kolomogorov criterion, we get $\tilde{g}^\zeta_\epsilon - g^\zeta_\epsilon \to 0$ in  $L^1(\wt\Omega)$. Hence, the claim is proved.

We define $\wt\Omega^\eta$ as the set of $x \in \tilde \Omega$ whose distance from $\partial \wt \Omega$ is at least $\eta$. Setting $\wt\Omega^l_\delta:=\bigcup_{\alpha \in Z_{\delta}^l}(\alpha+\delta Q_\xi)$, we clearly have that  $\wt\Omega^\eta \subseteq \wt\Omega^l_\delta$ for $\delta$ small enough. Furthermore (we argue for $\zeta=\xi$ in \eqref{secclaim}, the case $\zeta=-\xi$ is analogous) 
\begin{equation}\label{eq: rewr}
\begin{split}
&\sum_{\alpha\in Z_{\delta}^{l}(\wt\Omega)} \delta (v_\epsilon(\alpha))^2 (\langle u_\epsilon(\alpha+\delta \xi)-u_\epsilon(\alpha),\xi\rangle)^2 \\
&=\frac{1}{\delta^2 M_\xi}\int_{\wt\Omega^l_\delta} (\tilde{v}_\epsilon(x))^2 (\langle \tilde{u}_\epsilon(x+\delta\xi)-\tilde{u}_\epsilon(x),\xi\rangle)^2\,\mathrm{d}x\\
&=\frac{1}{M_\xi}\int_{\pi^\xi(\wt\Omega^l_\delta)}\int_{(\wt\Omega^l_\delta)_{\xi,y}}(\tilde{v}_\epsilon^{\xi,y}(t))^2 \left( \frac{\tilde{u}_\epsilon^{\xi,y}(t+\delta)-\tilde{u}_\epsilon^{\xi,y}(t)}{\delta}\right)^2\,\mathrm{d}t\,\mathrm{d}\mathcal{H}^2(y)\,.
\end{split}
\end{equation}
Observe that $\tilde{u}_\epsilon^{\xi,y} \in PC_\delta((\wt\Omega^l_\delta)_{\xi,y})$, where $PC_\delta$ here denotes the space of  piecewise constant functions on intervals of size $\delta$. We now define $\hat u_{\epsilon,\xi,y}$ as the piecewise linear interpolation of $\tilde{u}_\epsilon^{\xi,y}$ on $(\wt\Omega^l_\delta)_{\xi,y}$. We remark that $\hat u_{\epsilon,\xi,y}$ has nothing to do with the slices $\hat{u}_\e^{\xi, y}$ of the affine function $\hat u_\e$ used in Proposition \ref{prop:compactness}, hence the different notation. Now, \eqref {eq: rewr} can be rewritten as
\begin{equation*}
\begin{split}
&\sum_{\alpha\in Z_{\delta}^{l}(\wt\Omega)} \delta (v_\epsilon(\alpha))^2 (\langle u_\epsilon(\alpha+\delta \xi)-u_\epsilon(\alpha),\xi\rangle)^2 \\
&= \frac{1}{M_\xi}\int_{\pi^\xi(\wt\Omega^l_\delta)}\int_{(\wt\Omega^l_\delta)_{\xi,y}}(\tilde{v}_\epsilon^{\xi,y}(t))^2 (\dot{\hat u}_{\epsilon, \xi , y}(t))^2\,\mathrm{d}t\,\mathrm{d}\mathcal{H}^2(y)\\
&\geq \frac{1}{M_\xi}\int_{\pi^\xi(\wt\Omega^\eta)}\int_{\wt\Omega^\eta_{\xi,y}}(\tilde{v}_\epsilon^{\xi,y}(t))^2 (\dot{\hat u}_{\epsilon, \xi , y}(t))^2\,\mathrm{d}t\,\mathrm{d}\mathcal{H}^2(y)
\end{split}
\end{equation*}
and we are left to prove that, for $\mathcal{H}^{2}$-a.e. $y\in\Pi^\xi$,
\begin{equation}\label{1212191044}
\int_{({\wt\Omega}^\eta\sm (A^\infty_u))_{\xi,y}}|\dot{u}^{\xi,y}(t)|^2\,\mathrm{d}t  \leq  \mathop{\lim\inf}_{\epsilon\to0}\int_{{\wt\Omega}^\eta_{\xi,y}}(\tilde{v}_\epsilon^{\xi,y}(t))^2|\dot{\hat u}_{\epsilon, \xi, y}(t)|^2\,\mathrm{d}t\,.
\end{equation}
Indeed, if the above holds, \eqref{bulkpart1} will follow as a consequence of Fatou's lemma by integrating the above estimate over $\Pi^\xi$ and observing that, since $u\in GSBD^2_\infty({\wt\Omega})$, then $\dot{u}^{\xi,y}(t)=\langle\mathcal{E}u(y+t\xi)\xi,\xi\rangle$ for a.e. $t\in {(\wt\Omega\sm A^\infty_u)}_{\xi,y}$ and $\mathcal E u=0$ in $A^\infty_u$. (Notice that we have also to use the arbitrariness of $\eta>0$.)

In the following we argue for $\wt\Omega$ in place of $\wt\Omega_\eta$, in order to simplify the notation, since we know that $d(\tilde {u}_\epsilon,u)\to 0$. Nevertheless, all the inequalities may be localized on $\wt\Omega_\eta$.
Since $\tilde {u}_\epsilon \to u$ in measure in $\wt\Omega \sm A^\infty_u$, by Fubini's Theorem (see~\cite[(5.5)]{CC}) 
we have that $\tilde{u}_\epsilon^{\xi,y}\to u^{\xi,y}$ in measure in ${(\wt\Omega \sm A^\infty_u)}_{\xi,y}$ for $\mathcal{H}^2$-a.e. $y\in\Pi^\xi$. The same holds then for the piecewise affine functions $\hat u_{\epsilon,\xi,y}$. Summarizing, we have for $\mathcal{H}^2$-a.e. $y\in\Pi^\xi$:
\begin{equation}
\begin{split}
\hat u_{\epsilon,\xi,y}\to u^{\xi,y} \mbox{ in measure in }  {(\wt\Omega \sm A^\infty_u)}_{\xi,y}\,,  \qquad 
 \tilde v_\epsilon^{\xi,y}\to 1
\mbox{ in $L^1({\wt\Omega}_{\xi,y})$,}
\label{conve}
\end{split}
\end{equation}
where the second one follows by Fubini's Theorem.
For fixed $y \in \Pi^\xi$ such that \eqref{conve} holds and the $\liminf$ in \eqref{1212191044} is finite, denoting by $\hat{v}_{\epsilon, \xi,y}$ the piecewise affine interpolations of $\tilde v^{\xi,y}_\epsilon$, from \eqref{equiboundedness} and the triangular inequality we deduce that 
\begin{equation*}
\frac{1}{\epsilon}\int_{{\wt\Omega}_{\xi,y}}(\tilde v_\epsilon^{\xi,y}(t)-1)^2\,\mathrm{d}t + \epsilon \int_{{\wt\Omega}_{\xi,y}}\dot{\hat{v}}_{\epsilon, \xi,y}(t)^2\,\mathrm{d}t \leq C(y)\,.
\end{equation*}
In view of Lemma~\ref{Lemma1D}, there exists a finite set $I^{\xi,y}\subset{\wt\Omega}_{\xi,y}$ such that for every $A^{\xi,y}$ open, with $A^{\xi,y}\subset\!\subset\!{\wt\Omega}_{\xi,y}\sm I^{\xi,y}$, there exists $\kappa>0$ such that
\begin{equation*}
\mathop{\lim\inf}_{\epsilon\to0} \inf_{s\in A^{\xi,y}}\tilde v_\epsilon^{\xi,y}(s)\geq\kappa\,.
\end{equation*}
In particular, we may assume that there exists $\kappa'>0$ such that, for $\epsilon$ small enough,
\begin{equation*}
\tilde v_\epsilon^{\xi,y}(s)\geq\kappa', \quad s\in A^{\xi,y},
\end{equation*}
so that
\begin{equation}
\kappa'\sup_\epsilon\int_{A^{\xi,y}}|\dot{\hat{u}}_{\epsilon, \xi,y}(t)|^2\,\mathrm{d}t\leq\sup_\epsilon\int_{{\wt\Omega}_{\xi,y}}(\tilde v_{\epsilon}^{\xi,y}(t))^2|\dot{\hat{u}}_{\epsilon, \xi,y}(t)|^2\,\mathrm{d}t<+\infty\,.
\label{interstima}
\end{equation}
Up to considering separately its connected components, we may assume that $A^{\xi,y}$ be connected and contained in one of the finitely many connected components of $\wt\Omega_{\xi,y}\sm I^{\xi,y}$ (it is not restrictive to assume $\wt\Omega$ connected).
Arguing as in \cite[part below (3.21)]{CC18Comp}, we have that by the regularity of $\hat{u}_{\epsilon, \xi,y}$, \eqref{conve}, and \eqref{interstima} one of the following two alternative possibilities hold: 
\begin{enumerate}
\item either  $|\hat{u}_{\epsilon, \xi,y}(x)|$ converge to $+\infty$ for some $x \in A^{\xi,y}$ and then $|\hat{u}_{\epsilon, \xi,y}| \to +\infty$ on $A^{\xi,y}$ and  $A^{\xi,y} \subset (A^\infty_u)_{\xi,y}$;
\item or $(\hat{u}_{\epsilon, \xi,y})_\epsilon$ is bounded in $H^1(A^{\xi,y})$ and then 
\begin{equation*}
u^{\xi,y}\in H^1(A^{\xi,y}) \text{ and } \hat{u}_{\epsilon, \xi,y}\rightharpoonup u^{\xi,y} \text{ in }H^1(A^{\xi,y}) \,.
\end{equation*} 
\end{enumerate}
In particular, ${\wt\Omega}_{\xi,y} \sm I^{\xi,y}$ is made up of a finite union of intervals, where either $\hat{u}_{\epsilon, \xi,y}$ converge in $H^1_{\rm loc}$ or $\hat{u}_{\epsilon, \xi,y}\to +\infty$. Therefore we may partition $\wt\Omega_{\xi,y}$ as $\wt\Omega_{\xi,y}^1 \cup \wt\Omega_{\xi,y}^2 \cup I^{\xi,y}$, where $\wt\Omega_{\xi,y}^1$, $\wt\Omega_{\xi,y}^2$ are finite unions of open intervals with boundary contained in $I^{\xi,y}$, such that $\hat{u}_{\epsilon, \xi,y} \to +\infty$ in $\wt\Omega_{\xi,y}^2$ and  $\hat{u}_{\epsilon, \xi,y} \to u^{\xi,y}$ in $H^1(A^{\xi,y})$ for every $A^{\xi,y} \subset \subset \wt\Omega_{\xi,y}^1$.

With \eqref{conve} and Lemma~\ref{lemma2} we obtain that for every 
$A^{\xi,y}\subset\subset {\wt\Omega}_{\xi,y}^1$  
\begin{equation}
\int_{A^{\xi,y}}|\dot{u}^{\xi,y}(t)|^2\,\mathrm{d}t\leq \mathop{\lim\inf}_{\epsilon\to0}\int_{{\wt\Omega}_{\xi,y}}(\tilde v_\epsilon^{\xi,y}(t))^2|\dot{\hat{u}}_{\epsilon, \xi,y}(t)|^2\,\mathrm{d}t\,.
\label{interstima1}
\end{equation}
Notice that \eqref{interstima1} holds for any arbitrary open set 
$A^{\xi,y}\subset \wt\Omega_{\xi,y}^1$, so that (since $\wt\Omega_{\xi,y}^2 \subset (A^\infty_u)_{\xi,y}$)
\begin{equation*}
 \int_{(\wt\Omega \sm A^\infty_u)_{\xi,y}}|\dot{u}^{\xi,y}(t)|^2\,\mathrm{d}t  \leq  \int_{\wt\Omega_{\xi,y}^1}|\dot{u}^{\xi,y}(t)|^2\,\mathrm{d}t\leq \mathop{\lim\inf}_{\epsilon\to0}\int_{\wt\Omega_{\xi,y}}(\tilde v_\epsilon^{\xi,y}(t))^2|\dot{\hat{u}}_{\epsilon, \xi,y}(t)|^2\,\mathrm{d}t\,.
\label{interstima2}
\end{equation*}
As observed before, the above estimate may be localized in $\wt\Omega_\eta$, obtaining \eqref{1212191044} and thus concluding the proof.
\endproof

 For every 
$\xi\in\R^d\backslash\{0\}$, $u\in L^1(\wt\Omega;\mathbb{R}^d)$, $v\in L^1(\wt\Omega)$, we define 
\begin{equation}\label{eq:H12}
\begin{split}
H^\xi(u,v)&:=\int_{\wt\Omega}({v}(x))^2 \,\Big|\Big\langle \mathcal{E}u(x)\frac{\xi}{|\xi|},\frac{\xi}{|\xi|}\Big\rangle\Big|^2 \mathrm{d}x\,.
\end{split} 
\end{equation} 
Setting $Z_{\delta}(\wt\Omega):=R^{\rm div}_\delta(\wt\Omega)\cap\delta Z$, where $R^{\rm div}_\delta(\wt\Omega)$ was defined in \eqref{range} (for the domain $\Omega$, here we consider as always the analogous one for $\wt\Omega$), and $Z:=2\Z^d$, and 
\begin{align}
&Q_{2m}:=\{x\in\mathbb{R}^d:\, |\langle x,e_i\rangle|\leq m,\,i=1,\dots,d\},\label{quadratone}\\
&Q_{2m,i,\pm}:=\{x\in Q_{2m}:\, \pm \langle x,e_i\rangle\geq0\}\label{mezquadratone}
\end{align}
\begin{figure}[htbp]
\centering
\def\svgwidth{150pt}
\begingroup%
  \makeatletter%
  \providecommand\color[2][]{%
    \errmessage{(Inkscape) Color is used for the text in Inkscape, but the package 'color.sty' is not loaded}%
    \renewcommand\color[2][]{}%
  }%
  \providecommand\transparent[1]{%
    \errmessage{(Inkscape) Transparency is used (non-zero) for the text in Inkscape, but the package 'transparent.sty' is not loaded}%
    \renewcommand\transparent[1]{}%
  }%
  \providecommand\rotatebox[2]{#2}%
  \newcommand*\fsize{\dimexpr\f@size pt\relax}%
  \newcommand*\lineheight[1]{\fontsize{\fsize}{#1\fsize}\selectfont}%
  \ifx\svgwidth\undefined%
    \setlength{\unitlength}{471.28060913bp}%
    \ifx\svgscale\undefined%
      \relax%
    \else%
      \setlength{\unitlength}{\unitlength * \real{\svgscale}}%
    \fi%
  \else%
    \setlength{\unitlength}{\svgwidth}%
  \fi%
  \global\let\svgwidth\undefined%
  \global\let\svgscale\undefined%
  \makeatother%
  \begin{picture}(1,1.03461722)%
    \lineheight{1}%
    \setlength\tabcolsep{0pt}%
    \put(0,0){\includegraphics[width=\unitlength,page=1]{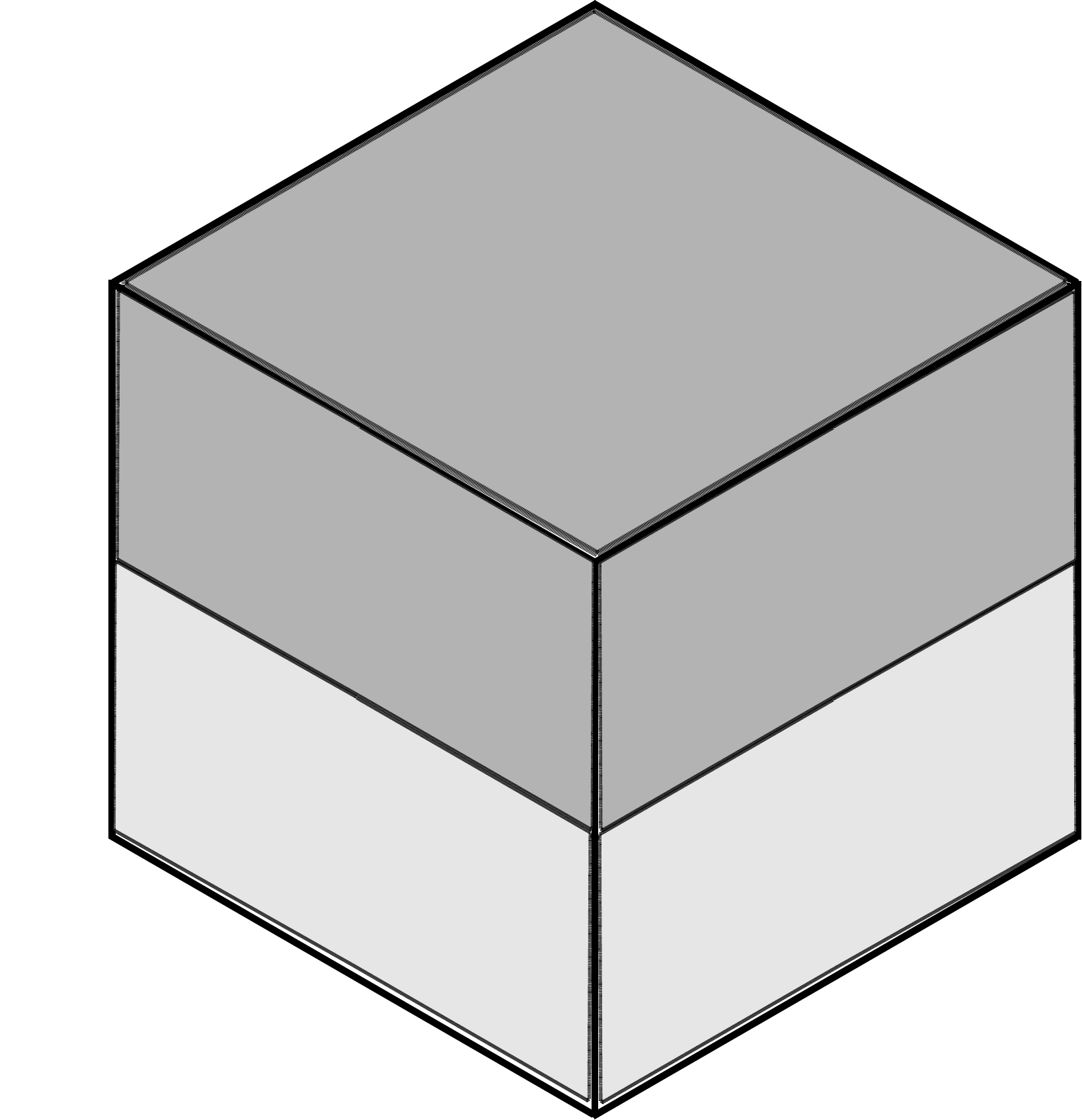}}%
    \put(0.1777066,0.29528386){\color[rgb]{0,0,0}\makebox(0,0)[lt]{\lineheight{1.25}\smash{\begin{tabular}[t]{l}$Q_{2m,i,+}$\end{tabular}}}}%
    \put(0,0){\includegraphics[width=\unitlength,page=2]{halfcubes.pdf}}%
    \put(0.76904978,0.0402809){\color[rgb]{0,0,0}\makebox(0,0)[lt]{\lineheight{1.25}\smash{\begin{tabular}[t]{l}$Q_{2m,i,-}$\end{tabular}}}}%
    \put(0,0){\includegraphics[width=\unitlength,page=3]{halfcubes.pdf}}%
    \put(0.02382549,0.57226991){\color[rgb]{0,0,0}\makebox(0,0)[lt]{\lineheight{1.25}\smash{\begin{tabular}[t]{l}$e_i$\end{tabular}}}}%
  \end{picture}%
\endgroup%

\caption{The half-cubes $Q_{2m,i,\pm}$.}\label{fig:halfcubes}
\end{figure} 
(see Fig.~\ref{fig:halfcubes}), we introduce the class of real-valued piecewise constant functions on the cells $\alpha+\delta Q_{2m}$ defined as
\begin{equation*}
\mathcal{A}_{2m\delta}(\wt\Omega;\mathbb{R}):=\biggl\{v\colon\Omega\to\mathbb{R}\colon v(x)\equiv v(\alpha) \mbox{ for every }x\in(\alpha+\delta Q_{2m})\cap\wt\Omega\mbox{ for any }\alpha\in Z_{\delta}(\wt\Omega)\biggr\}\,.
\end{equation*}

\begin{lem}\label{le:technical2}
Let $u\in GSBD^2_\infty(\wt\Omega)$ and $(w_\epsilon)_\epsilon$, $(v_\epsilon)_\epsilon$ be sequences such that $v_\epsilon\in\mathcal{A}_{2\delta}(\wt\Omega;\mathbb{R})$, $d(w_\epsilon, u)\to 0$,
\begin{align}
&\sup_{\epsilon>0}\left\{\sum_{i=1}^{d}H^{e_i}(w_\epsilon,v_\epsilon)\right\}<+\infty\,, \label{equibound}\\
&w_\epsilon^{e_i,y}\in H^1({\wt\Omega}_{e_i,y})\mbox{ for a.e. $y\in\Pi^{e_i}$, $i=1,\dots,d$,}\label{secregularity}\\
\sum_{\alpha\in Z_{\delta}({\wt\Omega})}&\delta^d\left(\frac{1}{\epsilon}(v_\epsilon(\alpha)-1)^2 + \epsilon\left(\frac{v_\epsilon(\alpha+2\delta e_i)-v_\epsilon(\alpha)}{\delta}\right)^2\right)\leq C,\, i=1,\dots,d\,.\label{bounds2}
\end{align}
Then
\begin{equation}\label{eq:liminfdiv}
\liminf_{\epsilon\to 0} \int_{\wt\Omega} (v_\epsilon(x))^2 (\mathrm{div}\, w_\epsilon(x))^2 \dx\geq \int_{\wt\Omega} (\mathrm{div}\, u(x))^2 \dx\,.
\end{equation}
\end{lem}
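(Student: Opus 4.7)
The plan is to show that $v_\epsilon\,\mathrm{div}\, w_\epsilon \weak \mathrm{div}\, u$ weakly in $L^2(\wt\Omega)$, where $\mathrm{div}\, u$ is extended by $0$ on $A^\infty_u$ as is natural in view of \eqref{eq: general jump}; once this is established, \eqref{eq:liminfdiv} follows from the weak lower semicontinuity of the $L^2$-norm. Assumption \eqref{equibound} already ensures that $v_\epsilon \partial_i w^i_\epsilon$ is bounded in $L^2(\wt\Omega)$ for each $i$, so a weak $L^2$-limit exists up to subsequences and only needs to be identified. As a harmless preliminary reduction, we would truncate $v_\epsilon \mapsto v_\epsilon \wedge 1$: this preserves \eqref{equibound}--\eqref{bounds2} and, combined with \eqref{bounds2}, yields $v_\epsilon \to 1$ in $L^2$ and a.e.\ in $\wt\Omega$.

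The heart of the argument is a direction-by-direction slicing step, parallel to but subtler than Lemma~\ref{le:technical1}. For fixed $i \in \{1,\dots,d\}$ and a test function $\varphi \in C^\infty_c(\wt\Omega \setminus A^\infty_u)$, Fubini gives
\begin{equation*}
\int_{\wt\Omega} v_\epsilon\, \partial_i w^i_\epsilon\, \varphi \dx = \int_{\Pi^{e_i}} \int_{\wt\Omega_{e_i,y}} v_\epsilon^{e_i,y}\, \dot{w}^{i,e_i,y}_\epsilon\, \varphi^{e_i,y}\, dt\, d\mathcal{H}^{d-1}(y)\,,
\end{equation*}
and, again by Fubini, for $\mathcal{H}^{d-1}$-a.e.\ $y \in \Pi^{e_i}$ the assumptions \eqref{equibound}, \eqref{secregularity}, \eqref{bounds2} specialise to a slice-wise $H^1$-regularity of $w_\epsilon^{e_i,y}$, a slice-wise weighted $L^2$ bound for $\dot{w}^{i,e_i,y}_\epsilon$, and a 1D discrete Modica--Mortola bound at scale $2\delta$. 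Lemma~\ref{Lemma1D} then supplies a finite exceptional set $I^{e_i,y}\subset \wt\Omega_{e_i,y}$ outside of which $v_\epsilon^{e_i,y}$ is eventually bounded below by a positive constant, so that $\dot{w}^{i,e_i,y}_\epsilon$ is bounded in $L^2$ on every compact $A \subset\!\subset \wt\Omega_{e_i,y}\setminus I^{e_i,y}$. Arguing exactly as in Lemma~\ref{le:technical1} (transferring $d(w_\epsilon,u)\to 0$ to slice-wise convergence in measure on $(\wt\Omega\setminus A^\infty_u)_{e_i,y}$ via the usual $\arctan$--Fubini compactness trick), we identify the weak limit of $\dot{w}^{i,e_i,y}_\epsilon$ as $\dot{u}^{i,e_i,y}$ in $L^2_{\rm loc}((\wt\Omega\setminus A^\infty_u)_{e_i,y})$; Lemma~\ref{lemma2} together with $v_\epsilon^{e_i,y}\to 1$ a.e.\ then upgrades this to weak $L^2$-convergence of the product $v_\epsilon^{e_i,y}\dot{w}^{i,e_i,y}_\epsilon$ on the support of $\varphi^{e_i,y}$, so that the inner integral converges to $\int \dot{u}^{i,e_i,y}\varphi^{e_i,y}\, dt$.

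A Cauchy--Schwarz estimate bounds the inner integral uniformly in $\epsilon$ by the slice-wise quantity $\|v_\epsilon^{e_i,y}\dot{w}^{i,e_i,y}_\epsilon\|_{L^2}\,\|\varphi^{e_i,y}\|_{L^2}$, which is integrable over $\Pi^{e_i}$ thanks to \eqref{equibound}; dominated convergence then gives $\int v_\epsilon \partial_i w^i_\epsilon\, \varphi\, dx \to \int \partial_i u^i\, \varphi\, dx$. Summing over $i$ yields $v_\epsilon\,\mathrm{div}\, w_\epsilon \weak \mathrm{div}\, u$ weakly in $L^2(\wt\Omega\setminus A^\infty_u)$, and weak lower semicontinuity (together with $\mathrm{div}\, u = 0$ on $A^\infty_u$) closes the argument. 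The principal obstacle, compared with the essentially scalar bound of Lemma~\ref{le:technical1}, is that $(\mathrm{div}\, u)^2$ contains cross terms $\partial_i u^i\,\partial_j u^j$ with $i\neq j$, so that a Fatou-type lower bound applied direction by direction to the diagonal contributions $\int (v_\epsilon \partial_i w^i_\epsilon)^2\dx$ is not enough. Genuine weak convergence of the sum $\sum_i v_\epsilon \partial_i w^i_\epsilon$ has to be extracted, and this is precisely what the slice-wise $H^1$-regularity \eqref{secregularity} (stronger than the $GSBD^2$-type slice regularity exploited in Lemma~\ref{le:technical1}) combined with the $2\delta$-scale constancy of $v_\epsilon$ built into \eqref{bounds2} make possible.
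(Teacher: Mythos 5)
Your overall strategy -- prove $v_\epsilon\,\mathrm{div}\,w_\epsilon \rightharpoonup \mathrm{div}\,u$ in $L^2$ off $A^\infty_u$ and conclude by weak lower semicontinuity -- is the paper's, but the way you identify the weak limit has a genuine gap. You fix $i$, slice the pairing with a test function, claim that for a.e.\ $y$ the inner integrals $\int v_\epsilon^{e_i,y}\dot w^{i,e_i,y}_\epsilon\varphi^{e_i,y}\,\mathrm{d}t$ converge along the sequence $\epsilon\to 0$, and then integrate in $y$ ``by dominated convergence''. Neither step is justified. The hypotheses \eqref{equibound} and \eqref{bounds2} are bounds integrated over $y$: for a fixed $y$ they only give finiteness of a liminf (Fatou), so the slice-wise energy bound, the applicability of Lemma~\ref{Lemma1D}, and the identification $\dot w^{i,e_i,y}_\epsilon\rightharpoonup \dot u^{e_i,y}$ are available only along $y$-dependent subsequences; this is incompatible with interchanging $\lim_\epsilon$ with $\int_{\Pi^{e_i}}\mathrm{d}\mathcal{H}^{d-1}(y)$ along a fixed sequence, and cannot be repaired by a single global subsequence extraction. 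In addition, your dominating function $\|v_\epsilon^{e_i,y}\dot w^{i,e_i,y}_\epsilon\|_{L^2}\|\varphi^{e_i,y}\|_{L^2}$ depends on $\epsilon$, so dominated convergence does not apply as stated (Vitali/equi-integrability would fix the domination, but not the missing a.e.-in-$y$ convergence). Finally, even for a ``good'' $y$ along a good subsequence, Lemma~\ref{Lemma1D} and Lemma~\ref{lemma2} only control $\dot w^{i,e_i,y}_\epsilon$ on compact subsets of the complement of the finite exceptional set $I^{e_i,y}$, while $\mathrm{supp}\,\varphi^{e_i,y}$ will in general contain such points (they sit at the slice jump points of $u$), so convergence of the inner integral over the whole support needs an extra argument near $I^{e_i,y}$ that you do not give. (A smaller point: $C_c^\infty(\wt\Omega\setminus A^\infty_u)$ is not meaningful, since $A^\infty_u$ is only a set of finite perimeter.)

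The paper circumvents exactly these obstacles by never passing to the limit in a duality pairing slice by slice. It proves, with the same slicing scheme as Lemma~\ref{le:technical1}, the shifted liminf inequality $\liminf_\epsilon\int_{\wt\Omega\setminus A^\infty_u}\big(v_\epsilon\langle\mathcal{E}w_\epsilon e_i,e_i\rangle-g\big)^2\dx\geq\int_{\wt\Omega\setminus A^\infty_u}\big(\langle\mathcal{E}u\,e_i,e_i\rangle-g\big)^2\dx$ for every $g\in L^2$ (first $g=0$, then general $g$ by approximation with piecewise constants). Liminf inequalities tolerate $y$-dependent subsequences (Fatou in $y$) and the exceptional set (nonnegative integrands, restriction to compacts away from $I^{e_i,y}$). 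Since $v_\epsilon\langle\mathcal{E}w_\epsilon e_i,e_i\rangle$ is bounded in $L^2$, this family of inequalities forces every weak cluster point to equal $\langle\mathcal{E}u\,e_i,e_i\rangle$ (choose $g$ of the form $\langle\mathcal{E}u\,e_i,e_i\rangle - t\,w$ with $w$ the difference of the two candidate limits and let $t\to\infty$); an Egorov set where $v_\epsilon>1-\eta$ then gives weak convergence of the diagonal entries, the trace identity sums them to $\mathrm{div}$, and Lemma~\ref{lemma2} concludes. To make your argument rigorous you need to replace the slice-wise limit passage by this shifted-liminf device or an equivalent mechanism for identifying the weak limit.
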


\proof

Notice that, under the assumption \eqref{equibound}, from the identity
\begin{equation}
\sum_{i=1}^d\langle Ae_i,e_i\rangle={\rm tr}(A)
\label{matrixidentity}
\end{equation}
we infer that $\sup_{\epsilon>0}\int_{\wt\Omega} (v_\epsilon(x))^2 (\mathrm{div}\, w_\epsilon(x))^2 \dx< +\infty$. We then show that
\begin{equation}
{v}_\epsilon\, \mathrm{div} \, w_\epsilon\rightharpoonup {\rm div}\,u\quad \mbox{ in }L^2({\wt\Omega}\sm A_u^\infty)\,,\label{debole3}
\end{equation} 
from which \eqref{eq:liminfdiv} immediately follows, recalling that $\mathcal E u=0$ in $A^\infty_u$. 
Note that by Egorov's Theorem, with fixed $\eta>0$ there exists ${\wt\Omega}_\eta\subset{{\wt\Omega}\sm A^\infty_u}$ such that $|{({\wt\Omega}\sm A^\infty_u)}\sm {\wt\Omega}_\eta|<\eta$ and $v_\epsilon>1-\eta$ on ${\wt\Omega}_\eta$ for $\epsilon$ small enough. 

Now, under assumptions \eqref{equibound}-\eqref{bounds2}, an analogous slicing argument as for the proof of Lemma~\ref{le:technical1} applied to $w_\epsilon^{e_i,y}$ shows that
\begin{equation}
\int_{\wt{\Omega}\sm A_u^\infty} (\langle (\mathcal{E}u(x))e_i,e_i\rangle-g(x))^2\,\mathrm{d}x \leq \mathop{\lim\inf}_{\epsilon\to0} \int_{\wt{\Omega}\sm A_u^\infty}({v}_\epsilon \langle (\mathcal{E}w_\epsilon(x))e_i,e_i\rangle-g(x))^2\,\mathrm{d}x
\label{debole11bis}
\end{equation}
for every $g\in L^2(\wt{\Omega} \sm A_u^\infty)$ and every $i=1,\dots,d$. The proof of \eqref{debole11bis} can be developed in the case $g=0$, the general case following by approximation of $g\in L^2(\wt{\Omega}\sm A_u^\infty)$ with piecewise constant functions on a Lipschitz partition of $\widetilde{\Omega}$. 

From \eqref{debole11bis} we then get
\begin{align}
&\langle (\mathcal{E}w_\epsilon)e_i,e_i\rangle \chi_{\wt{\Omega}_\eta} \rightharpoonup \langle (\mathcal{E}u)e_i,e_i\rangle\chi_{\wt{\Omega}_\eta}\quad \mbox{ in }L^2({\wt{\Omega}\sm A^\infty_u }),\mbox{ for every $i=1,\dots,d$}\,,\label{debole1bis}
\end{align} 
whence, by the identity \eqref{matrixidentity} we obtain
\begin{equation*}
{\rm div \, }w_\epsilon \chi_{\wt{\Omega}_\eta}\rightharpoonup {\rm div\,}u \chi_{\wt{\Omega}_\eta}\quad \mbox{ in }L^2({\wt{\Omega}\sm A^\infty_u})\,.
\end{equation*}
Finally, since $|{(\wt{\Omega}\sm A^\infty_u)} \sm \wt{\Omega}_\eta|<\eta$, letting $\eta\to0$ and by the absolute continuity of the integral we obtain
\begin{equation}
{\rm div\,}w_\epsilon \rightharpoonup {\rm div\,}u \quad \mbox{ in }L^2({\wt{\Omega}\sm A^\infty_u})\,.
\label{debole3bis}
\end{equation}
The assertion \eqref{debole3} now follows from \eqref{debole3bis} and Lemma~\ref{lemma2} since $v_\epsilon\leq1$ and $v_\epsilon\to1$ a.e. in ${\wt{\Omega}}$.
\endproof

As a consequence of Lemma~\ref{le:technical2}, we deduce now the optimal lower bound for the functionals $F_\epsilon^{\rm div}(u,v)$ as defined in Section~\ref{sec:discrmodel}.

\begin{prop}\label{prop:lowerbound}
Let $u_\epsilon\in\mathcal{A}_\delta({\wt{\Omega}};\mathbb{R}^d)$, $v_\epsilon\in\mathcal{A}_\delta({\wt{\Omega}};\mathbb{R})$ be such that 
\begin{equation}
\sup_\epsilon (E^{\mathrm{Dir}}\ltms)_\epsilon(u_\epsilon,v_\epsilon)<+\infty\,,
\label{equiboundedness2}
\end{equation}
$d(u_\epsilon, u) \to 0$, with  $u \in GSBD^2_\infty(\wt \Omega)$, and $v_\epsilon \to 1$ in $L^2(\wt \Omega)$.
Then
\begin{equation}
\begin{split}
\mathop{\lim\inf}_{\epsilon\to0} F_\epsilon^{\rm div}(u_\epsilon,v_\epsilon)&\geq \int_{\wt \Omega} |{\rm div}\,u(x)|^2\,\mathrm{d}x\,.
\end{split}
\label{bulkpart2}
\end{equation}
\end{prop}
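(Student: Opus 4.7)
The plan is to reduce the claim to Lemma~\ref{le:technical2} via a decomposition of $F_\epsilon^{\rm div}$ over the $2^d$ sign choices. Expanding $|{\rm Div}_\delta u_\epsilon|^2 = \sum_{k\in\{-1,1\}^d}|{\rm div}_\delta^{k_1 e_1,\ldots,k_d e_d}u_\epsilon|^2$ gives
\[
F_\epsilon^{\rm div}(u_\epsilon,v_\epsilon) \,=\, \frac{1}{2^d}\sum_{k\in\{-1,1\}^d}\,\sum_{\alpha\in R_\delta^{\rm div}(\wt\Omega)}\delta^{d-2}\,v_\epsilon(\alpha)^2\,|{\rm div}_\delta^{k_1 e_1,\ldots,k_d e_d}u_\epsilon(\alpha)|^2\, ,
\]
so \eqref{bulkpart2} reduces, by averaging on $k$, to proving for each fixed $k$ that the corresponding inner sum has $\liminf$ at least $\int_{\wt\Omega}|\mathrm{div}\, u|^2\,\mathrm{d}x$. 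By a reflection of the coordinate axes it is enough to treat $k=(+,\ldots,+)$.

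I would then interpret the single-sign sum as an integral $\int_{\wt\Omega}(\bar v_\epsilon)^2(\mathrm{div}\, w_\epsilon)^2\,\mathrm{d}x$ for a carefully chosen piecewise-constant $\bar v_\epsilon\in\mathcal{A}_{2\delta}(\wt\Omega;\R)$ and piecewise-affine $w_\epsilon$. For $\bar v_\epsilon$ one can take, on each cell $\alpha+2\delta[0,1)^d$ of the coarsened sublattice $2\delta\Z^d$ (after further splitting the sum on the $2^d$ translates $\tau\in\delta\{0,1\}^d$ of such sublattice), the constant value $v_\epsilon(\alpha)$. For $w_\epsilon$ one constructs the affine interpolation of $u_\epsilon$ on a triangulation of $\wt\Omega$ whose simplices in each elementary $\delta$-cell include the ``orthant'' simplex $T_\alpha:=\mathrm{conv}\{\alpha,\alpha+\delta e_1,\ldots,\alpha+\delta e_d\}$: in $d=2$ this is achieved by the anti-diagonal triangulation of the square, whose two triangles in each cell are of the form $T_{\alpha,(+,+)}$ and $T_{\alpha,(-,-)}$; in $d=3$ the analogous structure is provided by the two complementary $5$-tetrahedra triangulations of the cube, each formed by $4$ orthant tetrahedra plus a central tetrahedron whose edges lie along the NNN directions $e_i+e_j\in S_3$. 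On the orthant simplex $T_\alpha$ (which contains precisely the $d+1$ nodes involved in the discrete divergence at $\alpha$) one has the exact identity $\mathrm{div}\, w_\epsilon|_{T_\alpha} = \delta^{-1}\,{\rm div}_\delta^{e_1,\ldots,e_d}u_\epsilon(\alpha)$. Bookkeeping all the contributions (including, in $d=3$, those from the central tetrahedra, which are controlled via the NNN part of $F_\epsilon(u_\epsilon,v_\epsilon)$) and combining with the analogous constructions for the other sign choices should yield
\[
\int_{\wt\Omega}(\bar v_\epsilon)^2(\mathrm{div}\, w_\epsilon)^2\,\mathrm{d}x \,\leq\, F_\epsilon^{\rm div}(u_\epsilon,v_\epsilon) + o(1)\, .
\]

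Next, I would verify the hypotheses of Lemma~\ref{le:technical2} on $(w_\epsilon,\bar v_\epsilon)$: the convergence $d(w_\epsilon,u)\to 0$ follows by Lemma~\ref{lemmatransl}, as $w_\epsilon - u_\epsilon\to 0$ in measure; the slicewise $H^1$-regularity \eqref{secregularity} is immediate from the piecewise-affine structure; the uniform bound $\sup_\epsilon\sum_i H^{e_i}(w_\epsilon,\bar v_\epsilon)<+\infty$ follows arguing as in Step~1 of Proposition~\ref{prop:compactness}, since the part $\lambda F_\epsilon(u_\epsilon,v_\epsilon)$ of the energy $(E^{\mathrm{Dir}}_{\lambda,\theta})_\epsilon$ controls the discrete symmetrized gradient along each direction $e_i\in S_d$; and the Modica-Mortola-type bound \eqref{bounds2} at step $2\delta e_i$ for $\bar v_\epsilon$ follows from the corresponding $\delta e_i$-step bound controlled by $G_\epsilon(v_\epsilon)\leq C$ via the elementary inequality $(v(\alpha+2\delta e_i)-v(\alpha))^2\leq 2(v(\alpha+2\delta e_i)-v(\alpha+\delta e_i))^2 + 2(v(\alpha+\delta e_i)-v(\alpha))^2$. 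Applying Lemma~\ref{le:technical2} and combining with the above matching then gives \eqref{bulkpart2}. The main obstacle I expect is the triangulation construction: unlike for Lemma~\ref{le:technical1}, the divergence term cannot be decoupled along slices (as already noted in \cite{AFG}), and in dimension $d=3$ the $2^d=8$ orthant tetrahedra at the vertices of a cube overlap and cannot be simultaneously included in a single triangulation, forcing also non-orthant (central) simplices whose contribution has to be carefully reabsorbed in the estimate via the NNN interactions present in $S_3$.
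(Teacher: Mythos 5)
Your overall architecture is the paper's: both reduce the divergence term to Lemma~\ref{le:technical2} by building an interpolation whose pointwise divergence reproduces the discrete divergences, verify its slicewise regularity, the bound on $\sum_i H^{e_i}$ via the NN part of $F_\epsilon$, and the $2\delta$-step Modica--Mortola bound by the triangle inequality, and then recover the constant $1$ by sublattice/averaging bookkeeping. The difference is the interpolation itself, and this is where your argument has a genuine gap in $d=3$. With the two complementary $5$-tetrahedra decompositions, the orthant (corner) tetrahedra cover only $2/3$ of each cube: the central regular tetrahedron has volume $\delta^3/3$, and on it the divergence of your continuous piecewise-affine $w_\epsilon$ is a combination of \emph{face-diagonal} difference quotients, i.e.\ an NNN quantity which simply does not appear in $F^{\rm div}_\epsilon$ (see \eqref{energiesF2}). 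Hence the matching inequality you state, $\int \bar v_\epsilon^2(\mathrm{div}\,w_\epsilon)^2\,\mathrm{d}x\le F^{\rm div}_\epsilon(u_\epsilon,v_\epsilon)+o(1)$, is false for your construction: the central contribution is of order one (a fixed fraction of the volume, weighted by difference quotients that are only energy-bounded, not vanishing). ``Reabsorbing'' it via the NNN part of $F_\epsilon$ only yields an estimate of the type $\int \bar v_\epsilon^2(\mathrm{div}\,w_\epsilon)^2\,\mathrm{d}x\le \tfrac43 F^{\rm div}_\epsilon + C\,F_\epsilon^{\rm NNN}$, and after applying Lemma~\ref{le:technical2} the term $\limsup_\epsilon F_\epsilon^{\rm NNN}$ cannot be removed (it is merely bounded and bears no relation to $\int(\mathrm{div}\,u)^2$), so you do not obtain \eqref{bulkpart2} with the sharp constant $1$, which is exactly what the statement requires. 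In $d=2$ your scheme does work (averaging the diagonal and anti-diagonal triangulations, every node--sign pair is hit exactly once with the right weight and there are no central simplices), but the proposition is stated for $d\in\{2,3\}$.

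The paper avoids this obstruction by \emph{not} using a globally continuous simplicial interpolation: on each cell $\alpha+\delta Q_2$ of a translated sublattice of $2\delta\Z^d$ it builds the field $z_\epsilon$ of \eqref{interp1}, whose $i$-th component is affine in $x_i$ alone on the two half-cells, so that $\mathrm{div}\,z_\epsilon$ equals exactly $\tfrac1\delta\,\mathrm{div}_\delta^{k_1e_1,\dots,k_de_d}u_\epsilon(\alpha)$ on each of the $2^d$ octant sub-cubes (see \eqref{gradinterp}--\eqref{formule4}), with \emph{all} of them based at the center node $\alpha$ and hence correctly weighted by $v_\epsilon(\alpha)$, and with no leftover volume; summing over the $2^d$ translated sublattices then gives the constant $1$ in both dimensions. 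This $z_\epsilon$ is discontinuous, which is precisely why Lemma~\ref{le:technical2} is formulated with only the slicewise hypothesis \eqref{secregularity} rather than for Sobolev or continuous piecewise-affine functions. A secondary (fixable, but real) flaw in your write-up: taking $\bar v_\epsilon\equiv v_\epsilon(\alpha)$ on the $2\delta$-cell is inconsistent with a triangulation whose simplices have divergences based at nodes $\beta\neq\alpha$; you would need a $\min$-type weight (as in \eqref{pc4}) and then to re-verify that it fits the class $\mathcal{A}_{2\delta}(\wt\Omega;\R)$ and the bound \eqref{bounds2} required by Lemma~\ref{le:technical2} --- an issue that does not arise in the paper's construction, where every sub-cube of the cell shares the single base node $\alpha$.
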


\proof
We prove \eqref{bulkpart2} for $d=3$, the case $d=2$ being analogous. 
Notice that $\Z^3$ admits the following decomposition:
\begin{equation*}
\begin{split}
\Z^3=\bigcup_{l=1}^8 Z^{l}:= & Z\cup\bigcup\biggl\{ Z+\xi:\, \xi\in\left\{\{e_i\}_{i=1,2,3},\{e_i+e_j\}_{1\leq i<j\leq3},e_1+e_2+e_3\right\}\biggr\}\,.
\end{split}
\end{equation*}
Correspondingly,  recalling that  $Z_{\delta}^{l}(\wt\Omega)=R_\delta^{\rm div}(\wt\Omega)\cap\delta Z^{l}$ and  setting 
\begin{equation*}
{F}_\epsilon^{{\rm div},l}(u,v): =  \frac{1}{8}\sum_{\alpha\in Z_{\delta}^{l}(\wt\Omega)}\delta(v(\alpha))^2\left|{\rm Div}_{\delta}u(\alpha)\right|^2
\end{equation*} 
we can rewrite the energies as
${F}_\epsilon^{{\rm div}}(u,v)=\sum_{l=1}^8{F}_\epsilon^{{\rm div},l}(u,v)$,
so that
\begin{equation}
\mathop{\lim\inf}_{\epsilon\to0} F^{\rm div}_\epsilon(u_\epsilon,v_\epsilon)\geq\sum_{l=1}^8\mathop{\lim\inf}_{\epsilon\to0}{F}_\epsilon^{{\rm div},l}(u_\epsilon,v_\epsilon)\,.
\label{decompo2}
\end{equation}
With fixed $\eta>0$ and $\wt\Omega_\eta$ defined as in the proof of Lemma~\ref{le:technical2}
we argue for $l=1$ and claim that
\begin{equation}
\mathop{\lim\inf}_{\epsilon\to0} {F}_\epsilon^{{\rm div},1}(u_\epsilon,v_\epsilon)\geq \frac{1}{8} \int_{\wt\Omega_\eta}({\rm div}\,u)^2\,\mathrm{d}x\,.
\label{claime}
\end{equation}

For this, we start by defining two other  piecewise constant interpolations  $\tilde{u}_\epsilon$ and $\tilde{v}_\epsilon$ of $u_\epsilon$ and $v_\epsilon$, respectively.  
For  $\alpha \in Z_{\delta}(\wt\Omega)$ and $Q_2$ as in \eqref{quadratone}, we set 
\begin{equation}
\tilde{u}_\epsilon(x):=u_\epsilon(\alpha)\,, \qquad \tilde{v}_\epsilon(x):=v_\epsilon(\alpha)\,,\quad x\in\alpha+\delta Q_2\,.
\label{firstinterp}
\end{equation}
It is immediate to check that $\tilde v_\epsilon \to 1$ in $L^1(\wt\Omega)$, and, more in general, that 
\eqref{bounds2} are satisfied. 
Indeed, for every $\alpha\in \delta\mathbb{Z}^3$ and $i=1,2,3$, by triangle inequality we have 
\begin{equation*}
\begin{split}
|v_\epsilon(\alpha{+}2\delta e_i){-}v_\epsilon(\alpha)|^2\leq 2\biggl(|v_\epsilon(\alpha{+}2\delta e_i){-}v_\epsilon(\alpha{+}\delta e_i)|^2 {+} |v_\epsilon(\alpha{+}\delta e_i){-}v_\epsilon(\alpha)|^2\biggr)\,.
\end{split}
\label{estim1}
\end{equation*}
We also have that $d(\tilde{u}_\epsilon, u)\to 0$. 
This follows arguing as in Lemma~\ref{le:technical1}.  

We introduce further interpolations of $u_\epsilon$, whose components $z_\epsilon^i$, $i=1,2,3$ are piecewise affine, defined as
\begin{equation}
z_\epsilon^i(x):=
\begin{cases}
u_\epsilon^i(\alpha)+ \frac{1}{\delta}D_\delta^{e_i} u_\epsilon(\alpha) (x_i-\alpha_i)\,, & \mbox{ if } x\in(\alpha+\delta Q_{2,i,+})\cap\wt\Omega\,,\\
\\
u_\epsilon^i(\alpha)+ \frac{1}{\delta}D_\delta^{-e_i} u_\epsilon(\alpha) (x_i-\alpha_i)\,, & \mbox{ if } x\in(\alpha+\delta Q_{2,i,-})\cap\wt\Omega\,,
\end{cases}
\label{interp1}
\end{equation}
where $Q_{2,i,\pm}$ are as in \eqref{mezquadratone}.

Notice that, by the definition \eqref{interp1}, the first component of $z_\epsilon$ is continuous across interfaces which are orthogonal to $e_1$. Indeed, clearly no discontinuity of $z^1_\epsilon(x)$ can appear at points $x$ on the interface between $\alpha+\delta Q_{2,1,+}$ and $\alpha+\delta Q_{2,1,-}$; the only points to be checked are those $\bar x$ on the boundary between $\alpha+\delta Q_{2}$ and $(\alpha+2\delta e_1)+\delta Q_{2}$. A direct computation shows that, since $\bar x\in\partial(\alpha+\delta Q_{2,1,+})\cap\partial((\alpha+2\delta e_1)+\delta Q_{2,1,-})$, one has
\[
\lim_{x\to \bar x}z^1_\epsilon(x) = u_\epsilon^1(\alpha+\delta e_1)\,,
\] 
which proves the claim.

It follows that $z_\epsilon^{e_1,y} \in H^1(\wt\Omega_{e_1, y})$ for $\mathcal{H}^2$-almost every $y\in \Pi^{e_1}$. A similar argument shows that  $z_\epsilon^{e_i,y} \in H^1(\wt\Omega_{e_i, y})$ for $\mathcal{H}^2$-almost every $y\in \Pi^{e_i}$ for every $i=2,3$.
We now prove that $z_\epsilon \to u$ in measure on $\wt\Omega$. It will be enough to show that 
\begin{align*}
&\tilde{v}_\epsilon\left (z_\epsilon-\tilde u_\epsilon\right) \to 0 \mbox{ in $L^1(\wt\Omega)$ }.
\end{align*}
To see this, again we may argue componentwise and observe that, since 
$|\delta Q_2|=8\delta^3$, 
\[
\int_{\alpha +\delta Q_2}\left|\tilde{v}_\epsilon(z_\epsilon^i-\tilde u_\epsilon^i)\right|\,\mathrm{d}x \le 12\delta^3 |v_\epsilon(\alpha)|\left( |D_\delta^{e_i} u_\epsilon(\alpha)|+|D_\delta^{-e_i} u_\epsilon(\alpha)|\right)\,.
\]
By the Cauchy-Schwarz inequality, and using equiboundedness of the energies, we get
\begin{equation*}
\begin{split}
&\sum_{\alpha\in Z_{\delta}(\wt\Omega)}\int_{\alpha +\delta Q_2}|\tilde{v}_\epsilon(z_\epsilon^i-\tilde u_\epsilon^i)|\,\mathrm{d}x\\
&\leq 12\delta^3 \biggl(\sum_{\alpha\in Z_{\delta}(\wt\Omega)}|v_\epsilon(\alpha)|^2|D_{\delta, e_i} u_\epsilon(\alpha)|^2)\biggr)^\frac{1}{2} \biggl(\#(Z_{\delta}(\wt\Omega))\biggr)^\frac{1}{2} \le C \delta^3 \biggl(\#(Z_{\delta}(\wt\Omega))\biggr)^\frac{1}{2} \le C\delta\\
\end{split}
\end{equation*}
which entails the convergence of  $z_\epsilon \to u$ in measure on $\wt\Omega$.

For all $\psi \in S^2$ it holds  $\langle \mathcal{E}z(x)\psi, \psi \rangle= \partial_\psi \langle z(x),\psi\rangle$, where $\partial_\psi w$ stands for the directional derivative of $w$ with respect to $\psi$. Applying this to the unitary vectors $e_i$, by \eqref{interp1} we have that
\begin{equation}
\langle \mathcal{E}z_\epsilon(x)e_i, e_i\rangle=
\left\{
\begin{array}{c}
\frac1\delta D^{e_i}_{\delta} u_\epsilon(\alpha) \quad \mbox{if } x \in \alpha+\delta Q_{2,i, +}\\[5pt]
\frac1\delta D^{-e_i}_{\delta} u_\epsilon(\alpha) \quad \mbox{if } x \in \alpha+\delta Q_{2,i, -}\,.
\end{array}
\right. 
\label{gradinterp}
\end{equation}
\begin{figure}[htbp]
\centering
\def\svgwidth{150pt}
\begingroup%
  \makeatletter%
  \providecommand\color[2][]{%
    \errmessage{(Inkscape) Color is used for the text in Inkscape, but the package 'color.sty' is not loaded}%
    \renewcommand\color[2][]{}%
  }%
  \providecommand\transparent[1]{%
    \errmessage{(Inkscape) Transparency is used (non-zero) for the text in Inkscape, but the package 'transparent.sty' is not loaded}%
    \renewcommand\transparent[1]{}%
  }%
  \providecommand\rotatebox[2]{#2}%
  \newcommand*\fsize{\dimexpr\f@size pt\relax}%
  \newcommand*\lineheight[1]{\fontsize{\fsize}{#1\fsize}\selectfont}%
  \ifx\svgwidth\undefined%
    \setlength{\unitlength}{426bp}%
    \ifx\svgscale\undefined%
      \relax%
    \else%
      \setlength{\unitlength}{\unitlength * \real{\svgscale}}%
    \fi%
  \else%
    \setlength{\unitlength}{\svgwidth}%
  \fi%
  \global\let\svgwidth\undefined%
  \global\let\svgscale\undefined%
  \makeatother%
  \begin{picture}(1,1.14788732)%
    \lineheight{1}%
    \setlength\tabcolsep{0pt}%
    \put(0,0){\includegraphics[width=\unitlength,page=1]{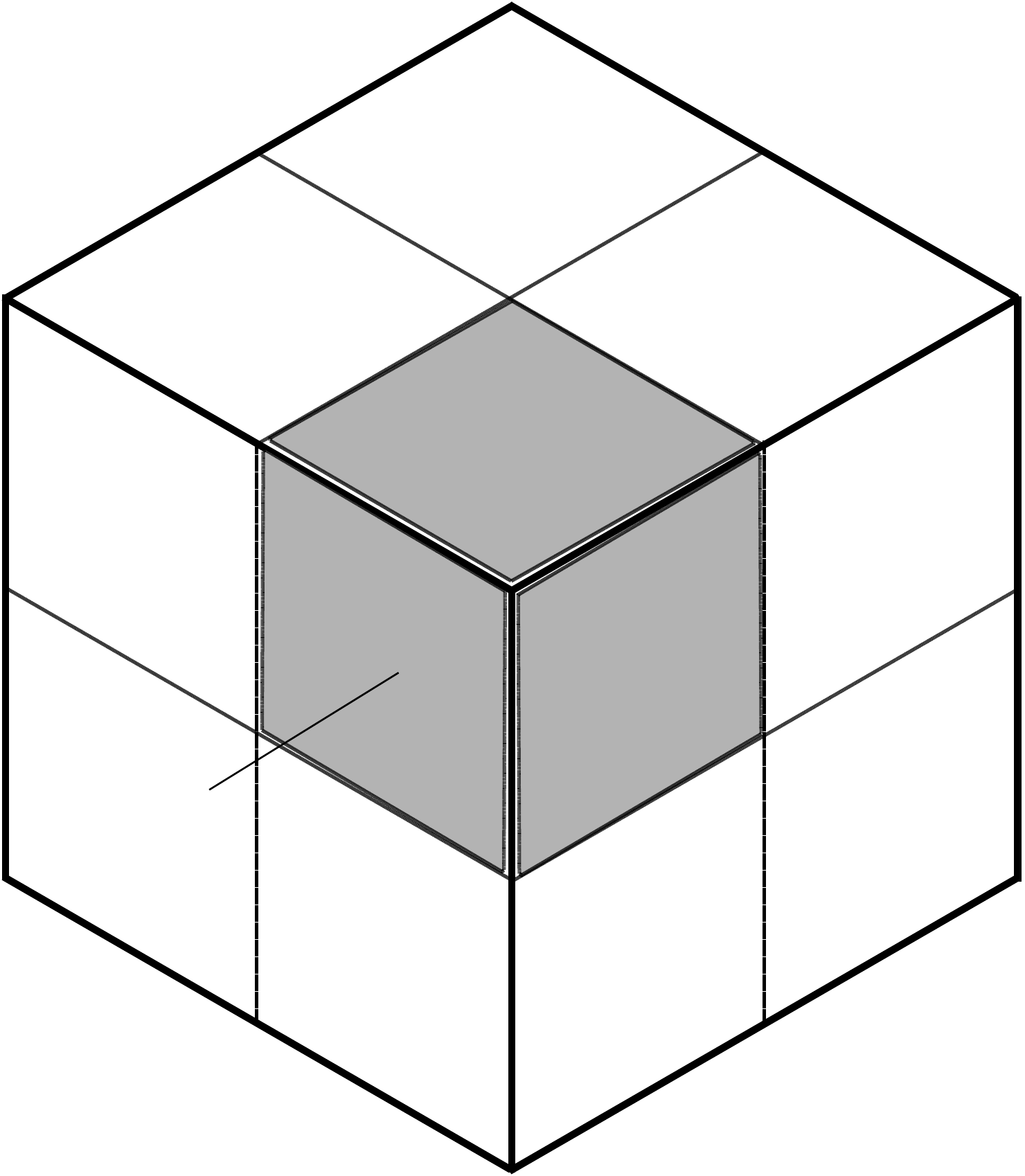}}%
    \put(0.06566305,0.33749881){\color[rgb]{0,0,0}\makebox(0,0)[lt]{\lineheight{1.25}\smash{\begin{tabular}[t]{l}\small$\mathcal{Q}^{k_1 e_1, k_2 e_2, k_3 e_3}$\end{tabular}}}}%
  \end{picture}%
\endgroup%

\caption{The cubes $\mathcal{Q}^{k_1e_1,k_2e_2,k_3e_3}$.}\label{fig:smallcubes}
\end{figure}
Then, by using the identity \eqref{matrixidentity}, we have that
\begin{equation}
({\rm div}\,z_\epsilon(x))^2= \frac{1}{\delta^2}|{\rm div}_\delta^{k_1e_1,k_2e_2,k_3e_3} u_\epsilon(\alpha)|^2 \quad\mbox{ if } x\in\delta \mathcal{Q}^{k_1e_1,k_2e_2,k_3e_3}(\alpha)
\label{interp5}
\end{equation}
for every $(k_1,k_2,k_3)\in \{-1,1\}^3$, where we have set 
\begin{equation}\label{0712192124}
\delta \mathcal{Q}^{k_1e_1,k_2e_2,k_3e_3}(\alpha):=\bigcap_{i=1,2,3}(\alpha+\delta Q_{2,i,{\rm sign}(k_i)})\,. 
\end{equation}
Since $|\delta \mathcal{Q}^{k_1e_1,k_2e_2,k_3e_3}|=\delta^3$, it holds
\begin{equation}
\begin{split}
\int_{\alpha+\delta Q_{2}}(\tilde{v}_\epsilon(x))^2({\rm div}\,z_\epsilon(x))^2\,\mathrm{d}x&=\delta(v_\epsilon(\alpha))^2\sum_{(k_1,k_2,k_3)\in\{-1,1\}^3}|{\rm div}_\delta^{k_1e_1,k_2e_2,k_3e_3} u_\epsilon(\alpha)|^2\\
&=\delta(v_\epsilon(\alpha))^2|{\rm Div}_{\delta}u_\epsilon(\alpha)|^2\,.
\end{split}
\label{formule4}
\end{equation}
Now, from the equi-boundedness of the energies \eqref{equiboundedness2}, we infer that
\begin{equation}
\sup_{\epsilon>0}\left\{H^{e_1}(z_\epsilon,\tilde{v}_\epsilon)+H^{e_2}(z_\epsilon,\tilde{v}_\epsilon)+H^{e_3}(z_\epsilon,\tilde{v}_\epsilon)\right\}<+\infty,
\label{equiboundedH}
\end{equation}
where $H^\zeta$ is defined as in \eqref{eq:H12}.
Thus, the conclusion \eqref{eq:liminfdiv} of Lemma~\ref{le:technical2} holds with $z_\epsilon$ and $\tilde{v}_\epsilon$ in place of $w_\epsilon$ and $v_\epsilon$, respectively.
Therefore, with \eqref{formule4}, it follows that
\begin{equation*}
\begin{split}
\mathop{\lim\inf}_{\epsilon\to0} {F}_\epsilon^{{\rm div},1}(u_\epsilon,v_\epsilon)&\geq \mathop{\lim\inf}_{\epsilon\to0}\left(\frac{1}{8}\int_{\wt\Omega_\eta}(\tilde{v}_\epsilon(x))^2({\rm div}\,z_\epsilon(x))^2\,\mathrm{d}x\right)\geq \frac{1}{8} \int_{\wt\Omega_\eta}({\rm div}\,u)^2\,\mathrm{d}x\,,
\end{split}
\label{stimoneprelim2}
\end{equation*}
which proves the claim \eqref{claime}.

We now observe that we have also, for every $l$ and $\eta$ small,
\begin{equation*}
{F}_\epsilon^{{\rm div},l}(u_\epsilon,v_\epsilon)\geq \frac{1}{8}\int_{\wt\Omega_\eta}(\tilde{v}_\epsilon(x))^2({\rm div}\,z_\epsilon(x))^2\,\mathrm{d}x\,.
\end{equation*}
In fact, \eqref{gradinterp}--\eqref{formule4} continue to hold, since the lattices $Z^{l}$ are just suitable translations of $Z^1\equiv Z$, while the compact subset $\wt \Omega_\eta$ of $\wt \Omega$ appears on the right-hand side.  We deduce that \eqref{claime} follows also for general ${F}_\epsilon^{{\rm div},l}$ in place of ${F}_\epsilon^{{\rm div},1}$.

By \eqref{decompo2} we eventually obtain that 
\begin{equation*}
\mathop{\lim\inf}_{\epsilon\to0} F^{\rm div}_\epsilon(u_\epsilon,v_\epsilon)\geq  \int_{\wt\Omega_\eta}({\rm div}\,u(x))^2\,\mathrm{d}x\,,
\end{equation*}
whence \eqref{bulkpart2} follows by the arbitrariness of $\eta>0$.
\endproof

 With the results proven before in this section, we are in position to prove the liminf inequality for $(E^\mathrm{Div}\ltms)_\epsilon$.

\begin{prop}\label{lowbound}
Assume that $\lim_{\varepsilon\to 0} \frac{\delta}{\varepsilon}=0$. Let $(u_\epsilon,v_\epsilon)_\epsilon\subset L^1(\wt\Omega;\mathbb{R}^d)\times L^2(\wt\Omega;\mathbb{R})$ be such that $u_\epsilon\in\mathcal{A}_\delta(\wt\Omega;\mathbb{R}^d)$, $v_\epsilon\in\mathcal{A}_\delta(\wt\Omega;\mathbb{R})$, 
\begin{equation}
\sup_\epsilon (E^\mathrm{Div}\ltms)_\epsilon(u_\epsilon,v_\epsilon)<+\infty\,,
\label{equiboundedness1}
\end{equation}
$d(u_\epsilon, u)\to 0$ for $u\in GSBD^2_\infty(\wt\Omega)$, $v_\epsilon \to 1$ in $L^2(\wt\Omega)$. 
Then 
\begin{equation}
\mathop{\lim\inf}_{\epsilon\to0} (E^\mathrm{Div}\ltms)_\epsilon(u_\epsilon,v_\epsilon)\geq {\mathcal{G}}^{\mathrm{Dir}}\ltms(u)\,.
\label{lbound}
\end{equation}
\end{prop}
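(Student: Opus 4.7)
The plan is to split $(E^{\mathrm{Dir}}\ltms)_\varepsilon = \lambda F_\epsilon + \theta F_\epsilon^{\mathrm{div}} + G_\epsilon$ and estimate the liminf of each summand separately, relying on the technical results of this section for the bulk terms and on a blow-up analysis for the Modica--Mortola term.

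For the elastic contributions, I would fix the kernel $\sigma$ as in \eqref{0912191307}. Applying Lemma~\ref{le:technical1} for each $\xi \in S_d$ and combining with the identity in Remark~\ref{rem:summation},
\begin{equation*}
\liminf_{\epsilon\to 0} \lambda F_\epsilon(u_\epsilon, v_\epsilon) \geq \lambda\sum_{\xi \in S_d} \frac{\sigma_{|\xi|}}{|\xi|^4} \int_{\wt \Omega} |\langle \mathcal{E}u(x)\xi,\xi\rangle|^2 \dx = \lambda \int_{\wt\Omega} |\mathcal{E}u|^2 \dx + \frac{\lambda}{2}\int_{\wt\Omega} |\mathrm{div}\, u|^2 \dx.
\end{equation*}
Proposition~\ref{prop:lowerbound} adds $\liminf \theta F^{\mathrm{div}}_\epsilon(u_\epsilon, v_\epsilon) \geq \theta \int_{\wt\Omega} |\mathrm{div}\, u|^2 \dx$, yielding together the bulk contribution $\lambda \int |\mathcal{E}u|^2 + (\lambda/2 + \theta)\int |\mathrm{div}\, u|^2$ on $\wt\Omega$. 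In view of the extension $u'$ from \eqref{eq:u'} and the identification \eqref{1112192224}, it remains to establish
\begin{equation*}
\liminf_{\epsilon\to 0} G_\epsilon(v_\epsilon) \geq \mathcal{H}^{d-1}(J_{u'}).
\end{equation*}

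For the surface bound I would introduce the positive Radon measures
\begin{equation*}
\mu_\epsilon := \tfrac{1}{2}\left[\tfrac{1}{\epsilon}(\hat v_\epsilon - 1)^2 + \epsilon |\nabla \hat v_\epsilon|^2\right] \mathcal{L}^d \res \wt\Omega,
\end{equation*}
where $\hat v_\epsilon$ is the piecewise affine interpolation on the Freudenthal triangulation $\mathcal{T}^d_\epsilon$ used in Proposition~\ref{prop:compactness}; the computation leading to \eqref{(4.10)} yields $\mu_\epsilon(\wt\Omega) \leq G_\epsilon(v_\epsilon) + o(1)$. Up to a subsequence, $\mu_\epsilon \wstar \mu$ with $\mu(\wt\Omega) \leq \liminf_\epsilon G_\epsilon(v_\epsilon)$, so by the Besicovitch derivation theorem applied to the rectifiable set $J_{u'}$ the claim reduces to
\begin{equation*}
\frac{d\mu}{d\mathcal{H}^{d-1} \res J_{u'}}(x_0) \geq 1 \qquad \text{for }\mathcal{H}^{d-1}\text{-a.e.\ }x_0 \in J_{u'}.
\end{equation*}
At such a point $x_0$, denoting $\nu = \nu_{u'}(x_0)$ and writing $Q_r^\nu(x_0)$ for the cube of side $r$ centered at $x_0$ and oriented along $\nu$, I would rescale by $y = (x - x_0)/r$, setting $\tilde u_{\epsilon,r}(y) := u_\epsilon(x_0 + ry)$ and $\tilde v_{\epsilon,r}(y) := v_\epsilon(x_0 + ry)$. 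A diagonal extraction provides sequences $\tilde u_n, \tilde v_n$ whose associated rescaled Ambrosio--Tortorelli energy has mesh $\delta_n/r_n$ and ellipticity $\epsilon_n/r_n$, with $\delta_n/\epsilon_n\to 0$ preserved. A compactness argument patterned on Proposition~\ref{prop:compactness}, applied to the rescaled displacement after passing to a truncated $GSBD^2$ representative $\wt{u}_t$ when $x_0 \in \partial^* A_u^\infty$, shows that $\tilde u_n$ approaches the one-jump limit $a^+ \chi_{\{y\cdot\nu>0\}} + a^-\chi_{\{y\cdot\nu<0\}}$, and the continuous one-dimensional Ambrosio--Tortorelli optimal profile yields the required density lower bound $1$.

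The decisive obstacle is this last step. A straightforward slicing of $G_\epsilon$ along lattice directions under the mere boundedness of $\delta/\epsilon$ would recover only the \emph{discrete} one-dimensional optimal profile, which is strictly smaller than the continuous one and would produce a wrong surface constant. The assumption $\delta/\epsilon \to 0$ enters precisely here, ensuring that after rescaling the mesh $\delta_n/r_n$ remains infinitesimal relative to $\epsilon_n/r_n$ and the sharp continuous Modica--Mortola lower bound applies to $\hat v_n$. A secondary technical nuisance, handled via the metric $d$ on $GSBD^2_\infty$ and the truncation strategy already exploited in this section, is that $u'$ may attain $\infty$ on part of the blow-up cell when $x_0 \in \partial^* A^\infty_u$; in this situation one works with the truncated representative $\wt u_t$, whose jump set and symmetric gradient coincide with those of $u$ by \eqref{eq: general jump}, and passes to the limit in measure on the finite part.
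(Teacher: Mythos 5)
Your overall architecture (bulk terms via Lemma~\ref{le:technical1}, Remark~\ref{rem:summation} and Proposition~\ref{prop:lowerbound}; surface term via blow-up and Besicovitch differentiation; truncation $\wt u_t$ at points of $\partial^*A^\infty_u$; the assumption $\delta/\epsilon\to0$ entering only in the surface estimate) is the same as the paper's, but the surface bound as you set it up has a genuine gap. You define the blow-up measure using \emph{only} the Modica--Mortola energy of $\hat v_\epsilon$, and then assert that, since the rescaled displacements converge to a one-jump function, ``the continuous one-dimensional Ambrosio--Tortorelli optimal profile yields the required density lower bound $1$''. Nothing in your argument forces $\hat v_\epsilon$ to become small near the jump inside the blow-up cube: the phase field is tied to the displacement only through the coupled term $v^2|\mathcal{E}\hat u|^2$, and boundedness of $G_\epsilon$ together with convergence in measure of the rescaled $u_\epsilon$ carries no such information. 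To run the one-dimensional profile estimate on a.e.\ slice one must first know that $v$ drops to (almost) zero somewhere on that slice; in the paper this comes from the local bound \eqref{bbound} on $\int\tilde v_{min,j}^2|\mathcal{E}\hat u_j|^2$ over the unit cube, via the contradiction argument around \eqref{liminfnon0}--\eqref{(4.16)}. Such a local bound does \emph{not} follow from the global energy bound: under the blow-up the bulk term scales like $\rho^{d-2}$ while the normalization is $\rho^{d-1}$, so the rescaled elastic energy is only controlled by $C\rho^{2-d}$, which diverges for $d=3$; and a compactness argument ``patterned on Proposition~\ref{prop:compactness}'' for the rescaled pair likewise presupposes a bound on the rescaled energy that your measure does not record.

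This is precisely why the paper does not split the energy cleanly as $\lambda F_\epsilon+\theta F_\epsilon^{\rm div}+G_\epsilon$: the blow-up measure $\mu_\epsilon^\zeta$ contains $G_\epsilon$ \emph{plus} a small fraction $\zeta F_\epsilon$ of the elastic energy, the bulk lower bound being applied to $(1-\zeta/\lambda)\lambda F_\epsilon$ and $\zeta\to0$ taken only at the end. Finiteness of the Radon--Nikodym density of $\mu^\zeta$ at $\hd$-a.e.\ $x_0\in J_u$ then yields, as in \eqref{2210191050}, a uniform (indeed vanishing) bound on the rescaled elastic energy, which supplies both the compactness you invoke and the key estimate \eqref{bbound}; the passage from ``$\tilde v_{min,j}$ small'' to ``$\hat v_j$ small'' through the exceptional set $N_j^\kappa$ in \eqref{(4.20)} is where $\delta/\epsilon\to0$ is actually used (your heuristic about discrete versus continuous optimal profiles points in the right direction, but this is the quantitative mechanism). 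Incidentally, \eqref{(4.10)} holds only up to a multiplicative constant, so your claim $\mu_\epsilon(\wt\Omega)\leq G_\epsilon(v_\epsilon)+o(1)$ with the full gradient is not justified; the sharp constant is recovered by retaining only the derivative of $\hat v_j$ along the normal direction, as in Step~3 of the paper. Unless you reinstate an elastic contribution in the blow-up measure (or otherwise obtain a local elastic bound at the blow-up scale), your density estimate cannot be completed, in particular in dimension $3$.
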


\proof

Let us fix a small $\zeta \in(0,1)$. For every $\epsilon>0$, we define the discrete measures
\begin{equation*}
\begin{split}
\mu_\epsilon^\zeta:=& \frac{1}{2}\sum_{\alpha\in \wt\Omega_\delta}\delta^d\left(\frac{1}{\epsilon}(v_{ \epsilon }(\alpha)-1)^2 + \epsilon\sum_{k=1}^d\left(\frac{v_{ \epsilon }(\alpha+\delta e_k)-v_{ \epsilon }(\alpha)}{\delta}\right)^2\right) \mathbbm{1}_\alpha\\& + \frac{\zeta}{2} \sum_{\xi \in S_d} \sum_{\alpha\in R_\delta^\xi(\wt\Omega)}\delta^{d-2}(v(\alpha))^2\left|D_{\delta,\xi}u(\alpha)\right|^2 \mathbbm{1}_\alpha\,,
\end{split}
\end{equation*}
where $\mathbbm{1}_\alpha$ denotes the Dirac delta in $\alpha$. 
We observe that
\begin{equation*}
 (E^\mathrm{Div}\ltms)_\epsilon(u_\epsilon,v_\epsilon) \geq  \Big(1- \frac{\zeta}{\lambda}\Big)  \lambda F_\varepsilon(u_\varepsilon, v_\varepsilon) + \theta F_\varepsilon^{\mathrm{div}}(u_\varepsilon, v_\varepsilon) + \mu_\varepsilon^\zeta(\wt\Omega)  \,.
\end{equation*}
In view of Lemma~\ref{le:technical1} (recall Remark~\ref{rem:summation}) and Proposition~\ref{prop:lowerbound}, the general proof will be a consequence of
\begin{equation}
\mathop{\lim\inf}_{\epsilon\to0} \mu_\epsilon^\zeta(\wt\Omega)\geq \mathcal{H}^{d-1}(J_u\cap \wt \Omega)\,,
\label{claimG}
\end{equation}
by the arbitrariness of $\zeta \in (0,1)$.
Therefore we prove \eqref{claimG} in the following. 
We divide the proof into three steps: in Step~1 we see that \eqref{claimG} is guaranteed from \eqref{claimjump}; in Step~2 we show that, after a blow up procedure around a fixed $x_0$ in a set of full $\hd$-measure of $J_u$, \eqref{claimjump} would follow from \eqref{claim2}; in Step~3 we prove \eqref{claim2}. 
\medskip
\paragraph*{\textbf{Step~1.}} 
Since by \eqref{equiboundedness1} it holds that
\begin{equation*}\label{1612191800}
\sup_{\epsilon>0}\mu_\epsilon^\zeta(\wt\Omega)<+\infty\,,
\end{equation*}
we have that there exists a positive bounded Radon measure $\mu^\zeta$ such that, up to subsequences, $\mu_\epsilon^\zeta\wstar\mu^\zeta$  weakly$^*$ in $\mathcal{M}_b^+(\wt\Omega)$.
Since $J_u$ is countably rectifiable, so that $\hd \res J_u$ is $\sigma$-finite, and $\mu^\zeta \in \mathcal{M}_b^+(\wt\Omega)$, then the Radon-Nikodym derivative of $\mu^\zeta$ with respect to $\hd\res J_u$ exists (cf.\ e.g.\ \cite[Theorem~2.9]{ChaCri19}). Denoting its density by $\mu^\zeta_J \in L^1(J_u;\R^+)$, we have that $\mu^\zeta_J$ may be explicitly computed by (see e.g.\ \cite[Theorems~1.28 and 2.83]{AFP})  
\begin{equation}\label{(4.7)}
\mu^\zeta_J(x_0)=\lim_{\rho\to0^+}\frac{\mu^\zeta(Q_\rho^\nu(x_0))}{\mathcal{H}^{d-1}(Q_\rho^\nu(x_0)\cap J_u)}=\lim_{\rho\to0^+}\frac{\mu^\zeta(Q_\rho^\nu(x_0))}{\rho^{d-1}}\,,\quad\text{for } \hd\text{-a.e.\ } x_0 \in J_u\,,
\end{equation}
where $\nu:=\nu_u(x_0)$ and $Q^\nu_\rho(x_0)=x_0+ \rho\, Q^\nu$, $Q^\nu$ being the unitary cube centered in $x_0$ with two faces in planes orthogonal to $\nu$. 
Let us  set $(Q_\rho^\nu(x_0))^\pm:=x_0+\rho \, Q^{\nu,\pm}=\{x\in Q_\rho^\nu(x_0):\, \pm\langle x-x_0,\nu\rangle>0\}$ for the following discussion.

We now claim that
\begin{equation}
\mu^\zeta_J(x_0)\geq1\quad \mbox{ for $\mathcal{H}^{d-1}$-a.e. }x_0\in J_u\,.
\label{claimjump} 
\end{equation}
Once \eqref{claimjump} has been proved, the conclusion \eqref{claimG} follows by a standard argument. Indeed, by choosing an increasing sequence of cut-off functions $(\varphi_k)\subset C_c^\infty(\wt\Omega)$ such that $0\leq\varphi_k\leq1$ and $\sup_k\varphi_k=1$, we get
\begin{equation*}
\mathop{\lim\inf}_{\epsilon\to0}\mu^\zeta_\epsilon(\wt\Omega)\geq \mathop{\lim\inf}_{\epsilon\to0}\int_{\wt\Omega}\varphi_k\mathrm{d}\mu^\zeta_\epsilon = \int_{\wt\Omega} \varphi_k\mathrm{d}\mu^\zeta\geq \int_{J_u\cap{\wt\Omega}} \varphi_k\mathrm{d}\mu^\zeta_J\,, \\
\end{equation*}
whence \eqref{claimG} follows letting $k\to+\infty$ by the Monotone Convergence Theorem. 

\medskip
\paragraph*{\textbf{Step~2.}}  Since $u \in GSBD^2_\infty({\wt\Omega})$, we may subdivide $J_u$ into $J_u \cap ({\wt\Omega} \sm A^\infty_u)$ and $\partial^* A^\infty_u$. Moreover, $\widetilde{u}_t:=u\chi_{{\wt\Omega}\sm A^\infty_u} + t \chi_{A^\infty_u} \in GSBD^2({\wt\Omega})$ and $J_u=J_{\widetilde{u}_t}$ (up to a $\hd$-negligible set) for $\Ld$-a.e.\ $t \in \Rd$. Therefore, for $\hd$-a.e.\ $x_0 \in J_u \cap ({\wt\Omega} \sm A^\infty_u)$ there exist two values $u^\pm(x_0) \in \Rd$ such that 
\begin{equation}\label{(4.8)}
\mathop{{\rm ap}\lim}_{\substack{x\in(Q_\rho^\nu(x_0))^\pm\\x\to x_0}}u(x)=u^\pm(x_0)\,;
\end{equation}
 moreover, for $\hd$-a.e.\ $x_0 \in \partial^* A^\infty_u$, assuming that $\nu$ is the outer normal to ${\wt\Omega}\sm A^\infty_u$, it holds that there exists $u^-(x_0)\in \Rd$ such that
\begin{equation}\label{2110192100}
\mathop{{\rm ap}\lim}_{\substack{x\in(Q_\rho^\nu(x_0))^-\\x\to x_0}}u(x)=u^-(x_0)\,, \qquad \mathop{{\rm ap}\lim}_{\substack{x\in(Q_\rho^\nu(x_0))^+\\x\to x_0}}\tanh (|u(x)|)=1\,.
\end{equation} 
In fact, the latter identity may be seen by considering the $GSBD^2$ function $\widetilde{u}_t$ for a $t$ for which $J_u=J_{\widetilde{u}_t}$, so that $x_0 \in J_{\widetilde{u}_t}$. Thus the approximate limit of $\widetilde{u}_t$ as $x\to x_0$ in $(Q_\rho^\nu(x_0))^+$ is $t$; on the other hand, we have that $\widetilde{u}_t(x)=t$ if and only if $|u(x)|=+\infty$, so we deduce the latter identity in \eqref{2110192100}.

Let us fix $x_0 \in J_u$ such that \eqref{(4.7)} and either \eqref{(4.8)} (if $x_0 \in \wt\Omega\sm A^\infty_u$) or \eqref{2110192100} (if $x_0 \in \partial^* A^\infty_u$) hold. Notice that this corresponds to fix $x_0$ in a subset of $J_u$ of full $\hd$-measure. 
 Since $\mu^\zeta \in \mathcal{M}_b^+({\wt\Omega})$, we have that $\mu^\zeta(Q_\rho^\nu(x_0))=\mu^\zeta(\overline{Q_\rho^\nu(x_0)})$ except for a countable family of $\rho$'s. Moreover, for $\rho$ small the upper semicontinuous function $\chi_{\bar{Q}_\rho}$ has compact support in ${\wt\Omega}$. Thus, in view of \cite[Proposition~1.62(a)]{AFP} and the Besicovich Derivation Theorem (see, e.g., \cite[Theorem~2.22]{AFP}) we infer that for every $\rho_m\to0$ and every $\epsilon_j\to0$ it holds that
\begin{equation*}
\mu^\zeta_J(x_0)\geq \lim_{m\to+\infty}\mathop{\lim\sup}_{j\to+\infty}\frac{\mu^\zeta_{\epsilon_j}(Q_{\rho_m}^\nu(x_0))}{\rho_m^{d-1}}\,,
\end{equation*}
so that we need an estimate from below of $\frac{\mu^\zeta_{\epsilon_j}(Q_{\rho_m}^\nu(x_0))}{\rho_m^{d-1}}$. For this, we first note that for every $j$ and for every $m$ we can find $x_0^j\in \delta_j\mathbb{Z}^d$ and $\rho_{m,j}>0$ such that $x_0^j\to x_0$, $\rho_{m,j}\to\rho_m$ as $j\to+\infty$ and $\delta_j\mathbb{Z}^d\cap Q_{\rho_{m,j}}^\nu(x_0^j)=\delta_j\mathbb{Z}^d\cap Q_{\rho_{m}}^\nu(x_0)$. Now,  setting in correspondence to $\delta_j=\delta(\epsilon_j)$
\[
\tau_{m,j}:=\frac{\delta_j}{\rho_{m,j}}\,,\qquad  \sigma_{m,j}:=\frac{\epsilon_j}{\rho_{m,j}}\,,  
\]
we introduce the functions $u_{j,m} \in \mathcal{A}_{\tau_{m,j}}(Q^\nu;\Rd)$, $v_{j,m} \in \mathcal{A}_{\tau_{m,j}}(Q^\nu;\R)$ characterized by   the  following  ``change of variables  in the nodes'' 
\begin{equation}
u_{j,m}(\beta):=u_{\epsilon_j}(x_0^j+\rho_{m,j}\beta)\,,\quad v_{j,m}(\beta):=v_{\epsilon_j}(x_0^j+\rho_{m,j}\beta) \quad \mbox{ for every }\beta\in\frac{\delta_j}{\rho_{m,j}}\mathbb{Z}^d\cap Q^\nu.
\label{changevar}
\end{equation}
Let $G_{\sigma_{m,j}}$ and $F_{\sigma_{m,j}}$ be defined by replacing, in \eqref{energiesG} for $G_{\sigma_{m,j}}$, both $\delta_{m,j}$ with $\tau_{m,j}$ and $\epsilon_{m,j}$ with $\sigma_{m,j}$, and, in \eqref{energiesF} for  $F_{\sigma_{m,j}}$, $\delta_{m,j}$ with $\tau_{m,j}$. 
We find that
\begin{equation}\label{2210191014}
\frac{\mu^\zeta_{\epsilon_j}(Q_{\rho_m}^\nu(x_0))}{\rho_m^{d-1}}\geq \left(\frac{\rho_{m,j}}{\rho_m}\right)^{d-1} G_{ \sigma_{m,j}}(v_{j,m}, Q^\nu) + \zeta F_{\sigma_{m,j}}(u_{j,m}, v_{j,m}) \frac{\rho_{m,j}^{d-2}}{\rho_m^{d-1}}\,. 
\end{equation}
In particular we have that 
\begin{equation}\label{2210191050}
\sup_{m,j} F_{\sigma_{m,j}}(u_{j,m}, v_{j,m})< \sup_{m,j} \frac{1}{\zeta} \frac{\rho_{m}^{d-1}}{\rho_{m,j}^{d-2}} \frac{\mu^\zeta_{\epsilon_j}(Q_{\rho_m}^\nu(x_0))}{\rho_m^{d-1}} < \frac{1}{\zeta} \sup_{m,j} \frac{\mu^\zeta_{\epsilon_j}(Q_{\rho_m}^\nu(x_0))}{\rho_m^{d-1}}<+\infty\,.
\end{equation}
Notice that we used above that $\zeta>0$ is fixed, and it holds indeed that $\lim_{m,j} F_{\sigma_{m,j}}(u_{j,m}, v_{j,m})=0$.

 By \eqref{2210191014}, \eqref{2210191050}, Proposition~\ref{prop:compactness}, and Theorem~\ref{th: GSDBcompactness}, we obtain that $(u_{j,m}, v_{j,m})_{j,m}$ converges, up to a subsequence, towards a suitable couple in $GSBD^2_\infty({\wt\Omega}) \times L^2({\wt\Omega})$. 

 Moreover, setting $u_m(y):=u(x_0+\rho_m y)$ for $y \in Q^\nu$, it holds that $(u_m)_m$ converges in $L^0(Q^\nu;\Rd)$ to 
\begin{equation}\label{2210191215}
u_0(x):=
\begin{cases}
u^+(x_0), & \mbox{ if }\langle x-x_0,\nu\rangle\geq0\,,\\
u^-(x_0), & \mbox{ if }\langle x-x_0,\nu\rangle<0\,,
\end{cases} \qquad \text{if }x_0 \in J_u \cap ({\wt\Omega}\sm A^\infty_u)\,,
\end{equation}
while, if $x_0 \in \partial^* A^\infty_u$, we have that $u_m|_{Q^{\nu,-}}$ converges in $L^0({Q^{\nu,-}};\Rd)$ to $u_0^-(x):= u^-(x_0)$ in ${Q^{\nu,-}}$ and that $\tanh(|u_m|)|_{{Q^{\nu,+}}}$ converges in $L^1({Q^{\nu,+}};\Rd)$ to the constant function 1. 
Since, for fixed $m$, $u_{j,m}$, $v_{j,m}$ converge in measure to $u_m$, $v_m$ as $j\to +\infty$, by a diagonal argument we may find a sequence $m_j\to+\infty$ such that  
 the above properties hold for $u_j:=u_{j,m_j}$ as $j\to +\infty$ in place of $u_m$ as $m\to +\infty$ and $v_j:=v_{j,m_j}\to 1$ in $L^2(Q^\nu)$, $\sigma_j:=\sigma_{m_j,j}\to0$, $\tau_j:=\tau_{m_j,j}\to0$, and
 \begin{equation*}
\mu^\zeta_J(x_0)\geq \mathop{\lim\inf}_{j\to+\infty}G_{\sigma_j}(v_j,Q^\nu)\,. 
\end{equation*}

 We now collect these informations and the fact that $(u_j, v_j)_j$ converges $\Ld$-a.e., up to a subsequence (see discussion below \eqref{2210191050}). Therefore
\begin{equation}\label{2210191322}
u_j \to u_0 \in GSBD^2_\infty({\wt\Omega}) \qquad \Ld\text{-a.e.\ in }Q^\nu
\end{equation} 
and $v_j \to 1$ in $L^2(Q^\nu)$, where $u_0$ is given by \eqref{2210191215} if $x_0 \in J_u \cap ({\wt\Omega}\sm A^\infty_u)$ and by
\begin{equation}\label{2210191216}
u_0(x):=
\begin{cases}
 u_0(x) \in \widetilde{\R}^d \sm \Rd,   &\mbox{ if }\langle x-x_0\,,\nu\rangle\geq0\,,\\
  u^-(x_0), & \mbox{ if }\langle x-x_0\,,\nu\rangle<0\,,
\end{cases} \qquad \text{if }x_0 \in \partial^* A^\infty_u\,.
\end{equation}

Thus, \eqref{claimjump} (and then the result) would follow from
\begin{equation}
\mathop{\lim\inf}_{j\to+\infty}G_{\sigma_j}(v_j,Q^\nu)\geq1\,,
\label{claim2}
\end{equation}
that we show in the remaining part of the present proof.
\medskip
\paragraph*{\textbf{Step~3.}}
Up to passing to a subsequence, we may assume that the liminf in \eqref{claim2} is actually a limit. Now, we consider a suitable triangulation $\mathcal{T}_j^d$ of $Q^\nu$, as introduced in Proposition~\ref{prop:compactness}. Namely, we set
\begin{equation*}
\mathcal{T}^d_j:=\{\alpha+\delta T:\quad T\in\Sigma_d,\, \alpha\in\tau_j\mathbb{Z}^d\cap Q^\nu\}\,.
\end{equation*}
We then denote by $\hat{u}_j=(\hat{u}_j^1,\hat{u}_j^2,\dots, \hat{u}_j^d)$ and $\hat{v}_j$ the piecewise-affine interpolations of $u_j$ and $v_j$ on $\mathcal{T}^d_j$, respectively. We have that $\hat{u}_j\to u_0$ in measure on $Q^\nu$, and $\hat{v}_j\to1$ in $L^2(Q^\nu)$. 

With fixed $\eta>0$, by arguing as for the proof of \eqref{(4.10)} we can prove that for $j$ large
\begin{equation}
G_{\sigma_j}(v_j,Q^\nu)\gtrsim \int_{Q^\nu_{1-\eta}} \frac{(\hat{v}_j(x)-1)^2}{\sigma_j}+\sigma_j|\nabla\hat{v}_j(x)|^2\,\mathrm{d}x\,.
\label{(4.10bis)}
\end{equation} 

Now, we introduce the piecewise constant functions $\tilde{v}_{min,j}$ as in \eqref{pc4} and, along the lines of the proof of \eqref{equib1},
with \eqref{equiboundedness1} (here we use again that $\zeta>0$ is fixed in the definition of $\mu_\varepsilon^\zeta$, as done for \eqref{2210191050})
we have that 
\begin{equation*}
\mathop{\lim\inf}_{j\to+\infty}\int_{Q_{1-\eta}^\nu}(\tilde{v}_{min,j}(x))^2|\mathcal{E}\hat{u}_j(x)|^2\,\mathrm{d}x\leq C<+\infty\,,
\end{equation*}
whence we can assume, by taking a further (not relabeled) subsequence, that
\begin{equation}
\sup_j\int_{Q_{1-\eta}^\nu}(\tilde{v}_{min,j}(x))^2|\mathcal{E}\hat{u}_j(x)|^2\,\mathrm{d}x\leq C<+\infty\,.
\label{bbound}
\end{equation}
Recalling the notation for slicing in Section~\ref{sec:prelim}, for any fixed $\eta>0$ there exists $\gamma=\gamma(\eta)$ such that, setting $I_\eta:=(\frac{-1+\eta}{2},\frac{1-\eta}{2})$, it holds
\begin{equation*}
y+ t (\nu+\vartheta) \subset Q^\nu \qquad \text{ for all } y \in Q^\nu_{1-\eta}\cap\Pi^\nu,\, t \in I_\eta, \, \vartheta \in \nu^\perp,\, |\vartheta|<\gamma\,.
\end{equation*}
 Therefore, recalling also \eqref{2210191322}, \eqref{2210191216}, we infer that for $\hd$-a.e.\ $y\in Q^\nu_{1-\eta}\cap\Pi^\nu$ and  $\vartheta\in \nu^\perp$, $|\vartheta|<\gamma$ (with the notation for slicing from \eqref{section1}, \eqref{section2})
 \begin{equation*}
 \hat{u}_j^{{(\nu{+}\vartheta)},y}\in H^1(I_\eta)\,,\qquad \hat{u}_j^{{(\nu{+}\vartheta)},y}\to u_0^{{(\nu{+}\vartheta)},y} \quad \mathcal{L}^1\text{-a.e.\ in }I_\eta\,.
\end{equation*} 
 We now have that for $\hd$-a.e.\ $\vartheta\in \nu^\perp$, $0\neq |\vartheta|<\gamma$ 
\begin{equation}
\begin{split}
 J_{u_0^{\nu,y}}=\{0\}, &\text{ if } \langle u^+(x_0){-}u^-(x_0),\nu\rangle \neq 0\,,\\  J_{u_0^{{(\nu{+}\vartheta)},y}} = \{ 0\}, & \text{ if }  \langle u^+(x_0){-}u^-(x_0),\nu \rangle= 0 \,,
\end{split}
\label{convsec}
\end{equation}
for $\mathcal{H}^1$-a.e. $y\in Q^\nu_{1-\eta} \cap\Pi^\nu$.
In the case where $x_0 \in J_u\cap ({\wt\Omega}\sm A^\infty_u)$, \eqref{convsec} are readily obtained and the second expression holds true for every $\vartheta$. In the case $x_0 \in \partial^* A^\infty_u$, we regard the points where $u_0^{{(\nu{+}\vartheta)},y}$ (here possibly $\theta=0$) passes from a finite to an infinite value as jump points, that is we adopt the same convention as for $GSBD^2_\infty$ functions, and we work with the usual product between two numbers in $\widetilde{\R}$ and $\R$, setting $0\cdot (\pm \infty)=0$. By standard arguments (in the spirit of e.g.\ \cite[Lemma~2.7]{CC18Comp}), we can see that for $\hd$-a.e.\ $\xi$, $\lim_{t \to 0^+}|u_0^{\xi,y}(t)|=+\infty$ for $\hd$-a.e.\ $y$.

From now on we assume that $\langle u^+(x_0){-}u^-(x_0) , \nu \rangle \neq 0$, so that we may take $\vartheta=0$ to ease the reading. In the opposite case, we may argue in the very same way, just replacing the slices along the direction $\nu$ through the slices along a direction $\nu {+} \vartheta$, for some $\vartheta\in \nu^\perp$, $0\neq |\vartheta| < \gamma$, and considering, below \eqref{(4.19)}, $\pi^{\nu{+}\vartheta} \colon \Rd \to \Pi^\nu$ given by $\pi^{\nu{+}\vartheta}(x)=\{x+t(\nu{+}\vartheta) \colon t \in \R\} \cap \Pi^\nu$, in place of $\pi^\nu$.

Then, with \eqref{bbound} and Fubini's Theorem, we have
\begin{equation*}
\begin{split}
+\infty>C&\geq\int_{Q_{1-\eta}^\nu}(\tilde{v}_{min,j}(x))^2|\mathcal{E}\hat{u}_j(x)|^2\,\mathrm{d}x \\
&\geq \int_{Q_{1-\eta}^\nu\cap\Pi^\nu} \left(\int_{I_\eta}(\tilde{v}^{\nu,y}_{min,j}(t))^2(\dot{\hat{u}}^{\nu,y}(t))^2\,\mathrm{d}t\right)\,\mathrm{d}\mathcal{H}^{d-1}(y)\,,
\end{split}
\end{equation*}
whence we deduce the existence of a set $N\subset\Pi^\nu$ with $\mathcal{H}^{d-1}(N)=0$ such that
\begin{equation}
\sup_j\int_{I_\eta}(\tilde{v}^{\nu,y}_{min,j}(t))^2(\dot{\hat{u}}^{\nu,y}(t))^2\,\mathrm{d}t <+\infty
\label{bbound2}
\end{equation}
for every $y\in(Q^\nu_{1-\eta}\cap\Pi^\nu)\backslash N$. It is not restrictive to assume that $\hat{u}_j^{\nu,y}\in H^1(I_\eta)$ for every $y\in (Q^\nu_{1-\eta}\cap\Pi^\nu)\backslash N$. Now, let $I'_\eta$ be any open interval such that $0\in \overline{I'_\eta}\subset I_\eta$. If it were
\begin{equation}
\mathop{\lim\inf}_j\inf_{s\in I'_\eta}\tilde{v}^{\nu,y}_{min,j}(s)>0\,,
\label{liminfnon0}
\end{equation}
from \eqref{bbound2} we would infer that $\hat{u}_j^{\nu,y}\rightharpoonup u_0^{\nu,y}$ in $H^1(I'_\eta)$ and $J_{u_0^{\nu,y}}\cap I'_\eta=\emptyset$, which clearly would contradict \eqref{convsec}. 
Thus, the liminf in \eqref{liminfnon0} is 0, so that for every $y\in (Q^\nu_{1-\eta}\cap\Pi^\nu)\backslash N$ there exists a sequence $(s_j^y)_j\subset I_\eta$ complying with
\begin{equation}
\tilde{v}_{min,j}^{\nu,y}(s_j^y)\to0 \mbox{ as $j\to+\infty$ }.
\label{(4.16)}
\end{equation}
Now, with fixed $\kappa>0$, we claim that there exists a set $N_j^\kappa\subset\Pi^\nu$, with $\mathcal{H}^{d-1}(N_j^\kappa)\to0$ as $j\to+\infty$, such that for every $y\in(Q^\nu_{1-\eta}\cap\Pi^\nu)\backslash (N\cup N_j^\kappa)$ there exists $j_0:=j_0(y,\kappa)$ satisfying
\begin{equation*}
\hat{v}_{j}^{\nu,y}(s_j^y)\leq\frac{5}{4}\kappa\quad \mbox{ for every $j\geq j_0$ }.
\end{equation*}
For this, for every $\alpha\in\tau_j\mathbb{Z}^d\cap Q^\nu$ we set
\begin{equation*}
M_j^\alpha:=\max\left\{|v_j(\alpha)-v_j(\beta)|:\, \beta\in\tau_j\mathbb{Z}^d\cap Q^\nu,|\alpha-\beta|=\tau_j|\xi|,\, \xi\in S_d\right\} 
\end{equation*}
and
\begin{equation*}
\mathcal{I}_j^\kappa:=\left\{\alpha\in\tau_j\mathbb{Z}^d\cap Q^\nu:\, M_j^\alpha\geq\frac{\kappa}{2}\right\}.
\end{equation*}
From the equiboundedness of the energies \eqref{equiboundedness1} and an analogous argument as for the proof of \eqref{boundsmallset}, we deduce that there exists a constant $C>0$ such that
\begin{equation*}
\begin{split}
C\geq \sum_{\alpha\in\mathcal{I}_j^\kappa}\sum_{\substack{\beta\in\tau_j\mathbb{Z}^d\cap Q^\nu \\  |\alpha-\beta|=\tau_j}}\sigma_j\tau_j^d\left|\frac{v_j(\alpha)-v_j(\beta)}{\tau_j}\right|^2&\gtrsim \sum_{\alpha\in\mathcal{I}_j^\kappa}\sigma_j\tau_j^{d-2}(M_j^\alpha)^2\gtrsim \#(\mathcal{I}_j^\kappa){\kappa^2}\sigma_j\,\tau_j^{d-2} 
\end{split}
\end{equation*}
for every $j$, whence
\begin{equation}
\#(\mathcal{I}_j^\kappa)\leq\frac{c(\kappa)}{\sigma_j\tau_j^{d-2}}\,,\quad \mbox{ for every $j$ }.
\label{(4.19)}
\end{equation}
Let $\pi^\nu:\mathbb{R}^d\to\Pi^\nu$ be the orthogonal projection onto the hyperplane $\Pi^\nu$ and set
\begin{equation*}
N_j^\kappa:=\bigcup_{\alpha\in\mathcal{I}_j^\kappa}\pi^\nu(\alpha+\tau_j[0,1)^d)\,.
\end{equation*}
then, with \eqref{(4.19)} we infer that
\begin{equation}
\mathcal{H}^{d-1}(N_j^\kappa)\leq \sqrt{d}\tau_j^{d-1}\#(\mathcal{I}_j^\kappa)\leq \sqrt{d}c(\kappa)\frac{\delta_j}{\epsilon_j}\to0\,,\quad \mbox{as $j\to+\infty$ }.
\label{(4.20)}
\end{equation}
Now let $j\in\mathbb{N}$ be large, $y\in(Q^\nu_{1-\eta}\cap\Pi^\nu)\backslash (N\cup N_j^\kappa)$, and consider the corresponding sequence $(s_j^y)$ as defined in \eqref{(4.16)}. By the definition of $\tilde{v}_{min,j}$ we deduce the existence of $\alpha_0:=\alpha_0(y)\in(\tau_j\mathbb{Z}^d\cap Q^\nu)\backslash\mathcal{I}_j^\kappa$ such that $y+s_j^y\nu\in\alpha_0+\tau_j[0,1)^d$  
and 
$\tilde{v}_{min,j}(\alpha_0)=\min\{v_j(\alpha_0), \{v_j(\alpha_0\pm\tau_j\xi)\}_{\xi\in S_d}\}\to0$ as $j\to+\infty$. 

Therefore, for every $\kappa>0$ and every $y\in(Q^\nu_{1-\eta}\cap\Pi^\nu)\backslash (N\cup N_j^\kappa)$ there exists $j_0:=j_0(\kappa,y)\in\mathbb{N}$ such that $\tilde{v}_{min,j}(\alpha_0)<\frac{\kappa}{4}$ for every $j\geq j_0$. Moreover, since $\alpha_0\in(\tau_j\mathbb{Z}^d\cap Q^\nu)\backslash\mathcal{I}_j^\kappa$ we also have $v_j(\alpha_0)<\frac{3}{4}\kappa$, $v_j(\alpha_0\pm\tau_j\xi)<\frac{5}{4}\kappa$ for every $j\geq j_0$ and every $\xi\in S_d$. 
This implies, by convexity, that
\begin{equation}
\hat{v}_j(y+s_j^y\nu)<\frac{5}{4}\kappa\,,\quad\mbox{ for every $j\geq j_0$ }.
\label{mod1}
\end{equation}

Since in the previous argument $\kappa>0$ was chosen arbitrarily, from now on we may assume that $0<\kappa<\frac{4}{9}$. As we already know, up to a possible subsequence, $\hat{v}_j^{\nu,y}\to1$ a.e., so that we can find $r_j^y,t_j^y\in I_\eta$ such that $r_j^y<s_j^y<t_j^y$ and
\begin{equation}
\hat{v}_j^{\nu,y}(r_j^y)>1-\kappa\,, \quad \hat{v}_j^{\nu,y}(t_j^y)>1-\kappa\,,\quad\mbox{ for $j$ large enough.}
\label{mod2}
\end{equation}
Now, for every fixed $y\in(Q^\nu_{1-\eta}\cap\Pi^\nu)\backslash (N\cup N_j^\kappa)$, by using the Cauchy Inequality and taking into account \eqref{mod1}-\eqref{mod2} we obtain
\begin{equation*}
\begin{split}
\frac{1}{2}&\int_{I_\eta}\frac{(\hat{v}_j^{\nu,y}(x)-1)^2}{\sigma_j} +\sigma_j(\dot{\hat{v}}_j^{\nu,y}(x))^2\,\mathrm{d}x \\ 
&\geq\int_{r_j^y}^{s_j^y}(1-\hat{v}_j^{\nu,y}(x))|\dot{\hat{v}}_j^{\nu,y}(x)|\,\mathrm{d}x+ \int_{s_j^y}^{t_j^y}(1-\hat{v}_j^{\nu,y}(x))|\dot{\hat{v}}_j^{\nu,y}(x)|\,\mathrm{d}x\\
 &\geq 2\int_{\frac{5}{4}\kappa}^{1-\kappa}(1-z)\,\mathrm{d}z = 1-\frac{5}{2}\kappa+\frac{9}{16}\kappa^2=:1-c_\kappa>0
\end{split}
\end{equation*}
for every $j\geq j_0$.

From \eqref{(4.20)} we deduce that, up to subsequences,
\begin{equation}
\chi_{(Q^\nu_{1-\eta}\cap\Pi^\nu)\backslash (N\cup N_j^\kappa)}\to1\quad \mbox{$\mathcal{H}^{d-1}$-a.e. in $Q^\nu_{1-\eta}\cap\Pi^\nu$}
\label{(4.22)}
\end{equation}
so that
\begin{equation*}
\mathop{\lim\inf}_{j\to+\infty}\left(\frac{1}{2}\int_{I_\eta}\frac{(\hat{v}_j^{\nu,y}(x)-1)^2}{\sigma_j} +\sigma_j(\dot{\hat{v}}_j^{\nu,y}(x))^2\,\mathrm{d}x\right)\chi_{(Q^\nu_{1-\eta}\cap\Pi^\nu)\backslash (N\cup N_j^\kappa)}\geq 1-c_\kappa
\end{equation*}
for $\mathcal{H}^{d-1}$-a.e. $y\in (Q^\nu_{1-\eta}\cap\Pi^\nu)$. Finally, from \eqref{(4.10)}, the Fatou's Lemma with \eqref{(4.22)} we obtain
\begin{equation*}
\begin{split}
&\lim_{j\to+\infty}G_{\sigma_j}(v_j,Q^\nu)\geq \mathop{\lim\inf}_{j\to+\infty}\int_{Q^\nu_{1-\eta}} \frac{(\hat{v}_j(x)-1)^2}{\sigma_j}+\sigma_j|\nabla\hat{v}_j(x)|^2\,\mathrm{d}x\\
&\geq \int_{Q^\nu_{1-\eta}}\mathop{\lim\inf}_{j\to+\infty}\left(\frac{1}{2}\int_{I_\eta}\frac{(\hat{v}_j^{\nu,y}(x)-1)^2}{\sigma_j} +\sigma_j(\dot{\hat{v}}_j^{\nu,y}(x))^2\,\mathrm{d}x\right)\chi_{(Q^\nu_{1-\eta}\cap\Pi^\nu)\backslash (N\cup N_j^\kappa)}\,\mathrm{d}\mathcal{H}^{d-1}(y)\\
&\geq (1-c_\kappa) \mathcal{H}^{d-1}(Q^\nu_{1-\eta}\cap\Pi^\nu)=(1-c_\kappa)(1-\eta)\,,
\end{split}
\end{equation*}
whence \eqref{claim2} follows letting $\kappa\to0$ and then $\eta\to0$.
\endproof

\section{The upper limit for the Griffith energy}\label{sec:upperbound}

In this section we prove the $\Gamma$-limsup inequality for the convergence stated in Theorem~\ref{teo:main}. Differently to what done in the previous sections, here we argue for the reference configuration $\Omega$. The constraint $u_\varepsilon \in \mathcal{A}_\delta^\mathrm{Dir}(\Omega; \Rd)$, $v_\varepsilon \in \mathcal{A}_\delta^\mathrm{Dir}(\Omega; \R)$ for the recovery sequence will follow from the  part of the density result Theorem~\ref{thm:density} concerning the treatment of Dirichlet boundary conditions.

\begin{prop}
Assume that $\lim_{\varepsilon\to 0} \frac{\delta}{\varepsilon}=0$, and let $u\in GSBD^2({\Omega})$. Then there exists a sequence $(u_\epsilon,v_\epsilon)\in \mathcal{A}_\delta^\mathrm{Dir}(\Omega; \Rd)\times \mathcal{A}^\mathrm{Dir}_\delta(\Omega; \R)$ such that $(u_\epsilon,v_\epsilon)\to(u,1)$ in measure on ${\Omega}\times{\Omega}$ and
\begin{equation}
\mathop{\lim\sup}_{\epsilon\to0} (E^\mathrm{Dir}_{\lambda,\theta})_\epsilon(u_\epsilon,v_\epsilon)\leq { G}^\mathrm{Dir}_{\lambda,\theta}(u)\,.
\label{claimprop}
\end{equation}
\label{prop:upperbound}
\end{prop}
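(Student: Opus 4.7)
The strategy combines a density reduction to piecewise smooth target functions with an explicit recovery-sequence construction based on the one-dimensional Ambrosio-Tortorelli optimal profile and a translation argument. By Theorem~\ref{thm:density} and the Cortesani-Toader-type refinement recalled around \eqref{1412191008}, any $u \in GSBD^2(\Omega)$ can be approximated in measure by a sequence $(u_n) \subset \mathcal{W}(\Omega;\R^d)$ of piecewise smooth functions with $u_n = u_0$ in a neighborhood of $\partial_D\Omega$, $\overline{J}_{u_n}\subset\subset\Omega$ a finite union of pairwise disjoint closed $(d{-}1)$-simplexes, $u_n$ of class $C^\infty$ on $\Omega\setminus\overline{J}_{u_n}$, and such that $\mathcal{G}^{\mathrm{Dir}}_{\lambda,\theta}(u_n)\to \mathcal{G}^{\mathrm{Dir}}_{\lambda,\theta}(u)$. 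By lower semicontinuity of $\Gamma\text{-}\limsup (E^{\mathrm{Dir}}_{\lambda,\theta})_\epsilon$ with respect to the convergence in measure and a standard diagonal argument, we may then assume that $u$ is already piecewise smooth in this sense.

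Let $\phi_\epsilon:[0,+\infty)\to [0,1]$ be a modified one-dimensional optimal profile, i.e.\ $\phi_\epsilon(0)=0$, $\phi_\epsilon\equiv 1$ for $t\geq T_\epsilon$, with $T_\epsilon\to 0$ and $T_\epsilon/\epsilon\to +\infty$, satisfying
\[
\int_0^{T_\epsilon}\Big(\frac{(1-\phi_\epsilon(t))^2}{\epsilon}+\epsilon\,\dot\phi_\epsilon(t)^2\Big)\,\mathrm{d}t = 1+o(1),
\]
as in \cite[Proposition~4.2]{BBZ}. Set $\tilde v_\epsilon(x):=\phi_\epsilon(\mathrm{dist}(x,\overline{J}_u))$ and let $v_\epsilon\in \mathcal{A}_\delta(\Omega;\R)$ be its nodal discretization on $\delta\Z^d\cap\Omega$. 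Since $\overline{J}_u\subset\subset\Omega$, for $\epsilon$ small one has $v_\epsilon\equiv 1$ near $\partial_D\Omega$, so $v_\epsilon\in\mathcal{A}_\delta^{\mathrm{Dir}}(\Omega;\R)$. For the displacement, for $y\in[0,1)^d$ define $u_\epsilon^y(\alpha):=u(\alpha+\delta y)$ at nodes away from $\partial_D\Omega$, and force $u_\epsilon^y(\alpha):=u_0(\alpha)$ on a discrete collar around $\partial_D\Omega$; since $u=u_0$ in a full neighborhood of $\partial_D\Omega$, this collar correction affects at most $O(\delta^{1-d})$ nodes with $|D_{\delta,\xi}u_\epsilon^y(\alpha)|=O(\delta)$, contributing only $O(\delta)$ to the bulk energy. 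The shift $y_\epsilon\in[0,1)^d$ will be selected at the end.

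The nodes $\alpha$ contributing to $F_\epsilon(u_\epsilon^y,v_\epsilon)$ and $F^{\mathrm{div}}_\epsilon(u_\epsilon^y,v_\epsilon)$ satisfy $\mathrm{dist}(\alpha,\overline{J}_u)\geq T_\epsilon\gg \delta$, so the full stencils $\alpha \pm \delta\xi$, $\xi\in S_d$, lie in one connected component of $\Omega\setminus\overline{J}_u$, where $u$ is $C^\infty$. A Taylor expansion yields $\delta^{-1} D_\delta^\xi u_\epsilon^y(\alpha)=\langle \mathcal{E}u(\alpha+\delta y)\,\xi/|\xi|,\xi/|\xi|\rangle+O(\delta)$ and the analogous estimate for the discrete divergence. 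Averaging over $y\in[0,1)^d$ and using Fubini to convert the discrete sums into continuous integrals (along the lines of \cite[Proposition~4.4]{AFG}), then invoking Remark~\ref{rem:summation} with the weights \eqref{0912191307}, one obtains
\[
\lim_{\epsilon\to 0}\int_{[0,1)^d}\Big(\lambda F_\epsilon(u_\epsilon^y,v_\epsilon)+\theta F^{\mathrm{div}}_\epsilon(u_\epsilon^y,v_\epsilon)\Big)\,\mathrm{d}y =\lambda\int_\Omega |\mathcal{E}u|^2 \,\dx+\Big(\frac{\lambda}{2}+\theta\Big)\int_\Omega |{\rm div}\,u|^2 \,\dx.
\]
The Modica-Mortola term $G_\epsilon(v_\epsilon)$ does not depend on $y$; by slicing normal to each $(d{-}1)$-simplex of $\overline{J}_u$ and exploiting the product structure $\tilde v_\epsilon=\phi_\epsilon\circ\mathrm{dist}(\cdot,\overline{J}_u)$, it reduces to the one-dimensional optimal-profile computation and yields $\limsup_\epsilon G_\epsilon(v_\epsilon)\leq \mathcal{H}^{d-1}(J_u)$. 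Selecting by the mean-value principle $y_\epsilon\in[0,1)^d$ achieving the bulk bound, Lemma~\ref{lemmatransl}(ii) guarantees $u_\epsilon:=u_\epsilon^{y_\epsilon}\to u$ in $L^1(\Omega;\R^d)$, hence in measure, completing the construction.

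The main obstacle is the control of the bulk energy close to $\overline{J}_u$, where the discrete differences $|D_{\delta,\xi}u_\epsilon|$ can be of order $1/\delta$. This is killed by arranging $\{v_\epsilon=0\}$ to be a tubular neighborhood of $\overline{J}_u$ of width $T_\epsilon\gg \delta$: on the complementary region $\{v_\epsilon>0\}$ the entire discrete stencil lies in one component of $\Omega\setminus\overline{J}_u$ and $u$ is smooth, so the Riemann-sum approximation is valid. The separation of scales $\delta/\epsilon\to 0$ is required precisely for this matching of the discrete stencil with the transition layer of $\phi_\epsilon$, as well as to ensure the convergence of the discrete slicing in the Modica-Mortola computation.
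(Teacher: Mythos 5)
Your outline follows the same route as the paper (reduction via Theorem~\ref{thm:density} and the Cortesani--Toader refinement, a one-dimensional optimal profile for $v_\epsilon$, nodal sampling of the displacement combined with the averaging/translation device of Lemma~\ref{lemmatransl}), but there is a genuine gap exactly at the point you describe as the main obstacle. With $v_\epsilon$ the nodal discretization of $\phi_\epsilon(\dist(\cdot,\overline{J}_u))$, where $\phi_\epsilon(0)=0$ and $\phi_\epsilon\equiv 1$ on $[T_\epsilon,+\infty)$, the set $\{v_\epsilon=0\}$ is \emph{not} a tube of width $T_\epsilon$: $\phi_\epsilon$ vanishes only at $t=0$, so every node at positive distance from $\overline{J}_u$ carries a strictly positive weight. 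Hence your claim that the nodes contributing to $F_\epsilon$ and $F^{\rm div}_\epsilon$ satisfy $\dist(\alpha,\overline{J}_u)\geq T_\epsilon\gg\delta$ is false, and the Taylor/Riemann-sum step breaks down for the $O(\delta^{1-d})$ nodes whose stencil $[\alpha-\delta\xi,\alpha+\delta\xi]$ crosses $\overline{J}_u$. For such a node $|D_{\delta,\xi}u^y_\epsilon(\alpha)|$ is of the order of the jump amplitude (recall that \eqref{diffquot1} contains no division by $\delta$), so its contribution to \eqref{energiesF} is $\delta^{d-2}\,v_\epsilon(\alpha)^2\,O(1)$; summing over the $\sim\delta^{-(d-1)}$ nodes adjacent to the jump gives a total of order $v_\epsilon(\alpha)^2/\delta$, and for the natural choice $\phi_\epsilon(t)=f_\eta(t/\epsilon)$ this is $\sim\delta/\epsilon^2$, which does not vanish under the standing assumption $\delta/\epsilon\to 0$ (it is precisely the quantity that forces the stronger scaling $\delta/\epsilon^2\to0$ in the non-interpenetration part of the paper); making $\phi_\epsilon$ vanish to higher order at $0$ only changes the exponent and still fails for suitable admissible scalings. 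If instead you read your construction as having $\phi_\epsilon\equiv 0$ on $[0,T_\epsilon]$ with $T_\epsilon/\epsilon\to+\infty$, then the Modica--Mortola term pays at least $c\,(T_\epsilon/\epsilon)\,\mathcal H^{d-1}(J_u)\to+\infty$ and the surface bound is lost. Either way the argument as written does not close.

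The repair is the one used in the paper: insert an \emph{exact} zero plateau of width slightly larger than the stencil, i.e.\ build $v_\epsilon$ from a profile $h_\epsilon$ with $h_\epsilon\equiv 0$ on $[0,\gamma_\epsilon+\sqrt{d}\,\delta]$ and $h_\epsilon(t)=f_\eta\bigl((t-\gamma_\epsilon-\sqrt{d}\,\delta)/\epsilon\bigr)$ beyond, with $\gamma_\epsilon/\epsilon\to0$. The extra Modica--Mortola cost of the plateau is $O\bigl((\gamma_\epsilon+\delta)/\epsilon\bigr)\mathcal H^{d-1}(J_u)\to0$ (this is where $\delta/\epsilon\to0$ is really used), while now every node with $v_\epsilon(\alpha)\neq 0$ has its whole stencil at positive distance from $\overline{J}_u$, so the fundamental-theorem/Jensen estimate as in \eqref{(4.77)} applies; one must also check that the plateau width exceeds the stencil radius plus the translation $\delta|y|$, which is why the paper additionally replaces the displacement by $u(1-\phi_\epsilon)$, vanishing on a slightly smaller tube, so that the translated samples never see the jump and the recovery displacement is globally Lipschitz. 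A smaller point: your boundary-collar estimate uses $|u_0(\alpha+\delta y)-u_0(\alpha)|=O(\delta)$, which is not available for $u_0\in H^1$ only; it is cleaner to exploit, as the paper does, that the approximants equal $u_0$ on a full neighborhood of $\partial_D\Omega$ and that $\overline{J}_u$ is compactly contained in $\Omega$, so no separate energy correction near $\partial_D\Omega$ is needed.
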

\begin{proof}
In view of Theorem~\ref{thm:density} and remarks below, by a diagonal argument it is not restrictive to assume that $u\in \mathcal{W}(\Omega;\Rd)$ and that $J_u$ is a closed subset of the hyperplane $\Pi^{e_d}=\{x_d=0\}$, that we denote by $K$.
To fix the notation we argue for $d=3$, the case $d=2$ being analogous.

We recall from \cite[(4.23)-(4.24)]{BBZ} the following fact about the optimal profile problem for the Ambrosio-Tortorelli functional: for fixed $\eta>0$, there exist $T_\eta>0$ and $f_\eta\in C^2([0,+\infty))$ such that $f_\eta(0)=0$, $f_\eta(t)=1$ for $t\geq T_\eta$, $f_\eta'(T_\eta)=f_\eta''(T_\eta)=0$, and 
\begin{equation*}
\int_0^{T_\eta} (f_\eta(t)-1)^2 + (f_\eta'(t))^2\,\mathrm{d}t\leq1+\eta\,.
\end{equation*}
Let  $x=(x',x_d)$ for each $x \in \Rd$, and $K_h:=\{x\in\Pi^{e_d}:\, {\rm dist}(x,K)< h\}$ for every $h>0$. 

Let $T>T_\eta$ and $\gamma_\epsilon>0$ be a sequence such that $\gamma_\epsilon/\epsilon\to0$ as $\epsilon\to0$. We set
\begin{equation*}
\begin{split}
A_\epsilon & :=\{x\in\mathbb{R}^3\colon x'\in K_{\epsilon+\sqrt{3}\delta},\, |x_d|<\gamma_\epsilon+\sqrt{3}\delta\}\,, \\
A'_\epsilon & :=\{x\in\mathbb{R}^3\colon x' \in K_{2\epsilon+\sqrt{3}\delta},\, |x_d| <\gamma_\epsilon+\sqrt{3}\delta+\epsilon T\}\,,\\
B_\epsilon & :=\{x\in\mathbb{R}^3\colon x' \in K_{\epsilon/2},\, |x_d|<\gamma_\epsilon/2\}, \ B'_\epsilon :=\{x\in\mathbb{R}^3\colon x'\in K_{\epsilon}\,,\, |x_d|<\gamma_\epsilon\}\,,
\end{split}
\end{equation*}
and $A_{\epsilon,\delta}:=A_\epsilon\cap\delta\mathbb{Z}^3$, $A'_{\epsilon,\delta}:=A'_\epsilon\cap\delta\mathbb{Z}^3$.
Notice that, for $\varepsilon$ small,
\begin{equation*}
K \subset B_\varepsilon\subset\subset B'_\varepsilon\subset\subset A_\varepsilon\subset\subset A'_\varepsilon\subset\subset \Omega\,,
\end{equation*} 
recalling that $K \subset \Omega$.
Let $\phi_\epsilon$ be a smooth cut-off function between $B_\epsilon$ and $B'_\epsilon$, and set
\begin{equation*}
u_\epsilon(x):=u(x)(1-\phi_\epsilon(x))\,.
\end{equation*}
Since $u\in W^{1,\infty}({\Omega}\backslash J_u;\R^3)$ we have $u_\epsilon\in W^{1,\infty}({\Omega};\R^3)$. Moreover, since $A'_\varepsilon$ is a compact set in $\Omega$ and $u=u_0$ in a neighborhood of $\dod$, also $u_\varepsilon=u_0$ in a neighborhood of $\dod$. 
Note also that, by the Lebesgue Dominated Convergence Theorem, $u_\epsilon\to u$ in $L^1(\Omega;\mathbb{R}^3)$. If $\psi_\epsilon$ is a cut-off function between $K_{\epsilon+\sqrt{3}\delta}$ and $K_{2\epsilon+\sqrt{3}\delta}$, we define
\begin{equation}\label{1412191104}
v_\epsilon(x):=\psi_\epsilon(x') h_\epsilon(x_d) + 1 - \psi_\epsilon(x')\,,
\end{equation}
where the function $h_\epsilon:[0,+\infty)\to\mathbb{R}$ is given by
\begin{equation*}
h_\epsilon(t):=
\begin{cases}
0 & \mbox{ if }t<\gamma_\epsilon + \sqrt{3}\delta\\
f_\eta(\frac{t-(\gamma_\epsilon+\sqrt{3}\delta)}{\epsilon}) &  \mbox{ if }\gamma_\epsilon + \sqrt{3}\delta\leq t\leq\gamma_\epsilon + \sqrt{3}\delta+\epsilon T\\
1 & \mbox{ if } t\geq \gamma_\epsilon + \sqrt{3}\delta+\epsilon T\,.
\end{cases}
\end{equation*}
By construction, $v_\epsilon\in W^{1,\infty}(\Omega)\cap C^0(\Omega)\cap C^2(\Omega\backslash A_\epsilon)$ and $v_\epsilon\to1$ in $L^1(\Omega)$.

We start proving that there exists a sequence $(\bar{u}_\epsilon,\bar{v}_\epsilon)\in \mathcal{A}^{\mathrm{Dir}}_\delta(\Omega;\R^3)\times \mathcal{A}^{\mathrm{Dir}}_\delta(\Omega; \R)$ converging in measure to $(u,1)$ on $\Omega\times\Omega$  such that
\begin{equation}
\mathop{\lim\sup}_{\epsilon\to0} F_\epsilon(\bar{u}_\epsilon, \bar{v}_\epsilon)\leq \sum_{\xi\in S_3}\sigma_{|\xi|}\int_{\Omega}\frac{1}{|\xi|^4}|\langle \mathcal{E}u(x)\xi,\xi\rangle|^2\,\mathrm{d}x\,, 
\label{part1.1}
\end{equation}
and
\begin{equation}
\mathop{\lim\sup}_{\epsilon\to0} F_\epsilon^{\rm div}(\bar{u}_\epsilon, \bar{v}_\epsilon)\leq \int_{\Omega} |{\rm div}\,u(x)|^2\, \mathrm{d}x\,.
\label{part1.2}
\end{equation}

Setting
\begin{equation*}
\bar{v}_\epsilon(x):=\min\{v_\epsilon(\alpha),1\}\,,\quad x\in\alpha+[0,\delta)^3\,, \,\alpha\in\Omega_\delta\,,
\end{equation*}
since $F_\epsilon(\cdot, \bar{v}_\epsilon)\leq F_\epsilon(\cdot,1)$ and $F_\epsilon^{\rm div}(\cdot, \bar{v}_\epsilon)\leq F_\epsilon^{\rm div}(\cdot,1)$, it will be sufficient to prove both \eqref{part1.1} and \eqref{part1.2} for the pair of admissible functions $(\bar{u}_\epsilon,1)$. Notice that $\bar{v}_\epsilon \in \mathcal{A}^\mathrm{Dir}_\delta(\Omega;\R)$ by \eqref{1412191104} and since $A'_\varepsilon$ is a compact subset of $\Omega$.

Let $\xi\in S_3$ be fixed. Define
\begin{equation}
\Omega_\delta^\xi:= \{x\in\mathbb{R}^3\colon [x-\delta\xi,x+\delta\xi] 
\subset\Omega\}\,.
\label{rangec}
\end{equation}
Since $v_\epsilon(\alpha)=0$ for all $\alpha\in A_{\epsilon,\delta}$, let $x\in \Omega_\delta^\xi\backslash A_\epsilon$: by construction, $x\pm\delta\xi\in \Omega_\delta^\xi\backslash B'_\epsilon$ and $u_\epsilon=u$ on $\Omega\backslash B'_\epsilon$. Thus, by using the Fundamental Theorem of Calculus and Jensen's inequality we deduce that
\begin{equation}
\begin{split}
\frac{1}{\delta^2}\int_{ \Omega_\delta^\xi\backslash B'_\epsilon}|D_\delta^\zeta u(x)|^2\,\mathrm{d}x & = \int_{ \Omega_\delta^\xi\backslash B'_\epsilon}\left|\left\langle \frac{u(x+\delta\zeta)-u(x)}{\delta},\frac{\zeta}{|\zeta|^2}\right\rangle\right|^2\,\mathrm{d}x\\
&=\int_{ \Omega_\delta^\xi\backslash B'_\epsilon}\left|\frac{1}{\delta|\zeta|^2}\int_0^\delta \langle \mathcal{E}u(x+t\zeta)\zeta,\zeta\rangle\,\mathrm{d}t\right|^2\mathrm{d}x\\
&\leq \frac{1}{|\zeta|^4} \int_{ \Omega_\delta^\xi\backslash B'_\epsilon}\frac{1}{\delta}\int_0^\delta |\langle \mathcal{E}u(x+t\zeta)\zeta,\zeta\rangle|^2\,\mathrm{d}t\mathrm{d}x\\
&\leq \frac{1}{|\zeta|^4}\int_{ \Omega}|\langle \mathcal{E}u(x)\zeta,\zeta\rangle|^2\,\mathrm{d}x
\end{split}
\label{(4.77)}
\end{equation}
for every $\zeta\in\{\pm\xi\}$.  
Moreover, setting $\Omega_\delta^{\rm div}:=\Omega_\delta^{e_1}\cap\Omega_\delta^{e_2}\cap\Omega_\delta^{e_3}$, by arguing as for \cite[(4.9)]{AFG}, we have that 
\begin{equation}
\frac{1}{\delta}({\rm div}_\delta^{k_1e_1,k_2e_2,k_3e_3}{u})\chi_{\Omega_\delta^{\rm div}\backslash B'_\epsilon}\to {\rm div}\,u \quad\mbox{ in $L^2(\Omega)$,}
\label{(4.88)}
\end{equation}
for every $(k_1,k_2,k_3)\in\{-1,1\}^3$.

For simplicity, we prove \eqref{(4.88)} in the case $(k_1, k_2, k_3)=(1,1,1)$. We first notice that
\begin{equation*}
\begin{split}
&\left\|\frac{1}{\delta}({\rm div}_\delta^{e_1,e_2,e_3}{u})\chi_{\Omega_\delta^{\rm div}\backslash B'_\epsilon}-{\rm div}\,u\right\|_{L^2(\Omega)}^2\\
&=\int_{\Omega_\delta^{\rm div}\backslash B_\epsilon'}\left(\frac{1}{\delta}\int_0^\delta\sum_{k=1}^3\langle(\mathcal{E}u(x+se_k)-\mathcal{E}u(x))e_k,e_k\rangle\,\mathrm{d}s \right)^2\hspace{-0.45em}\mathrm{d}x +\int_{\Omega\backslash(\Omega_\delta^{\rm div}\backslash B_\epsilon')}({\rm div}\,u(x))^2\,\mathrm{d}x\,.
\end{split}
\end{equation*}
Now, since $|\Omega\backslash(\Omega_\delta^{\rm div}\backslash B_\epsilon')|\to0$ as $\epsilon\to0$, with the absolute continuity of the integral, Jensen's inequality and the Cauchy-Schwarz inequality we deduce that
\begin{equation*}
\begin{split}
&\left\|\frac{1}{\delta}({\rm div}_\delta^{e_1,e_2,e_3}{u})\chi_{\Omega_\delta^{\rm div}\backslash B'_\epsilon}{-}{\rm div}\,u\right\|_{L^2(\Omega)}^2 \hspace{-0.5em} \hspace{-0.5em}\leq \int_{\Omega_\delta^{\rm div}\backslash B_\epsilon'}\hspace{-0.3em}\left(\sum_{k=1}^3\frac{3}{\delta}\int_0^\delta|\mathcal{E}u(x{+}se_k){-}\mathcal{E}u(x)|^2\,\mathrm{d}s\hspace{-0.3em}\right)\hspace{-0.3em}\mathrm{d}x{+} o(1)
\end{split}
\end{equation*}
as $\epsilon\to0$. Finally, as a consequence of Fubini's theorem we have
\begin{equation*}
\begin{split}
&\left\|\frac{1}{\delta}({\rm div}_\delta^{e_1,e_2,e_3}{u})\chi_{\Omega_\delta^{\rm div}\backslash B'_\epsilon}{-}{\rm div}\,u\right\|_{L^2(\Omega)}^2 \hspace{-0.5em} \leq \frac{3}{\delta}\int_0^\delta\left(\sum_{k=1}^3\int_{\Omega}|\mathcal{E}u(x{+}se_k){-}\mathcal{E}u(x)|^2\,\mathrm{d}x\hspace{-0.3em}\right)\hspace{-0.3em}\mathrm{d}s{+}o(1)
\end{split}
\end{equation*}
as $\epsilon\to0$, whence \eqref{(4.88)} follows from the continuity of translations in $L^2$.

With the estimates \eqref{(4.77)}, by summing over $\xi\in S_3$ and taking into account Remark~\ref{rem:summation} we infer that
\begin{equation}
\begin{split}
\mathop{\lim\sup}_{\epsilon\to0}\left(\frac{1}{2}\sum_{\xi\in S_3}\sigma_{|\xi|}\int_{ \Omega_\delta^\xi\backslash B'_\epsilon}\frac{1}{\delta^2}\left|D_{\delta,\xi}{u}(x)\right|^2\,\mathrm{d}x\right)&\leq \sum_{\xi\in S_3}\sigma_{|\xi|}\int_{\Omega}\frac{1}{|\xi|^4}|\langle \mathcal{E}u(x)\xi,\xi\rangle|^2\,\mathrm{d}x\,.
\end{split}
\label{contestim1}
\end{equation}
From \eqref{(4.88)}, we deduce that
\begin{equation}
\mathop{\lim\sup}_{\epsilon\to0} \left(\frac{1}{8}\int_{\Omega_\delta^{\rm div}\backslash B'_\epsilon}\frac{1}{\delta^2}\left|{\rm Div}_{\delta}{u}(x)\right|^2\,\mathrm{d}x\right)\leq  \int_\Omega |{\rm div}\,u(x)|^2\, \mathrm{d}x\,.
\label{contestim2}
\end{equation}

Now, we adapt to our case the argument of the proof of \cite[Proposition~4.4]{AFG}, which combined with \eqref{contestim1}-\eqref{contestim2} will give \eqref{part1.1}-\eqref{part1.2}.

For every $y\in(0,1]^3$, we introduce the sequence $T_y^\delta u_\epsilon$ as defined in \eqref{transldiscr} for $d=3$, which satisfies $T_y^\delta u_\epsilon(x)=u_\epsilon(\delta y +\alpha)$ for every $x\in\alpha+(0,\delta]^3$, $\alpha\in\delta\Z^3$.

Now, since for $\alpha\in\delta\Z^3$ and $\xi\in\Z^3$ we have $\delta\lfloor\frac{\alpha}{\delta}\rfloor=\alpha$ and $\delta\lfloor\frac{\alpha+\delta\xi}{\delta}\rfloor=\alpha+\delta\xi$, we get 
\begin{equation}\label{0912191038}
\begin{split}
& \int_{(0,1)^3} \left(F_\epsilon(T_y^\delta u_\epsilon,1)+F_\epsilon^{\rm div}(T_y^\delta u_\epsilon,1)\right)\,\mathrm{d}y\\
 & \leq \frac{1}{2}\sum_{\xi\in S_3}\sigma_{|\xi|}\hspace{-0.7em}\sum_{\alpha\in R_\delta^\xi(\Omega)\backslash A_\epsilon}\hspace{-0.7em}\int_{(0,1)^3}\hspace{-0.7em}\delta\left|D_{\delta,\xi}{u}(\alpha+\delta y)\right|^2\,\mathrm{d}y + \frac{1}{8}\hspace{-0.7em}\sum_{\alpha\in R_\delta^{\rm div}(\Omega)\backslash A_\epsilon}\hspace{-0.5em}\int_{(0,1)^3}\hspace{-0.7em}\delta\left|{\rm Div}_{\delta}{u}(\alpha+\delta y)\right|^2\,\mathrm{d}y\\
&=\frac{1}{2}\sum_{\xi\in S_3}\sigma_{|\xi|}\hspace{-0.7em}\sum_{\alpha\in R_\delta^\xi(\Omega)\backslash A_\epsilon}\hspace{-0.7em}\int_{\alpha+(0,\delta)^3}\frac{1}{\delta^2}\left|D_{\delta,\xi}{u}(y)\right|^2\,\mathrm{d}y +\frac{1}{8}\hspace{-0.7em}\sum_{\alpha\in R_\delta^{\rm div}(\Omega)\backslash A_\epsilon}\hspace{-0.5em}\int_{\alpha+(0,\delta)^3}\frac{1}{\delta^2}\left|{\rm Div}_{\delta}{u}(y)\right|^2\,\mathrm{d}y\\
&\leq \frac{1}{2}\sum_{\xi\in S_3}\sigma_{|\xi|}\int_{ \Omega_\delta^\xi\backslash A_\epsilon}\frac{1}{\delta^2}\left|D_{\delta,\xi}{u}(y)\right|^2\,\mathrm{d}y + \frac{1}{8}\int_{\Omega_\delta^{\rm div}\backslash A_\epsilon}\frac{1}{\delta^2}\left|{\rm Div}_{\delta}{u}(y)\right|^2\,\mathrm{d}y\,,
\end{split}
\end{equation}
whence, with \eqref{contestim1}-\eqref{contestim2}, we infer that
\begin{equation}
\mathop{\lim\sup}_{\epsilon\to0}\int_{(0,1)^3} \left(F_\epsilon(T_y^\delta u_\epsilon,1)+F_\epsilon^{\rm div}(T_y^\delta u_\epsilon,1)\right)\,\mathrm{d}y  \leq  {\mathcal{G}}_{\lambda,\theta}(u)\leq M\,.
\label{(4.13)}
\end{equation}

Moreover, with fixed $\eta>0$, \eqref{(4.13)} implies that the set
\begin{equation*}
\begin{split}
C_\eta^\epsilon:=\biggl\{z\in(0,1)^3:\, F_\epsilon(T_z^\delta u_\epsilon,1)&+F_\epsilon^{\rm div}(T_z^\delta u_\epsilon,1)\\
&\leq \int_{(0,1)^3}  \left(F_\epsilon(T_y^\delta u_\epsilon,1)+F_\epsilon^{\rm div}(T_y^\delta u_\epsilon,1)\right)\,\mathrm{d}y+\eta \biggr\}
\end{split}
\end{equation*}
has strictly positive Lebesgue measure for $\epsilon$ small enough. Indeed, for $\epsilon$ small enough and with \eqref{(4.13)} we deduce that
\begin{equation*}
|(0,1)^3\backslash C_\eta^\epsilon|\leq \frac{\int_{(0,1)^3}  \left(F_\epsilon(T_y^\delta u_\epsilon,1)+F_\epsilon^{\rm div}(T_y^\delta u_\epsilon,1)\right)\,\mathrm{d}y}{\int_{(0,1)^3}  \left(F_\epsilon(T_y^\delta u_\epsilon,1)+F_\epsilon^{\rm div}(T_y^\delta u_\epsilon,1)\right)\,\mathrm{d}y+\eta}\leq \frac{M}{M+\eta}<1\,,
\end{equation*}
so that $|C_\eta^\epsilon|\geq 1-\frac{M}{M+\eta}>0$. Now, as a consequence of Lemma~\ref{lemmatransl}(ii) we deduce that, for every $\epsilon>0$, there exists $z_\epsilon\in C_\eta^\epsilon$ such that $T_{z_\epsilon}^\delta u_\epsilon\to u$ in $L^1$ and
\begin{equation}
F_\epsilon(T_{z_\epsilon}^\delta u_\epsilon,1)+F_\epsilon^{\rm div}(T_{z_\epsilon}^\delta u_\epsilon,1)\leq \int_{(0,1)^3} \left(F_\epsilon(T_y^\delta u_\epsilon,1)+F_\epsilon^{\rm div}(T_y^\delta u_\epsilon,1)\right)\,\mathrm{d}y+\eta\,.
\label{(4.14)}
\end{equation}
Finally, setting $\bar{u}_\epsilon:=T_{z_\epsilon}^\delta u_\epsilon$, with \eqref{(4.13)}-\eqref{(4.14)} we obtain
\begin{equation}
\begin{split}
&\mathop{\lim\sup}_{\epsilon\to0}\left(F_\epsilon(\bar{u}_\epsilon,\bar{v}_\epsilon)+F_\epsilon^{\rm div}(\bar{u}_\epsilon,\bar{v}_\epsilon)\right)\\
&\leq\mathop{\lim\sup}_{\epsilon\to0}\int_{(0,1)^3} \left(F_\epsilon(T_y^\delta u_\epsilon,1)+F_\epsilon^{\rm div}(T_y^\delta u_\epsilon,1)\right)\,\mathrm{d}y+\eta\\
&\leq \sum_{\xi\in S_3}\sigma_{|\xi|}\int_{\Omega}\frac{1}{|\xi|^4}|\langle \mathcal{E}u(x)\xi,\xi\rangle|^2\,\mathrm{d}x+\theta\int_{\Omega} |{\rm div}\,u(x)|^2\, \mathrm{d}x + \eta\,,
\end{split}
\label{elasticuppxi}
\end{equation}
whence the assertion follows letting $\eta\to0$. 
We observe that $\bar{u}_\epsilon \in \mathcal{A}^\mathrm{Dir}_\delta(\Omega;\Rd)$, since $u_\varepsilon=u_0$ in a neighborhood of $\dod$.

We provide now an estimate for $G_\epsilon(\bar{v}_\epsilon)$. Setting,  for $\alpha \in \Omega_\delta$ such that $\alpha+\delta e_k\in\Omega$,
\begin{equation*}
G_\epsilon^{\alpha}(v):=\frac12\delta^3\Bigg(\frac{1}{\epsilon}(v(\alpha)-1)^2 + \epsilon\sum_{\substack{k=1\\}}^3\left(\frac{v(\alpha+\delta e_k)-v(\alpha)}{\delta}\right)^2\Bigg)\,,
\end{equation*}
we have (below we have to restrict the sums over $\alpha \in \Omega_\delta$ such that $\alpha+\delta e_k\in\Omega$, we omit it to ease the notation) 
\begin{equation}
\begin{split}
& \sum_{\alpha\in \Omega_\delta}G_\epsilon^{\alpha}(\bar{v}_\epsilon)  \leq \sum_{\alpha\in  \Omega_\delta}G_\epsilon^{\alpha}({v}_\epsilon)\\
& = \sum_{\alpha\in \Omega_\delta\backslash A'_\epsilon}\frac12\delta^3\left(\frac{1}{\epsilon}(v_\epsilon(\alpha)-1)^2 + \epsilon\sum_{k=1}^3\left(\frac{v_\epsilon(\alpha+\delta e_k)-v_\epsilon(\alpha)}{\delta}\right)^2\right)\\
& + \sum_{\alpha\in (A'_{\epsilon})_\delta\backslash (A_{\epsilon})_{\delta}}\frac12\delta^3\left(\frac{1}{\epsilon}(v_\epsilon(\alpha)-1)^2 + \epsilon\sum_{k=1}^3\left(\frac{v_\epsilon(\alpha+\delta e_k)-v_\epsilon(\alpha)}{\delta}\right)^2\right)\\
& + \sum_{\alpha\in (A_{\epsilon})_{\delta}}\frac12\delta^3\left(\frac{1}{\epsilon}(v_\epsilon(\alpha)-1)^2 + \epsilon\sum_{k=1}^3\left(\frac{v_\epsilon(\alpha+\delta e_k)-v_\epsilon(\alpha)}{\delta}\right)^2\right)\,.
\end{split}
\label{stim0}
\end{equation}

The argument now follows the proof of \cite[Proposition~4.2]{BBZ}, so that we will only recall briefly the main steps.

First, we note that
\begin{equation}
\sum_{\alpha\in \Omega_\delta\backslash A'_\epsilon}G_\epsilon^{\alpha}({v}_\epsilon)=0
\label{stim1}
\end{equation}
since for any $\alpha\in \Omega_\delta\backslash A'_\epsilon$ we have that $v_\epsilon(\alpha+\delta e_k)=v_\epsilon(\alpha)=1$ for every $k=1,2,3$ from the definition of $h_\epsilon$. Then, by exploiting also the regularity of $v_\epsilon$, it can be proved that (for $B_\delta:=B\cap \delta \mathbb{Z}^3$ for every $B\subset \R^3$ Borel)
\begin{equation}
\sum_{\alpha\in (A_{\epsilon})_{\delta}}G_\epsilon^{\alpha}({v}_\epsilon)\leq C \left(\frac{\gamma_\epsilon+\delta}{\epsilon}\right)\mathcal{H}^2(K_{\epsilon+\sqrt{3}\delta})\to0\,.
\label{stim2}
\end{equation}
Indeed, we have that $v_\epsilon(\alpha)=v_\epsilon(\alpha+\delta e_k)=0$, $k=1,2,3$ for every $\alpha\in (A_{\epsilon})_{\delta}$ such that $\alpha+\delta e_k\in (A_{\epsilon})_{\delta}$ for every $k=1,2,3$. This implies that 
\begin{equation*}
\sum_{\{\alpha\in (A_{\epsilon})_{\delta}:\,\, \alpha+\delta e_k\in A_{\epsilon,\delta},\,\forall k=1,2,3\}}G_\epsilon^{\alpha}({v}_\epsilon)=\#(A_{\epsilon})_{\delta}\frac{\delta^3}{\epsilon}\leq C \left(\frac{\gamma_\epsilon+\delta}{\epsilon}\right)\mathcal{H}^2(K_{\epsilon+\sqrt{3}\delta})\,.
\end{equation*}
On the other hand,
\begin{equation*}
\sum_{\{\alpha\in(A_{\epsilon})_{\delta}\colon \alpha+\delta e_k\in (A'_{\epsilon})_\delta \backslash (A_{\epsilon})_{\delta},\,\forall k=1,2,3\}}G_\epsilon^{\alpha}({v}_\epsilon)=\#(\partial A_{\epsilon})_\delta\frac{\delta^3}{\epsilon}\leq C\left(\frac{\delta}{\epsilon}\right)\mathcal{H}^2(K_{\epsilon+\sqrt{3}\delta})\,.
\end{equation*}
Finally, taking into account the fact that $f_\eta$ is a Lipschitz function, we obtain
\begin{equation}
\sum_{\alpha\in (A'_{\epsilon})_\delta\backslash (A_{\epsilon})_{\delta}}G_\epsilon^{\alpha}({v}_\epsilon)\leq \left(1+\eta+C\frac{\delta}{\epsilon}\right)\mathcal{H}^2(K_{\epsilon+\sqrt{3}\delta})+C\delta\,.
\label{stim3}
\end{equation}
Now, collecting the estimates \eqref{stim0}, \eqref{stim1},\eqref{stim2}, \eqref{stim3}  
we deduce that
\begin{equation}
\mathop{\lim\sup}_{\epsilon\to0} G_\epsilon(\bar{v}_\epsilon)\leq (1+\eta)\mathcal{H}^2(J_u\cap\Omega)\,,
\label{upboundjump}
\end{equation}
whence the desired bound follows by the arbitrariness of $\eta>0$.

In addition, \eqref{upboundjump} implies that
\begin{equation*}
\mathop{\lim\sup}_{\epsilon\to0}\frac{1}{\varepsilon}\int_\Omega(\bar{v}_\epsilon(x)-1)^2\,\mathrm{d}x\leq C\,,
\end{equation*}
then $\bar{v}_\epsilon\to1$ in $L^1(\Omega)$.
\end{proof}

\begin{oss}
The argument for the proof of Proposition~\ref{prop:upperbound} shows that an analogous (local version) of the upper bound inequality \eqref{upboundjump} can be obtained also under the assumption that $l:=\displaystyle\lim_{\epsilon\to0}\frac{\delta}{\epsilon}\in(0,+\infty)$. In this case, there exists a constant $C_l>0$ such that
\begin{equation*}
\mathop{\lim\sup}_{\epsilon\to0} G_\epsilon(\bar{v}_\epsilon)\leq (1+C_l)\mathcal{H}^{d-1}(J_u)\,.
\end{equation*}
In particular, this permits to control from above the $\Gamma$-$\limsup$ of $E\Dir\ltms$ through a Griffith-type functional.
\end{oss}

\section{The non-interpenetration constraint} \label{sec:nonimp}
This section contains the proof of the $\Gamma$-convergence approximation in Theorem~\ref{thm:noninter}. The lower inequality relies on the results proven in Section~4. For the upper inequality we employ a density result for couples $(u,v)$, here recalled in Lemma~\ref{le:CCF18}, which has been shown in dimension 2 in \cite{CCF18ARMA} to prove the upper bound in a continuum approximation for the Griffith energy with a linearized non-interpenetration constraint. We give first the proof of Theorem~\ref{thm:noninter}, keeping in the last part of the section the auxiliary results. 
\begin{proof}[Proof of Theorem~\ref{thm:noninter}] 
As a preparation for $(i)$ and $(ii)$ we notice that, since $v \leq 1$, then $F_\varepsilon^{\mathrm{div}}\leq F_\varepsilon^{\mathrm{div},\mathrm{NI}}$ (see \eqref{energiesF2} and \eqref{energiesF2m}), and $E^{\lambda, \theta}_\epsilon\leq (E\NI_{\lambda, \theta})_\varepsilon$.
\medskip\\
\noindent
\textbf{Proof of (ii).} Consider $(u_\varepsilon, v_\varepsilon)_\varepsilon$  with $\sup_{\varepsilon}(E\NI_{\lambda, \theta})_\varepsilon(u_\varepsilon, v_\varepsilon)<+\infty$. In particular, from the previous observations, \eqref{equiboundcomp} holds true. Then by Proposition~\ref{prop:compactness} we have that $u_\varepsilon$ has the same pointwise limit of a suitable function $\bar{u}_\varepsilon$, that satisfies \eqref{compactbound} and $\|\bar{u}_\varepsilon\|_{L^\infty} \leq M$. Therefore (cf.\ \cite{BCDM}) $(u_\varepsilon)_\varepsilon$ converges in every $L^p(\Omega;\Rd)$, $p \in [1, \infty)$, to some $u \in SBD^2(\Omega)$ with $\|u\|_{L^\infty} \leq M$.
\medskip\\
\noindent
\textbf{Proof of (i).}  We argue for $d=3$, the case $d=2$ being analogous.
Fix $(u_\varepsilon, v_\varepsilon)_\varepsilon$ such that $\sup_{\varepsilon}(E\NI_{\lambda, \theta})_\varepsilon(u_\varepsilon, v_\varepsilon)<+\infty$ and $u_\varepsilon\to u$, $u \in SBD^2(\Omega)$. By $(ii)$, $\|u\|_{L^\infty} \leq M$ and $u_\varepsilon \to u$  in every $L^p(\Omega;\Rd)$. In particular, $\mathrm{E} u$ is a measure, with
\begin{equation}\label{Eumeasure}
\mathrm{E} u= \mathcal{E} u \, \mathcal{L}^d + ([u]\odot \nu_u) \hd \res J_u\,. 
\end{equation}  Let us show that $u$ satisfies $\mathrm{Div}^-u=\mathrm{Tr}^-(\mathrm{E}u)\in L^2(\Omega)$.
In fact, let us examine the proof of Proposition~\ref{prop:lowerbound}, with now the control on $F_\varepsilon^{\mathrm{div},\mathrm{NI}}(u_\varepsilon, v_\varepsilon)$ at hand, which improves that on $F_\varepsilon^{\mathrm{div}}(u_\varepsilon, v_\varepsilon)$. (The only difference is that in Section~4 we worked with $\wt\Omega$, now with $\Omega$; anyway, one could as well in this case obtain  the lower limit inequality imposing a Dirichlet datum).
Arguing as in that proof, we introduce the functions $z_\varepsilon$ as in \eqref{interp1}. Then $z_\varepsilon \to u$ in every $L^p(\Omega;\Rd)$, since they converge in measure to $u$ and $\|z_\varepsilon\|_{L^\infty}\leq M$. Moreover, taking the negative part of the scalar functions in \eqref{gradinterp}, we obtain that
\begin{equation}\label{0712192122}
{\rm div}^-z_\epsilon(x)= \frac{1}{\delta^2}({\rm div}_\delta^-)^{k_1e_1,k_2e_2,k_3e_3} u_\epsilon(\alpha) \quad\mbox{ if } x\in\delta \mathcal{Q}^{k_1e_1,k_2e_2,k_3e_3}(\alpha)
\end{equation}
for every $(k_1,k_2,k_3)\in \{-1,1\}^3$, where we recall the definition of 
$
\delta \mathcal{Q}^{k_1e_1,k_2e_2,k_3e_3}(\alpha)$ in \eqref{0712192124}.
Then, arguing as for \eqref{formule4}, we get that
\begin{equation}\label{0712192129}
\int_{\alpha+\delta Q_{2}}({\rm div}^-z_\epsilon(x))^2\,\mathrm{d}x=
\delta(v_\epsilon(\alpha))^2|{\rm Div}^-_{\delta}\,u_\epsilon(\alpha)|^2.
\end{equation}
Summing over $\alpha$ and recalling the control on $F_\varepsilon^{\mathrm{div},\mathrm{NI}}(u_\varepsilon, v_\varepsilon)$, \eqref{energiesF2m}, and \eqref{range}, we infer that 
if $\varepsilon>0$ is small enough then
\begin{equation*}
\|\mathrm{div}^- z_\varepsilon \|_{L^2(\wt \Omega_\eta)} \leq C\,,
\end{equation*} 
for $C>0$ depending on $M$ and $\theta$.
In view of the $L^1$ convergence of $z_\varepsilon$ to $u$, we have that $\mathrm{div}\,z_\varepsilon$ converges in the sense of distributions on $\Omega$ to $\mathrm{Div}\, u=\mathrm{Tr}(\mathrm{E} u)$. Then, arguing as in e.g.\ \cite[Proposition~1.62]{AFP}, we can see that $\mathrm{div}^- z_\varepsilon$ converges weakly in $L^2(\Omega_\eta)$ to a suitable non negative function $f$, with $f \geq \mathrm{Div}^- u$. Then the positive measure $\mathrm{Div}^- u=\mathrm{Tr}^-(\mathrm{E}u)$ is indeed in $L^2(\Omega)$.

Now, computing the negative part of the trace of the identity \eqref{Eumeasure}, we obtain the non-interpenetra\-tion condition $[u]\cdot \nu \geq 0$ $\hd$-a.e.\ on $J_u$, since $\mathrm{Div}^- u$ has no singular part.
We deduce that ${\mathcal{G}}_{\lambda,\theta}\NIM(u,1)={\mathcal{G}}_{\lambda,\theta}(u,1; \Omega) < +\infty$. Then, by Theorem~\ref{teo:main} and $E^{\lambda, \theta}_\epsilon\leq (E\NI_{\lambda, \theta})_\varepsilon$ we conclude (i). 
\medskip\\
\noindent
\textbf{Proof of (iii).} Let $u\etl$, $v\etl$ the functions provided by Lemma~\ref{le:CCF18}, in correspondence to families of $\varepsilon$, $\beta$, $l \in (0,1)$. First we show that for every  $l$, $\beta$
\begin{equation}\label{0812191742}
\begin{split}
& \limsup_{\varepsilon\to 0}
\Bigg[\sum_{\xi\in S_2}\sigma_{|\xi|}\int_{ \Omega_\delta^\xi} \frac{\lambda}{2} \frac{(v\etl)^2}{\delta^2}   \left|D_{\delta,\xi}{u\etl}\right|^2 \,\mathrm{d}x + \frac{\theta}{4}\int_{\Omega_\delta^{\rm div}} \frac{(v\etl)^2}{\delta^2}  \left|{\rm Div}^+_{\delta}{u\etl}\right|^2\,\mathrm{d}x \\& \hspace{3em}+ \frac{\theta}{4}\int_{\Omega_\delta^{\rm div}}\frac{1}{\delta^2}\left|{\rm Div}^-_{\delta}{u\etl}\right|^2\,\mathrm{d}x  \Bigg]
\leq  \\& \limsup_{\varepsilon\to 0} \Bigg[ \int_{\Omega}(v\etl)^2 \bigg(\lambda |\mathcal{E}(u\etl)|^2 + \Big(\frac{\lambda}{2} + \beta\Big) |\mathrm{div}^+\,u\etl|^2 \bigg) \dx \\& \hspace{3em} + \Big(\frac{\lambda}{2} + \beta\Big) \int_\Omega |\mathrm{div}^-\,u\etl|^2 \dx   \Bigg].
\end{split}
\end{equation}
Indeed, in view of the assumption $\lim_{\varepsilon \to 0} \frac{\delta}{\varepsilon^2}=0$ and of Remark~\ref{rem:0812191227}, we have that
\begin{equation*}
\limsup_{\varepsilon\to 0}\frac{1}{\delta}\int_0^\delta \left| \langle \mathcal{E} u\etl(x+s\xi) \xi, \xi \rangle - \langle \mathcal{E} u\etl(x) \xi, \xi \rangle \right|^2 \mathrm{d}s =0
\end{equation*}
uniformly with respect to $\xi \in \R^2 \sm\{0\}$, $\beta$, $l$, and $x$ with $\dist(x,\partial \Omega)> \delta|\xi|$.
Therefore
\begin{equation*}
\begin{split}
\limsup_{\varepsilon\to 0}\frac{1}{\delta^2}\int_{ \Omega_\delta^\xi}(v\etl)^2|D_\delta^\zeta u\etl(x)|^2\,\mathrm{d}x & =\limsup_{\varepsilon\to 0}\int_{ \Omega_\delta^\xi}\left|\frac{v\etl}{\delta|\zeta|^2}\int_0^\delta \langle \mathcal{E}u(x+t\zeta)\zeta,\zeta\rangle\,\mathrm{d}t\right|^2\mathrm{d}x\\
&=\limsup_{\varepsilon\to 0} \frac{1}{|\zeta|^4}\int_{ \Omega_\delta^\xi}(v\etl)^2|\langle \mathcal{E}u\etl(x)\zeta,\zeta\rangle|^2\,\mathrm{d}x
\end{split}
\end{equation*}
for every $\zeta\in\{\pm\xi\}$. Similarly, one estimates
\begin{equation*}
\begin{split}
&\limsup_{\varepsilon \to 0} \int_{\Omega_\delta^{\rm div}}\frac{1}{\delta^2}  |(\mathrm{div}_\delta^-)^{k_1e_1, k_2e_2} u\etl|^2 \dx 
\\&
= \limsup_{\varepsilon\to 0} \int_{\Omega_\delta^{\rm div}} \Bigg( \frac{1}{\delta}\int_0^\delta \langle \mathcal{E}u\etl(x+t(k_1e_1)) e_1, e_1\rangle + \langle \mathcal{E}u\etl(x+t(k_2e_2)) e_2, e_2\rangle \mathrm{d}t \Bigg)^2  \dx  
\\&= \limsup_{\varepsilon\to 0} \int_{\Omega_\delta^{\rm div}} \Big(  \langle \mathcal{E}u\etl(x) e_1, e_1\rangle + \langle \mathcal{E}u\etl(x) e_2, e_2\rangle  \Big)^2 \dx = \limsup_{\varepsilon\to 0} \int_{\Omega_\delta^{\rm div}} (\mathrm{div}^-u\etl)^2\dx
\end{split}
\end{equation*}
and then obtains \eqref{0812191742}.

From \eqref{0812191742} we pass to an estimate on $\lambda\sum_{\xi\in S_2}\sigma_{|\xi|}F_\epsilon^\xi(u\etl,v\etl)+\theta {F}^{\mathrm{div}, \mathrm{NI}}_\epsilon(u\etl,v\etl)$ by arguing as in the proof of Proposition~\ref{prop:upperbound}.  We thus consider  for every $y\in(0,1]^2$ the functions $T_y^\delta u\etl$, $T_y^\delta v\etl$  as defined in \eqref{transldiscr} for $d=2$ and $u= u\etl$, $v\etl$. By the definition of the operator $T_y^\delta$, arguing similarly to \eqref{0912191038} we deduce
\begin{equation}\label{0912191210}
\begin{split}
& \int_{(0,1)^2} \left(\lambda F_\epsilon(T_y^\delta u\etl,T_y^\delta v\etl)+\theta F_\epsilon^{\rm div, \mathrm{NI}}(T_y^\delta u\etl,T_y^\delta v\etl)\right)\,\mathrm{d}y\\
 &\leq \sum_{\xi\in S_2}\sigma_{|\xi|}\int_{ \Omega_\delta^\xi} \frac{\lambda}{2} \frac{(v\etl)^2}{\delta^2}   \left|D_{\delta,\xi}{u\etl}\right|^2 \,\mathrm{d}x + \frac{\theta}{4}\int_{\Omega_\delta^{\rm div}} \frac{(v\etl)^2}{\delta^2}  \left|{\rm Div}^+_{\delta}{u\etl}\right|^2\,\mathrm{d}x \\& \hspace{3em}+ \frac{\theta}{4}\int_{\Omega_\delta^{\rm div}}\frac{1}{\delta^2}\left|{\rm Div}^-_{\delta}{u\etl}\right|^2\,\mathrm{d}x\,.
\end{split}
\end{equation}
Following the very same argument as that to get \eqref{elasticuppxi}, applied to $(T_y^\delta u\etl,T_y^\delta v\etl)$ in place of $T_y^\delta u_\epsilon$, for $\eta>0$ fixed we infer that for every $l$, $\beta$, $\varepsilon$ there exists $z\etl \in (0,1)^2$ such that, setting $\bar{u}\etl:= T_{z\etl}^\delta u\etl$, $\bar{v}\etl:=  T_{z\etl}^\delta v\etl$, it holds that 
\begin{equation}\label{0912191215}
\lim_{\varepsilon\to 0}\|(\bar{u}\etl,\bar{u}\etl)-(u\etl, v\etl)\|_{L^1}=0
\end{equation}
 and 
\begin{equation}\label{0912191211}
\begin{split}
&\limsup_{\varepsilon\to 0} \left(\lambda F_\epsilon(\bar{u}\etl,\bar{v}\etl)+\theta F_\epsilon^{\rm div, \mathrm{NI}}(\bar{u}\etl,\bar{v}\etl)\right) \\& 
\leq \limsup_{\varepsilon \to 0} \int_{(0,1)^2} \left(\lambda F_\epsilon(T_y^\delta u\etl,T_y^\delta v\etl)+\theta F_\epsilon^{\rm div, \mathrm{NI}}(T_y^\delta u\etl,T_y^\delta v\etl)\right)\,\mathrm{d}y + \eta\,.
\end{split}
\end{equation}

As for the Modica-Mortola term, we first introduce a variant of $G_\varepsilon$ obtained by replacing $\alpha$ by $\alpha+\delta y$ in the expression of $G_\varepsilon$ \eqref{energiesG}, namely for every $v\colon \Omega\to \R$ measurable we set
\begin{equation*}
G^y_\epsilon(v):=\frac12\sum_{\alpha\in \Omega_\delta}\delta^2\Bigg(\frac{1}{\epsilon}(v(\alpha + \delta y)-1)^2 + \epsilon\sum_{\substack{k=1\\ \alpha+2 \delta e_k\in \Omega}}^2\left(\frac{v(\alpha+\delta (y+e_k))-v(\alpha+\delta y)}{\delta}\right)^2\Bigg).
\end{equation*}
Now we may argue exactly as done in \cite[Proposition~4.2, Step~2]{BBZ} and in the last part of the proof of Proposition~\ref{prop:upperbound}, 
with $\alpha+ \delta y$ in place of $\alpha$ (that is, the functions are evaluated in $\alpha+\delta y$ instead of $\alpha$, inside each cube $\alpha+\delta [0,1)^2$) and the role of $K$, $h_\varepsilon$ played now by $\Gamma^\beta$, $\gamma(\frac{\cdot}{\varepsilon})$ from (i) in Lemma~\ref{le:CCF18} (notice that we use the regularity of $\Gamma^\beta$ to control its discretized neighborhoods).
We then obtain that for every $y \in [0,1)^2$
\begin{equation}\label{0912191152}
\limsup_{\varepsilon\to 0} G^y_\epsilon(v\etl) \leq \limsup_{\varepsilon \to 0}  \frac12 \int_\Omega\bigg( \frac{(v\etl-1)^2}{\varepsilon} + \varepsilon |\nabla v\etl|^2 \bigg) \dx \,.
\end{equation}
Moreover, notice that for every $y \in [0,1)^2$
\begin{equation}\label{0912191209}
\limsup_{\varepsilon\to 0} G^y_\epsilon(v\etl)=\limsup_{\varepsilon\to 0} G_\epsilon(T_\delta^y v\etl)
\end{equation}

Let us choose $l$, $\beta$, $\eta$ in dependence on $\varepsilon$, vanishing as $\varepsilon\to 0$, and denote by $\bar{u}_\varepsilon$, $\bar{v}_\varepsilon$ the corresponding $\bar{u}\etl$, $\bar{v}\etl$ (before we omit the further dependence on $\eta$). By collecting \eqref{0912191210}, \eqref{0912191211},   \eqref{0912191152}, \eqref{0912191209} and (iv) in Lemma~\ref{le:CCF18}, we eventually deduce that
\begin{equation*}
\limsup_{\varepsilon \to 0} (E\NIM_{\lambda, \theta})_\epsilon(\bar{u}_\varepsilon, \bar{v}_\varepsilon) \leq {\mathcal{G}}_{\lambda,\theta}\NIM(u,1)\,,
\end{equation*}
and \eqref{0912191215} with (ii) in Lemma~\ref{le:CCF18} give that $\bar{u}_\varepsilon$, $\bar{v}_\varepsilon$ converge to $u$ and 1.
This concludes the proof of (iii).
\end{proof}

We recall the following result, which is a direct outcome of \cite{CCF18ARMA}.
\begin{lem}\label{le:CCF18}
Let $d=2$ and $u\in SBD^2(\Omega) \cap L^\infty(\Omega;\Rd)$ with $\mathrm{Div}^-u=\mathrm{Tr}^-(\mathrm{E}u) \in L^2(\Omega)$. Then, for every families of parameters $\varepsilon$, $\beta$, $l \in (0,1)$ there exist functions $v\etl \in C^\infty(\Omega; [0,1])$, $u\etl \in C^\infty(\Omega;\Rd)$ such that
\begin{itemize}
\item[(i)] for every $\beta>0$ there exists a set $\Gamma^\beta$, which is a finite union of $C^1$ hypersurfaces and of at most $C\beta/(\varepsilon l)$ isolated points (for $C>0$ a universal constant), such that $\mathcal{H}^1(J_u \triangle \Gamma^\beta) < \beta^2$ and $v\etl$ has the form
\begin{equation*}
v\etl=\gamma\Bigg( \frac{(\dist(x, \Gamma) - 16 \sqrt{2} \varepsilon l )^+ }{\varepsilon} \Bigg)
\end{equation*}
where $\gamma$ is a smooth scalar function with $\gamma(t)\in [0,1]$, $\gamma(0)=0$, $\lim_{t\to +\infty}\gamma(t)=1$. In particular,
for every $\beta$, $l$, it holds $v\etl \to 1$ in $L^2(\Omega)$ as $\varepsilon\to 0$;
\item[(ii)] for every $l$, it holds $\limsup_{\beta\to 0} \limsup_{\varepsilon\to 0}\|u\etl - u\|_{L^2}=0$;
\item[(iii)] $u\etl=\varphi_{\varepsilon l} \ast \widetilde{u}\etl$, for a suitable $\widetilde{u}\etl$ in $L^\infty(\Omega;\Rd)$ with $\|\widetilde{u}\etl\|_{L^\infty}\leq \|u\|_{L^\infty}$ and $\varphi_{\varepsilon l}=(\varepsilon l)^{-2}\varphi(\frac{\cdot}{\varepsilon l})$ for $\varphi\in C_c^\infty(B_{1/2})$ with $\int \varphi \dx=1$ a given radially symmetric mollifier;
\item[(iv)] it holds that \begin{equation*}
\begin{split}
\limsup_{l\to 0} &\limsup_{\theta\to 0} \limsup_{\varepsilon\to 0} \Bigg[ \int_{\Omega}(v\etl)^2 \bigg(\lambda |\mathcal{E}(u\etl)|^2 + \Big(\frac{\lambda}{2} + \beta\Big) |\mathrm{div}^+\,u\etl|^2 \bigg) \dx \\& + \Big(\frac{\lambda}{2} + \beta\Big) \int_\Omega |\mathrm{div}^-\,u\etl|^2 \dx +  \frac12 \int_\Omega\Big( \frac{(v\etl-1)^2}{\varepsilon} + \varepsilon |\nabla v\etl|^2 \Big) \dx  \Bigg]  \leq {\mathcal{G}}^{\lambda,\theta}(u)\,.
\end{split}
\end{equation*}
\end{itemize}
\end{lem}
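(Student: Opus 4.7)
The statement is essentially a transcription of the construction developed in \cite{CCF18ARMA} for the recovery sequence in the continuum Ambrosio--Tortorelli approximation of the Griffith energy with linearized non-interpenetration. My plan is to extract and package their construction into the four items listed, rather than to re-derive the energy estimates from scratch.

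First, starting from $u\in SBD^2(\Omega)\cap L^\infty$ with $\mathrm{Div}^- u\in L^2(\Omega)$, I would invoke an $SBD^2$--density result (such as Theorem~\ref{thm:density}, which is available in the planar case) to approximate $u$, for each $\beta\in(0,1)$, by a function $u^\beta \in SBV^2(\Omega;\R^2)\cap L^\infty$ whose jump set $\Gamma^\beta$ is contained in a finite union of $C^1$--arcs (plus at most $C\beta/(\varepsilon l)$ isolated endpoints arising from the truncation/regularization procedure used to compactify each $C^1$--piece), with $\mathcal{H}^1(J_u\triangle \Gamma^\beta)<\beta^2$, $u^\beta\to u$ in $L^2$ as $\beta\to 0$, and $\|u^\beta\|_{L^\infty}\leq \|u\|_{L^\infty}$. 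The non-interpenetration condition transfers to $u^\beta$, so $\mathrm{Div}^- u^\beta$ is uniformly bounded in $L^2$.

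Second, I would define $v\etl$ by the explicit formula of item~(i), using the one-dimensional optimal profile $\gamma$ for the Ambrosio--Tortorelli functional (so that $\int_0^{+\infty}(\gamma-1)^2+(\gamma')^2\,\mathrm{d}t=1$). The buffer of size $16\sqrt{2}\,\varepsilon l$ ensures that $v\etl\equiv 0$ on a tubular neighborhood of $\Gamma^\beta$ of width strictly larger than the mollification scale $\varepsilon l$. By standard computations (cf.\ the proof of Proposition~\ref{prop:upperbound}), $\int\bigl((v\etl-1)^2/\varepsilon+\varepsilon|\nabla v\etl|^2\bigr)\,\mathrm{d}x \to \mathcal{H}^1(\Gamma^\beta)$ as $\varepsilon\to 0$, which, combined with $\mathcal{H}^1(J_u\triangle\Gamma^\beta)<\beta^2$, yields the correct surface contribution in (iv).

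Third, I would set $\widetilde{u}\etl$ equal to $u^\beta$ in the bulk, with the slight modification from \cite{CCF18ARMA} near the isolated endpoints to preserve $L^\infty$ bounds and non-interpenetration, and then define $u\etl:=\varphi_{\varepsilon l}\ast\widetilde{u}\etl$; this gives (iii) and, by standard mollification arguments combined with $u^\beta\to u$, gives (ii). Away from a $(\varepsilon l/2)$-neighborhood of $\Gamma^\beta$, $\mathcal{E}u\etl\to \mathcal{E}u$ strongly, producing the correct elastic contribution. Near $\Gamma^\beta$, the measure $\mathrm{Div}\,\widetilde{u}\etl$ has a singular part $([u^\beta]\cdot\nu)\,\mathcal{H}^1\llcorner\Gamma^\beta\geq 0$, so after mollification $\mathrm{div}^+ u\etl$ may concentrate there, but it is weighted by $(v\etl)^2\equiv 0$; conversely $\mathrm{div}^- u\etl$ comes only from $\varphi_{\varepsilon l}\ast \mathrm{Tr}^-(\mathrm{E}u^\beta)$, which is uniformly $L^2$-bounded.

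The main obstacle is exactly item~(iv): one must simultaneously show that the weighted term $\int(v\etl)^2(\lambda|\mathcal{E}u\etl|^2+(\lambda/2+\beta)|\mathrm{div}^+ u\etl|^2)\,\mathrm{d}x$ converges, in the iterated limit $\varepsilon\to 0$, $\beta\to 0$, $l\to 0$, to the bulk part $\lambda\int|\mathcal{E}u|^2+(\lambda/2)\int|\mathrm{div}\,u|^2$, and that the unweighted term $\int|\mathrm{div}^- u\etl|^2\,\mathrm{d}x$ is controlled by $(\lambda/2)\int|\mathrm{div}\,u|^2+o(1)$. The first requires the buffer $16\sqrt{2}\,\varepsilon l$ to strictly dominate $\varepsilon l$ (the convolution radius), so that the singular positive divergence is entirely masked by $(v\etl)^2=0$; the second uses that, because $[u]\cdot\nu\geq 0$, the negative part of $\mathrm{Div}\,u$ has no singular contribution and is represented by $\mathrm{Tr}^-(\mathrm{E}u)\in L^2$, whose mollifications converge strongly to $\mathrm{div}^- u$ in $L^2$. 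These are precisely the estimates carried out in \cite{CCF18ARMA}, so the only real work is to verify that the parameters in their construction match those in our statement and that the iterated limit in (iv) closes correctly.
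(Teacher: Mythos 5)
Your overall route coincides with the paper's: Lemma~\ref{le:CCF18} is not reproved but packaged out of the recovery-sequence construction of \cite[Subsections~3.1--3.2]{CCF18ARMA}, with (i)--(iii) read off from the definitions of $v_\varepsilon^l$ and $u_\varepsilon$ there and (iv) from their energy estimates (their (18), (27), (36)--(37)). To that extent your proposal is fine, and your heuristics for (iv) — the buffer $16\sqrt{2}\,\varepsilon l$ making $v\etl\equiv 0$ on a neighbourhood strictly larger than the mollification radius so that the nonnegative singular part of $\mathrm{Div}$ is masked, while $\mathrm{div}^- u\etl$ is controlled by the mollification of $\mathrm{Tr}^-(\mathrm{E}u)\in L^2$ — are the right ones.

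However, the reconstruction you sketch as ``the real work'' contains a step that would fail if carried out as stated. You propose to produce $\Gamma^\beta$ and an approximant $u^\beta$ by invoking a general $SBD^2$/$GSBD^2$ density theorem (e.g.\ Theorem~\ref{thm:density} or Cortesani--Toader) and then assert that ``the non-interpenetration condition transfers to $u^\beta$''. It does not: those density results give no control on the sign of $[u^\beta]\cdot\nu$ nor on $\mathrm{Div}^-u^\beta$, and preserving exactly these features is the whole difficulty that \cite{CCF18ARMA} resolves with a bespoke, $\varepsilon$- and $l$-dependent construction (a covering of $J_u$ splitting it into a part covered by finitely many $C^1$ curves and a residual part covered by balls of radius comparable to $\varepsilon l$, together with a modification $u_I$ of $u$ near the jump). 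This is also why $\Gamma^\beta$ contains up to $C\beta/(\varepsilon l)$ isolated points: their number diverges as $\varepsilon\to 0$, so they cannot arise, as you suggest, from an $\varepsilon$-independent ``truncation/regularization of each $C^1$ piece'' performed once for each $\beta$. If you intend the lemma as a citation, say so and point to the precise places in \cite{CCF18ARMA} (as the paper does); if you intend to re-derive it, you must replace the density-theorem step by the constraint-preserving covering/modification argument, since otherwise the masking of $\mathrm{div}^+u\etl$ by $(v\etl)^2$ and the $L^2$ bound on $\mathrm{div}^-u\etl$ cannot be justified for the approximant you start from.
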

\begin{proof}
Properties (i), (ii), (iii) are clear from the construction for the $\limsup$ inequality for \cite[Theorem~1]{CCF18ARMA},  in \cite[Subsections~3.1 and 3.2]{CCF18ARMA}. In particular, for (i) see (with the numeration in \cite{CCF18ARMA}) the definition of $v_\varepsilon^l$ at the beginning of Subsection~3.1 and (17), for (ii) the very last sentence of Section~3, and for (iii) the definition of $u_\varepsilon$ below (24), where $u$ has to be replaced by $u_I$, as explained below (34). 

As for (iv), this is a consequence of (18) for the Modica-Mortola part in $v$ (with a minor modification since the Modica-Mortola term here is slightly different), of (27), that states that $\mathcal{E}u\etl$ is a good approximation of $\mathcal{E}u$ where $v\etl \neq 0$ (then one can treat separately $\mathcal{E}u\etl$ and $\mathrm{div}^+u\etl$, as we did), and of (36)-(37) for the treatment of $\mathrm{div}^-u\etl$.
\end{proof}

\begin{oss}\label{rem:0812191227}
From (iii) it follows that 
\begin{equation*}
\|\nabla u\etl\|_{W^{1,\infty}(\Omega_{\varepsilon l})} \leq C (\varepsilon l)^{-2}
\end{equation*} 
for every fixed $\beta$, $l$, $\varepsilon$, $C$ depending only on $\|u\|_{L^\infty}$ and $\varphi$, and $\Omega_{\varepsilon l}:=\{x \in \Omega\colon \dist(x, \dom)>\varepsilon l\}$. In fact, for $x$, $y \in \Omega_{\varepsilon l}$
\begin{equation*}
\partial_i u\etl(x+y) -\partial_i u\etl(x)=\int_{\R^2} \big(\partial_i \varphi_{\varepsilon l}(x+y-z) - \partial_i \varphi_{\varepsilon l}(x-z)\big) u(z) \, \mathrm{d}z\,.
\end{equation*}
We deduce the claim by noticing that $\|\nabla \varphi_{\varepsilon l}\|_{W^{1,\infty}}\leq (\varepsilon l)^{-4}\|\nabla \varphi\|_{W^{1,\infty}}$ and that the above integral is indeed computed on the set $B_{\varepsilon l /2}(x+y) \cup B_{\varepsilon l/2}(x)$, with area $C(\varepsilon l)^2$.
\end{oss}

\section*{Acknowledgements}
V. Crismale has been supported by the Marie Sk\l{}odowska-Curie Standard European Fellowship No 793018, project \emph{BriCoFra}.

G. Scilla and F. Solombrino have been supported by the Italian Ministry of Education, University and Research through the Project “Variational methods for stationary and evolution problems with singularities and interfaces” (PRIN 2017).

\bibliographystyle{siam}

\bibliography{references}

\Addresses

\end{document}